\newcommand{\non}{\nonumber}
\def\eqdefa{\buildrel\hbox{\footnotesize def}\over =}
\newcommand{\Rmnum}[1]{\expandafter\@slowromancap\romannumeral #1@}
\newtheorem{athm}{\bf \t}[section]
\newenvironment{thm} [1] {\def\t{#1}\begin{athm} \bf \rm} {\end{athm}}
\newcommand{\bthm}{\begin{thm}}
\newcommand{\ethm}{\end{thm}}
\newtheorem{theorem}{Theorem}[section]
\newtheorem{lemma}{Lemma}[section]
\newtheorem{proposition}{Proposition}[section]
\newcommand{\beq}{\begin{equation}}
\newcommand{\eeq}{\end{equation}}
\newcommand{\ben}{\begin{eqnarray}}
\newcommand{\een}{\end{eqnarray}}
\newcommand{\beno}{\begin{eqnarray*}}
\newcommand{\eeno}{\end{eqnarray*}}
\newcommand{\bali}{\begin{aligned}}
\newcommand{\eali}{\end{aligned}}
\numberwithin{equation}{section}
\newcommand{\al}{\alpha}
\newcommand{\ve}{\varepsilon}
\newcommand{\f}{\frac}
\newcommand{\na}{\nabla}
\newcommand{\ud}{\mathrm{d}}
\newcommand{\vv}{\mathbf{v}}
\newcommand{\xx}{\mathbf{x}}
\newcommand{\nn}{\mathbf{n}}
\newcommand{\hh}{\mathbf{h}}
\newcommand{\mm}{\mathbf{m}}
\newcommand{\BB}{\mathbf{B}}
\newcommand{\GG}{\mathbf{G}}
\newcommand{\HH}{\mathbf{H}}
\newcommand{\WW}{\mathbf{W}}
\newcommand{\NN}{\mathbf{N}}
\newcommand{\SSS}{\mathbf{S}}
\newcommand{\DD}{\mathbf{D}}
\newcommand{\FF}{\mathbf{F}}
\newcommand{\II}{\mathbf{I}}
\newcommand{\CN}{\mathcal{N}}
\newcommand{\CF}{\mathcal{F}}
\newcommand{\CG}{\mathcal{G}}
\newcommand{\Ff}{\mathfrak{F}}
\newcommand{\Ef}{\mathfrak{E}}
\newcommand{\MH}{\mathscr{H}}
\newcommand{\ME}{\mathscr{E}}
\newcommand{\ML}{\mathscr{L}}
\newcommand{\MA}{\mathscr{A}}
\newcommand{\MB}{\mathscr{B}}
\newcommand{\MC}{\mathscr{C}}
\newcommand{\MP}{\mathscr{P}}
\newcommand{\MU}{\mathscr{U}}
\newcommand{\MD}{\mathscr{D}}
\newcommand{\MT}{\mathscr{T}}
\newcommand{\BS}{{\mathbb{S}^2}}
\newcommand{\BR}{{\mathbb{R}^3}}
\newcommand{\BOm}{\mathbf{\Omega}}
\newcommand{\pa}{\partial}
\begin{document}

\title[The uniaxial limit of Qian-Sheng¡¯s inertial $Q$-tensor theory]
{Rigorous justification of the uniaxial limit from Qian-Sheng¡¯s inertial $Q$-tensor theory to
the Ericksen-Leslie theory}

\author{Sirui Li}
\address{School of  Mathematics and Statistics, Guizhou University, Guiyang 550025, China}
\email{srli@gzu.edu.cn}

\author{Wei Wang}
\address{School of  Mathematical Sciences, Zhejiang University, Hangzhou 310027, China}
\email{wangw07@zju.edu.cn}

\maketitle
\begin{abstract}
In this paper, we rigorously justify the connection between Qian-Sheng's inertial $Q$-tensor model and the full Ericksen-Leslie model for the liquid crystal flow.
By using the Hilbert expansion method, we prove that, when the elastic coefficients tend to zero(also called the uniaxial limit), the solution to the Qian-Sheng's inertial model
will converge to the solution to the full inertial Ericksen-Leslie system.
\end{abstract}

\tableofcontents

\section{Introduction}
Liquid crystals are a state of matter with physical properties between liquid and solid, in which molecules tend to align a preferred direction. In nematic liquid phase, the molecules exhibit long-range orientational order but no positional order.
In physics, different order parameters are introduced to characterize the anisotropic behavior of liquid crystals, which lead to different models.
There are three kinds of widely accepted theories to model nematic liquid crystal flows: {\it the Ericksen-Leslie theory}, {\it the Landau-de Gennes theory} and {\it the Doi-Onsager theory}.
The first two are macroscopic theories which based on continuum mechanics, while the latter one are microscopic kinetic theory derived from the viewpoint of statistical mechanics.
As they are derived from different considerations and are widely used in liquid crystal studies,  to explore the connection between different theories is an important problem. In this paper, we aim to study the
rigorous connection between the Ericksen-Leslie model and the Qian-Sheng model--a representative model in the Landau-de Gennes framework.

Before introducing the Ericksen-Leslie model and the Qian-Sheng model, we list some notations and conventions.
Throughout this papet, the Einstein summation convention is utilized. The space of symmetric traceless tensors is defined as
\begin{align*}
\mathbb{S}^3_0\eqdefa\big\{Q\in \mathbb{R}^{3\times3}:~Q_{ij}=Q_{ji},~Q_{ii}=0\big\},
\end{align*}
which is endowed with the inner product
$Q_1:Q_2=Q_{1ij}Q_{2ij}.$
The set $\mathbb{S}^3_0$ is a five-dimensional subspace of $\mathbb{R}^{3\times3}.$
The matrix norm on $\mathbb{S}^3_0$ is defined as
$|Q|\eqdefa\sqrt{{\rm Tr} Q^2}=\sqrt{Q_{ij}Q_{ij}}$.
For two tensors $A,B\in\mathbb{S}^3_0$ we denote $(A\cdot B)_{ij}=A_{ik}B_{kj}$
and $A:B=A_{ij}B_{ij}$, and their commutator $[A,B]=A\cdot B-B\cdot A$.
For any $Q_1, Q_2\in L^2(\BR)^{3\times3}$, the corresponding inner product is defined by
\begin{align*}
\langle Q_1,Q_2\rangle\eqdefa\int_{\BR}Q_{1ij}(\xx):Q_{2ij}(\xx)\ud\xx.
\end{align*}
We denote by $\nn_1\otimes\nn_2$ the tensor product of two vectors $\nn_1$ and $\nn_2$,
and omit the symbol $\otimes$ for simplicity.
We use $f_{,i}$ to denote $\partial_if$ and $\II$ to denote the $3\times3$ identity tensor. In addition, the superscripted dot denotes the material derivative, i.e., $\dot{f}=(\partial_t+\vv\cdot\nabla)f$, where the fluid velocity $\vv$ can be understood from the context.

\subsection{Ericksen-Leslie theory}
The hydrodynamic theory of nematic liquid crystals was initiated in the seminal work of Ericksen \cite{E-61} and Leslie \cite{Les} in the 1960's. In this theory, the local state of molecular alignments is described by a unit vector $\nn\in\mathbb{S}^2$, called the director.
The corresponding total free energy, called the Oseen-Frank energy, is given by
\begin{align}\label{energy-OF}
E_F(\nn,\nabla\nn)=&\f {k_1} 2(\na\cdot\nn)^2+\f {k_2} 2(\nn{\cdot}(\na\times\nn))^2
+\f {k_3} 2|\nn{\times}(\na\times \nn)|^2\nonumber\\
&+\frac{k_2+k_4}2\big(\textrm{tr}(\na\nn)^2-(\na\cdot\nn)^2\big),
\end{align}
where $k_1, \ldots, k_4$ are constants depending on the material and the temperature.

The full inertial Ericksen-Leslie system can be given as follows:
\begin{align}
&\vv_t+\vv\cdot\nabla\vv=-\nabla{p}+\nabla\cdot\sigma,\label{eq:EL-v}\\
&\na\cdot\vv=0,\label{eq:incompre}\\
&\nn\times\big(I\ddot{\nn}-\hh+\gamma_1\NN+\gamma_2\DD\cdot\nn\big)=0,\label{eq:EL-n}
\end{align}
where $\vv$ is the fluid velocity, $p$ is the pressure penalizing the incompressible condition (\ref{eq:incompre}) of $\vv$, and $I$ is {\it the moment of inertial density} usually considered as a small parameter.
The inertial term $\ddot{\nn}$ is the material derivative of $\dot{\nn}$. Equations (\ref{eq:EL-v}) and (\ref{eq:EL-n})
reflect the conservation laws of linear momentum and angular momentum, respectively.
The stress tensor $\sigma$ consists of the viscous (Leslie) stress $\sigma^L$ and the elastic (Ericksen) stress $\sigma^E$, i.e., $\sigma=\sigma^L+\sigma^E$, which can be described by the following phenomenological constitutive relations:
\begin{align}
\sigma^L=&\alpha_1(\nn\nn:\DD)\nn\nn+\alpha_2\nn\NN+\alpha_3\NN\nn+\alpha_4\DD
+\alpha_5\nn\nn\cdot\DD+\alpha_6\DD\cdot\nn\nn, \label{eq:Leslie stress}\\
\sigma^E=&-\frac{\partial{E_F}}{\partial (\nabla\nn)}\cdot(\nabla\nn)^T,\label{eq:Ericksen}
\end{align}
where
\begin{align*}
\DD=\frac{1}{2}(\nabla\vv+(\nabla\vv)^T), \quad\BOm=\frac12(\nabla\vv-(\nabla\vv)^T),
\quad\NN=\dot{\nn}-\BOm\cdot\nn.
\end{align*}
Additionally, the molecular field $\hh$ is given by
\begin{align*}
\hh=-\frac{\delta{E_F}}{\delta{\nn}}=-\frac{\partial{E_F}}{\partial\nn}+
\nabla\cdot\frac{\partial{E_F}}{\partial(\nabla\nn)}.
\end{align*}

The six constants $\al_1, \cdots, \al_6$ in (\ref{eq:Leslie stress}) are called the Leslie viscosity coefficients. They and the coefficients $\gamma_1, \gamma_2$ together satisfy the following relations
\begin{eqnarray}
&\alpha_2+\alpha_3=\alpha_6-\alpha_5,\label{Leslie relation}\\
&\gamma_1=\alpha_3-\alpha_2,\quad \gamma_2=\alpha_6-\alpha_5.\label{Leslie-coeff}
\end{eqnarray}
The equality (\ref{Leslie relation}) is referred to as {\it Parodi's relation} derived from the Onsager reciprocal relation of irreversible thermodynamics.
The relations (\ref{Leslie relation})-(\ref{Leslie-coeff}) will guarantee that the full Ericksen-Leslie system (\ref{eq:EL-v})-(\ref{eq:EL-n}) fulfils the  energy dissipation law:
\begin{align}
\frac{\ud}{\ud{t}}\int_{\BR}&\Big(\frac{1}{2}|\vv|^2+\frac{I}{2}|\dot{\nn}|^2+E_F\Big)\ud\xx
=-\int_{\BR}\Big((\alpha_1+\frac{\gamma_2^2}{\gamma_1})(\DD:\nn\nn)^2
+\alpha_4|\DD|^2\qquad\nonumber\\
&\quad+\big(\alpha_5+\alpha_6-\frac{\gamma_2^2}{\gamma_1}\big)|\DD\cdot\nn|^2
+\frac{1}{\gamma_1}\big|\nn\times(\hh-I\ddot{\nn})\big|^2\Big)\ud\xx.\quad\label{EL_energy_law}
\end{align}

It is worth emphasizing that the inertial term $I$ in (\ref{eq:EL-n}) is responsible for the hyperbolic feature of the equation describing the molecular orientation. If the inertial term is neglected, then the system (\ref{eq:EL-v})-(\ref{eq:EL-n}) is immediately transformed into its non-inertial counterpart which is a parabolic-type system.

Concerning the non-inertial version of the Ericksen-Leslie theory, the well-posedness results can be refered to \cite{LL,LinW,WZZ2} and the references therein. In particular, under a natural physical condition on the Leslie coefficients, Wang-Zhang-Zhang \cite{WZZ2} proved the well-posedness of the system, and the global existence of weak solution in two-dimensional case was showed in \cite{HLW,WW}. Lin-Wang \cite{LinW2} proved the global existence of a weak solution for 3D case with the initial director field lying in the upper hemisphere. For more related works on the non-inertial Ericksen-Leslie system, for instance, see \cite{LinW,WXL,EKL} and the references therein.

On the other hand, there were also some analytical works devoting to the original inertial Ericksen-Leslie system. Very recently, Jiang-Luo \cite{JL1} established the well-posedness for the full inertial Ericksen-Leslie system in the context of classical solutions. Cai-Wang \cite{CW} studied the global well-posedness of classical solutions to the inertial Ericksen-Leslie model with positive $\gamma_1$.

\subsection{Landau-de Gennes theory}
Landau-de Gennes theory \cite{DG} is capable of providing a rather comprehensive description of the local behaviour of the medium, since it accounts for more complex phenomena of liquid crystals, such as line defects and biaxial configurations. This theory employs a symmetric and traceless tensorial order parameter $Q(\xx)$ to characterize the alignment behaviour of molecular orientations. Physically, $Q(\xx)$ could be understood as the second-order  traceless moment of $f$:
\begin{align*}
Q(\xx)=\int_\BS(\mm\mm-\frac{1}{3}\II)f(\xx,\mm)\ud\mm,
\end{align*}
where $f(\xx,\mm)$ represents the microscopic distribution of molecules with the orientation parallel to $\mm$ at material point $\xx$.
The tensor $Q(\xx)$ is said to be {\it isotropic} if all its eigenvalues are zero, {\it uniaxial} if it has two equal non-zero eigenvalues, and {\it biaxial} if its three  eigenvalues are distinct.

In the absence of boundary constraint and external field, the Landau-de Gennes free energy is given as follows:
\begin{align}
\mathcal{F}(Q,\nabla Q)
&=\int_{\BR}\Big\{-\frac{a}2\text{Tr}(Q^2)
-\frac{b}{3}\text{Tr}(Q^3)+\frac{c}{4}(\text{Tr}(Q^2))^2\nonumber\\
&\qquad+\frac{1}{2}\Big(L_1|\nabla Q|^2+L_2Q_{ij,j}Q_{ik,k}
+L_3Q_{ij,k}Q_{ik,j}\Big) \Big\}\ud\xx\nonumber\\
&\eqdefa\int_{\BR}\Big(f_b(Q)+f_e(\nabla Q)\Big)\ud\xx,\label{eq:Landau-energy}
\end{align}
where $a, b, c$ are non-negative parameters which may depend on the material and temperature, and $L_i(i=1,2,3)$
are material dependent elastic constants. $f_b$ is the bulk energy density describing the isotropic-nematic phase transition, while the elastic energy density $f_e$ penalizes spatial non-homogeneities.
The interested reader refers to \cite{DG,MN} for detailed introductions.

Up to now, some dynamic $Q$-tensor theories have been established to model nematic liquid crystal flows,
which are either derived from the molecular kinetic theory by closure approximations such as \cite{FLS, HLWZZ}
or directly obtained by a variational method such as Beris-Edwards model \cite{BE} and Qian-Sheng model \cite{QS}.
The well-posedness results of the Beris-Edwards system on whole space and bounded domain can be refered to \cite{PZ1,PZ2,HD} and \cite{ADL1, ADL2, LYW1}, respectively.
For the inertial Qian-Sheng model, De Anna and Zarnescu \cite{DZ} studied the local well-posdedness for bounded initial data and global well-posedness under the
assumptions of the small initial data. For the non-viscous version of the Qian-Sheng model, Feireisl et al. \cite{FRSZ}
proved a global existence of the dissipative solution which is inspired from that of incompressible Euler equations.

The Qian-Sheng model  \cite{QS} is a hydrodynamical model which reads as:
\begin{align}
J\ddot{Q}+\mu_1\dot{Q}&=\HH-\frac{\mu_2}{2}\DD+\mu_1[\BOm, Q],\label{eq:Q-general-intro1}\\
\frac{\pa{\vv}}{\pa{t}}+\vv\cdot\nabla\vv&=-\nabla{p}+\nabla\cdot\big(\sigma+\sigma^{d}\big),
\label{eq:Q-general-intro2}\\
\nabla\cdot\vv&=0,\label{eq:Q-general-intro3}
\end{align}
where $\dot{Q}=(\partial_t+\vv\cdot\nabla)Q$, $\ddot{Q}=(\partial_t+\vv\cdot\nabla)\dot{Q}$, the viscous stress $\sigma$, the distortion stress $\sigma^d$ and the molecular field $\HH$ are respectively given by
\begin{align} \label{vis-stressQ}\nonumber
 \sigma=&\beta_1 Q(Q:\DD)+\beta_4 \DD+\beta_5\DD\cdot Q+\beta_6Q\cdot\DD
+\beta_7(\DD\cdot Q^2+Q^2\cdot \DD)\\
&+\frac{\mu_2}{2}(\dot{Q}-[\BOm,Q])+\mu_1\big[Q,(\dot{Q}-[\BOm,Q])\big],\\ \nonumber
\sigma^d_{ij}=& -\frac{\partial\CF}{\partial Q_{kl,j}}\partial_iQ_{kl},\\ \label{vis-fieldQ}
\HH_{ij} =& -\Big(\frac{\delta{\mathcal{F}(Q,\nabla Q)}}{\delta Q}\Big)_{ij}=-\frac{\partial\CF}{\partial Q_{ij}}+\partial_k\Big(\frac{\partial\CF}{\partial Q_{ij,k}}\Big).
\end{align}
Moreover, in (\ref{eq:Q-general-intro1}), $J$ stands for the {\it inertial density} which is usually small.
 The viscosity coefficients $\beta_1, \beta_4, \beta_5, \beta_6, \beta_7, \mu_1$, and $\mu_2$ in (\ref{vis-stressQ})-(\ref{vis-fieldQ})  can be linked by the following relation:
\begin{align}
\beta_6-\beta_5=\mu_2.\label{Q-Parodi}
\end{align}

The system (\ref{eq:Q-general-intro1})-(\ref{eq:Q-general-intro3}) possesses an energy dissipation law, see (\ref{QS-dissip}) in Appendix.
Here we remark that, comparing with the original Qian-Sheng model in \cite{QS}, we add a new viscosity term $\beta_7(D_{ik}Q_{kl}Q_{lj}+Q_{ik}Q_{kl}D_{lj})$ in (\ref{vis-stressQ})
to ensure that  the energy of the system  will always dissipate without assuming any relation between $\beta_5$ and $\beta_6$.
Indeed, if $\beta_7=0$, we have to assume $\beta_5+\beta_6=0$, otherwise the energy may not dissipate, see Lemma \ref{dissip-rel-QS}.
However, the condition $\beta_5+\beta_6=0$ is so strong that it can not be satisfied by many liquid crystal materials.
Therefore, we introduce the $\beta_7$ term and assume that
\begin{align}\label{viscosity-cond}
\left\{
\begin{aligned}
&\beta_1,\beta_4,\mu_1>0,~\beta_4-\frac{\mu_2^2}{4\mu_1}>0, ~\beta_7\ge 0;\\
&(\beta_5+\beta_6)^2<8\beta_7\big(\beta_4-\frac{\mu_2^2}{4\mu_1}\big), ~\text{if}~ \beta_7\neq0; \beta_5+\beta_6=0, ~\text{if}~ \beta_7=0.\\
\end{aligned}
\right.
\end{align}
The detailed discussion of the dissipative relation is referred to Lemma \ref{dissip-rel-QS} in the Appendix.

\subsection{Motivations and main results}
The intricate connection between different dynamical theories for liquid crystals is not only of significance in mathematical literature,
but also directly related to many physical properties. The fundamental subject, generally involving the singular limit problem,
has drawn a lot of attention in physics and applied mathematics communities.
In this respect, the formal asymptotic expansions were first constructed by Kuzzu-Doi \cite{KD} to
derive the homogenous non-inertial Ericksen-Leslie system from the Doi-Onsager system and to determine the Leslie coefficients, under the small Deborah number limit.
E-Zhang \cite{EZ} extended Kuzzu-Doi's derivation and obtained the inhomogenous  non-inertial homogenous Ericksen-Leslie system. In particular, the Ericksen stress is derived from a body force.
Their formal derivation was rigorously justified by Wang-Zhang-Zhang \cite{WZZ1} under the small Deborah number limit.
Based on the same spirit, Li-Wang-Zhang \cite{LWZ} provided a strict derivation from the molecular-based $Q$-tensor system,
obtained from the molecular kinetic theory by the Bingham closure, to the non-inertial Ericksen-Leslie system.
Similar rigorous results were initiated by Wang et al. in \cite{WZZ3} concerning the Beris-Edwards system in Landau-de Gennes framework.
A unified formulation for liquid crystal modeling was put forward by Han et al. in \cite{HLWZZ} to establish relations between microscopic theories and  macroscopic theories.
There are also some interesting works which have explored the relations between different dynamical
theories for liquid crystals  in the framework of weak solutions, see \cite{LYW2}.

Recently, to better understand the limit of zero inertia for the full Ericksen-Leslie model, Jiang et al. \cite{JLTZ} first study a limit connecting
a scaled wave map with heat flow into the unit sphere $\BS$. Later on, Jiang et al. \cite{JL2,JLT1} investigate the zero inertial limit from the
full inertial Ericksen-Leslie model to the non-inertial one.

The main goal of this paper is to rigorously justify the connection between the inertial Qian-Sheng model and the full inertial Ericksen-Leslie model, in a sense of smooth solutions.

In contrast to the constants $a,b,c$, the elastic coefficients $L_i(i=1,2,3)$ in (\ref{eq:Landau-energy}) are usually regarded as being small,
so we consider the following rescaled energy functional with a small parameter $\ve$:
\begin{align}
\mathcal{F}_{\ve}(Q,\nabla Q)=&\int_{\BR}\Big(\frac{1}{\ve}f_b(Q)+f_e(\nabla Q)\Big)\ud\xx,\label{eq:Landau-energy-ve}
\end{align}
and $a, b, c, L_i(1\le i\le 3)\sim O(1)$.
We assume that the elastic coefficients $L_i$-s satisfy
\begin{align*}
L_1>0,~L_1+L_2+L_3>0,
\end{align*}
which will ensure that the elastic energy is strictly positive (see Lemma 2.5 in \cite{WZZ3}), i.e., there exists some constant $L_0=L_0(L_1,L_2,L_3)>0$ such that
\begin{align}\label{L:postive}
\int_{\BR}f_e(\nabla Q)\ud\xx\geq L_0\|\nabla Q\|_{L^2}.
\end{align}

Then the Qian-Sheng model with a small parameter $\ve$ can be written as:
\begin{align}
J\ddot{Q}^{\ve}+\mu_1\dot{Q}^{\ve}&=\HH^{\ve}-\frac{\mu_2}{2}\DD^{\ve}
+\mu_1[\BOm^{\ve}, Q^{\ve}],\label{eq:QS-general-ve1}\\
\frac{\pa{\vv^{\ve}}}{\pa{t}}+\vv^{\ve}\cdot\nabla\vv^{\ve}&=-\nabla{p^{\ve}}+\nabla\cdot\big(\sigma_{\ve}+\sigma^{d}_{\ve}\big),
\label{eq:QS-general-ve2}\\
\nabla\cdot\vv^{\ve}&=0,\label{eq:QS-general-ve3}
\end{align}
where $\dot{Q}^{\ve}=(\partial_t+\vv^{\ve}\cdot\nabla)Q^{\ve},~\ddot{Q}^{\ve}=(\partial_t+\vv^{\ve}\cdot\nabla)\dot{Q}^{\ve}$, and
\begin{align*}
\DD^{\ve}=&\frac12(\nabla\vv^{\ve}+(\nabla\vv^{\ve})^T),
~\BOm^{\ve}=\frac12(\nabla\vv^{\ve}-(\nabla\vv^{\ve})^T), \\
\sigma_{\ve}=&\beta_1 Q^{\ve}(Q^{\ve}:\DD^{\ve})+\beta_4 \DD^{\ve}+\beta_5\DD^{\ve}\cdot Q^{\ve}+\beta_6Q^{\ve}\cdot\DD^{\ve}+\beta_7(\DD^{\ve}\cdot {Q^{\ve}}^2+{Q^{\ve}}^2\cdot\DD^{\ve})\\
&+\frac{\mu_2}{2}(\dot{Q}^{\ve}-[\BOm^{\ve},Q^{\ve}])+\mu_1\big[Q^{\ve},(\dot{Q}^{\ve}-[\BOm^{\ve},Q^{\ve}])\big],\\
(\sigma^d_{\ve})_{ji}=&-\frac{\partial\CF_{\ve}}{\partial Q^{\ve}_{kl,j}}Q^{\ve}_{kl,i}
\eqdefa\sigma^d(Q^{\ve},Q^{\ve}).
\end{align*}
The tensor $\sigma^d(Q,\overline{Q})$ is denoted as
\begin{align*}
\sigma^d_{ji}(Q,\overline{Q})
=-\big(L_1Q_{kl,j}\overline{Q}_{kl,i}+L_2Q_{km,m}\overline{Q}_{kj,i}+L_3Q_{kj,l}\overline{Q}_{kl,i}\big).
\end{align*}
The molecular field $\HH^{\ve}$ is given by
\begin{align*}
\HH^{\ve}(Q)=-\frac{1}{\ve}\frac{\partial f_b}{\partial Q}+\partial_i\Big(\frac{\partial f_e}{\partial Q_{,i}}\Big)\eqdefa-\frac{1}{\ve}\MT(Q)-\ML(Q),
\end{align*}
where two operators $\MT$ and $\ML$ are respectively defined by
\begin{align*}
\MT(Q)=&-aQ-bQ^2+c|Q|^2Q+\frac13b|Q|^2\II,\\
(\ML(Q))_{kl}=&-\Big(L_1\Delta Q_{kl}+\frac12(L_2+L_3)(Q_{km,ml}+Q_{lm,mk}-\frac23\delta_{kl}Q_{ij,ij})\Big).
\end{align*}

For a given director field $\nn(t,\xx)$ and $s=\frac{b+\sqrt{b^2+24ac}}{4c}$, we define
\begin{align*}
\MP^{out}(Q)=&Q-(\nn\nn\cdot Q+Q\cdot\nn\nn)+2(Q:\nn\nn)\nn\nn,\\
\MH_{\nn}(Q)=&bs\Big(Q-(\nn\nn\cdot Q+Q\cdot\nn\nn)+\frac23(Q:\nn\nn)\II\Big)+2cs^2(Q:\nn\nn)(\nn\nn-\frac13\II),
\end{align*}
which will be explained in Subsection \ref{crit-liner}.
We also take the viscosity coefficients in the full inertial Ericksen-Leslie model as:
\begin{align}\label{leslie1-intro}
\left\{
\begin{aligned}
&\alpha_1=\beta_1s^2,\quad
\alpha_2=\frac12\mu_2s-\mu_1s^2, \\
&\alpha_3=\frac12\mu_2s+\mu_1s^2, \quad \alpha_4=\beta_4-\frac13(\beta_5+\beta_6)s+\frac29\beta_7s^2,\\
&\alpha_5=\beta_5s+\frac13\beta_7s^2,\quad
\alpha_6=\beta_6s+\frac13\beta_7s^2,
\end{aligned}
\right.
\end{align}
and the coefficients $\gamma_1$, $\gamma_2$ and the inertial coefficient $I$ are
\begin{align}\label{leslie2-intro}
\gamma_1=2\mu_1s^2,\quad\gamma_2=\mu_2s,\quad I=2Js^2.
\end{align}
In addition, the elastic constants in the Oseen-Frank energy are given by
\begin{align}\label{OF-LD-relation-intro}
k_1=k_3=2(L_1+L_2+L_3)s^2,\quad k_2=2L_1s^2,\quad k_4=L_3s^2.
\end{align}

Throughout this paper, we assume that the viscosity coefficient $\mu_1$ is large enough compared
with the inertial coefficient $J$, i.e., $\mu_1\gg J$, and the  condition (\ref{viscosity-cond}) holds, and the elastic constants $L_i(i=1,2,3)$ satisfy $L_1>0,~L_1+L_2+L_3>0$.
The main result of this paper is stated as follows.
\begin{theorem}\label{thm:main1}
Let $(\nn(t,\xx), \vv(t,\xx))$ be a smooth solution of the full inertial Ericksen-Leslie model (\ref{eq:EL-v})--(\ref{eq:EL-n}) on $[0,T]$
with the coefficients given by (\ref{leslie1-intro})-(\ref{OF-LD-relation-intro}), which satisfies
\beno
(\vv, \partial_t\nn, \nabla\nn)\in L^{\infty}([0,T];H^{k})\quad \textrm{for}\quad k\ge 20.
\eeno
Let $Q_0(t,\xx)=s\big(\nn(t,\xx)\nn(t,\xx)-\frac13\II\big)$, and the functions $\big(Q_1,Q_2,Q_3, \vv_1,\vv_2\big)$ are determined by Proposition \ref{prop:Hilbert}.
Suppose that the initial data $(Q^{\ve}(0,\xx), \partial_tQ^{\ve}(0,\xx), \vv^\ve(0,\xx))$ takes the form
\begin{align*}
&Q^\ve(0,\xx)=\sum^3_{k=0}\ve^kQ_{k}(0,\xx)+\ve^3Q_{R,0}^\ve(\xx),~~~
\vv^\ve(0,\xx)=\sum^2_{k=0}\ve^k\vv_{k}(0,\xx)+\ve^3\vv_{R,0}^\ve(\xx),\\
&\partial_tQ^\ve(0,\xx)=\sum^3_{k=0}\ve^k\partial_tQ_{k}(0,\xx)+\ve^3\partial_tQ_{R,0}^\ve(\xx),
\end{align*}
where $(Q^{\ve}_{R,0}(\xx), \partial_tQ^{\ve}_{R,0}(\xx), \vv^\ve_{R,0}(\xx))$ fulfils
\begin{align*}
\|\vv_{R,0}^\ve\|_{H^2}+\|Q_{R,0}^\ve\|_{H^3}+\|\partial_tQ_{R,0}^{\ve}\|_{H^2}+\ve^{-1}\|\MP^{out}(Q^\ve_{R,0})\|_{L^2}\le E_0.
\end{align*}
Then there exists $\ve_0>0$ and $E_1>0$ such that for all $\ve<\ve_0$, the inertial Qian-Sheng model (\ref{eq:QS-general-ve1})-(\ref{eq:QS-general-ve3}) has a unique solution
$(Q^\ve(t,\xx), \vv^\ve(t,\xx))$ on $[0,T]$ that has the Hilbert expansion
\begin{align*}
Q^\ve(t,\xx)=\sum^3_{k=0}\ve^kQ_k(t,\xx)+\ve^3Q^{\ve}_R(t,\xx),~~
\vv^\ve(t,\xx)=\sum^2_{k=0}\ve^k\vv_k(t,\xx)+\ve^3\vv^{\ve}_R(t,\xx),
\end{align*}
where, for any $t\in[0,T]$, $(Q^{\ve}_R,\vv^{\ve}_R)$ satisfies
\begin{align*}
\Ef(Q^{\ve}_R(t),\vv^{\ve}_R(t))\leq E_1.
\end{align*}
Here $\Ef$ is defined by
\begin{align}\label{energy-functional-Ef}
\Ef(Q,\vv)\eqdefa&\int_{\BR}\Big(|\vv|^2+|Q|^2+|\dot{Q}|^2+\frac{1}{\ve}\MH^{\ve}_{\nn}(Q):Q\Big)
+\ve^2\Big(|\nabla\vv|^2+|\partial_i\dot{Q}|^2\nonumber\\
&\quad+\frac{1}{\ve}\MH^{\ve}_{\nn}(\partial_iQ):\partial_iQ\Big)
+\ve^4\Big(|\Delta\vv|^2+|\Delta\dot{Q}|^2+\frac{1}{\ve}\MH^{\ve}_{\nn}(\Delta Q):\Delta Q\Big),
\end{align}
and $\dot{Q}=(\partial_t+\tilde{\vv}\cdot\nabla)Q,~\tilde{\vv}=\sum^2_{k=0}\ve^k\vv_k$, $\MH^\ve_\nn(Q)=\MH_{\nn}(Q)+\ve\ML(Q)$ in (\ref{energy-functional-Ef}) and the constant $E_1$ is independent of $\ve$.
\end{theorem}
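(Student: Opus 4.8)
\emph{Proof strategy.} The argument has three steps: (i) build the Hilbert expansion and extract the system satisfied by the remainder $(Q^\ve_R,\vv^\ve_R)$; (ii) prove a uniform-in-$\ve$ differential inequality for $\Ef(Q^\ve_R(t),\vv^\ve_R(t))$; (iii) use the local well-posedness of the Qian--Sheng model and a continuity argument to propagate the bound on all of $[0,T]$, and deduce uniqueness. For Step~(i), the profiles $(Q_1,Q_2,Q_3,\vv_1,\vv_2)$ are the ones furnished by Proposition~\ref{prop:Hilbert}. Substituting the ansatz into \eqref{eq:QS-general-ve1}--\eqref{eq:QS-general-ve3} and collecting like powers of $\ve$: the order $\ve^{-1}$ balance forces $Q_0$ to be a critical point of $f_b$, hence $Q_0=s(\nn\nn-\tfrac13\II)$ as assumed; the order $\ve^0$ balance reproduces the full inertial Ericksen--Leslie system \eqref{eq:EL-v}--\eqref{eq:EL-n} with coefficients \eqref{leslie1-intro}--\eqref{OF-LD-relation-intro} on the kernel (``in'') directions of $\MH_\nn$ and fixes the $\MP^{out}$-part of $Q_1$; the orders $\ve^1,\ve^2$ are then linear transport--elliptic systems for the remaining profiles, solvable because $\MH_\nn$ is invertible on the ``out'' subspace (Subsection~\ref{crit-liner}). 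Subtracting, $(Q^\ve_R,\vv^\ve_R)$ satisfies a quasilinear coupled system in which $Q^\ve_R$ obeys, schematically,
\begin{align*}
J\ddot{Q}^\ve_R+\mu_1\dot{Q}^\ve_R+\tfrac1\ve\MH^\ve_\nn(Q^\ve_R)=\mathcal{B}^\ve(Q^\ve_R,\vv^\ve_R)+\ve\,g^\ve,
\end{align*}
and $\vv^\ve_R$ obeys an incompressible equation with viscous stress, distortion stress coupled to $\MH^\ve_\nn(Q^\ve_R)$, and $O(\ve)$ forcing. Here $\mathcal{B}^\ve$ gathers the linear-in-remainder couplings whose $\ve$-free, non-coercive part lies in the range of the leading operator, so that \emph{no $\ve^{-1}$ forcing survives} --- this is precisely why three ($Q$) resp.\ two ($\vv$) profiles had to be peeled off --- while $g^\ve$ is polynomial in the smooth $\ve$-independent profiles and in $(Q^\ve_R,\vv^\ve_R)$ with derivatives, with $O(1)$ dependence on $\ve$.

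\textbf{Step 2 (uniform energy estimate).} The functional $\Ef$ is the natural $\ve$-anisotropic energy (to second spatial order) for this hyperbolic--parabolic system. Differentiating $\Ef(Q^\ve_R,\vv^\ve_R)$ along the remainder system and testing at each level (no derivative, $\ve\nabla$, $\ve^2\Delta$), the leading quadratic contributions are handled as follows. (a) Testing the $Q$-equation against $\dot{Q}^\ve_R$ and its weighted derivatives produces $\tfrac{J}{2}\tfrac{\ud}{\ud t}\|\dot{Q}^\ve_R\|^2$, the damping $\mu_1\|\dot{Q}^\ve_R\|^2$, and $\tfrac1{2\ve}\tfrac{\ud}{\ud t}\langle\MH^\ve_\nn(Q^\ve_R),Q^\ve_R\rangle$, up to transport commutators and the $O(\ve^3)$ mismatch between the material derivative defining $\Ef$ (built on $\tilde\vv$) and that in the equation (built on $\vv^\ve$). (b) Coercivity of $\MH^\ve_\nn$ --- positivity of $\MH_\nn$ off its two-dimensional kernel together with the elastic coercivity \eqref{L:postive} --- gives $\tfrac1\ve\langle\MH^\ve_\nn(Q),Q\rangle\gtrsim\ve^{-1}\|\MP^{out}(Q)\|^2+\|\nabla Q\|^2$ modulo lower order; this is exactly the $\ve^{-1}\|\MP^{out}(Q^\ve_R)\|$ weight present in $\Ef$ and in the data assumption, and it lets the $\ve^{-1}$-singular cross terms coming from $\mathcal{B}^\ve$ and $g^\ve$ be absorbed. (c) Testing the velocity equation against $\vv^\ve_R$, condition \eqref{viscosity-cond} yields a genuine dissipation $\gtrsim\|\nabla\vv^\ve_R\|^2$, the $\mu_2$-cross terms between the $\vv$- and $\dot{Q}$-equations cancel by Parodi's relation \eqref{Q-Parodi} exactly as in the energy law \eqref{QS-dissip}, and the distortion stress is treated by integrating by parts against $\ML(Q^\ve_R)$. (d) The hypothesis $\mu_1\gg J$ supplies the margin needed to dominate the inertial contributions that carry no intrinsic damping. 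The remaining nonlinearities are controlled by Sobolev embedding, interpolation, and the smoothness of the profiles (this is where $k\ge20$ is used), giving
\begin{align*}
\frac{\ud}{\ud t}\Ef(t)\le C\big(1+\Ef(t)\big)^{m}\Ef(t)+C\ve^2\big(1+\Ef(t)\big)^{m},
\end{align*}
while the data assumption gives $\Ef(0)\le C(E_0)$.

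\textbf{Step 3 (propagation and uniqueness).} For fixed $\ve$, De Anna--Zarnescu's local well-posedness \cite{DZ} gives a unique smooth solution of \eqref{eq:QS-general-ve1}--\eqref{eq:QS-general-ve3} on a maximal interval, whose Hilbert form persists as long as the remainder stays regular. Let $T^\ast_\ve\le T$ be maximal with $\sup_{[0,T^\ast_\ve]}\Ef\le E_1$. On $[0,T^\ast_\ve]$, the inequality of Step~2 and Gronwall give $\Ef(t)\le\big(\Ef(0)+C\ve^2T\big)\exp\!\big(C(1+E_1)^mT\big)$; choosing $E_1$ large (depending only on $C(E_0)$, $T$ and the fixed $m$) and then $\ve_0$ small forces $\Ef(t)<E_1$ strictly on $[0,T^\ast_\ve]$, hence $T^\ast_\ve=T$, so the solution together with its Hilbert expansion extends to $[0,T]$. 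Uniqueness of $(Q^\ve,\vv^\ve)$ follows from the same energy computation applied to the difference of two solutions sharing the prescribed data.

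\textbf{Main obstacle.} The crux is Step~2: closing one Gronwall inequality for the $\ve$-weighted second-order energy of a system that is simultaneously \emph{hyperbolic} (through $J\ddot{Q}$) and \emph{singularly perturbed} (through $\tfrac1\ve\MH^\ve_\nn$). At each derivative level, and with the correct power of $\ve$ on every term, one must extract the $\ve^{-1}$ gain on the $\MP^{out}$-component to neutralize the $\ve^{-1}$-singular forcing, assemble \emph{all} the $\vv$--$\dot{Q}$--stress cross terms into a single sign-definite dissipation using Parodi's relation and \eqref{viscosity-cond}, and exploit $\mu_1\gg J$ to tame the undamped inertial terms --- so that no term degenerates in the limit $\ve\to0$.
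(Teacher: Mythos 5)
Your overall architecture (Hilbert expansion, remainder system, $\ve$-weighted energy estimate, continuity argument) matches the paper's, and Steps~(i) and~(iii) are essentially correct. The gap is in Step~(2), at exactly the point the paper identifies as the crux, and your proposed fix there would not work.

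When you differentiate the singular part of the energy you do \emph{not} simply get $\tfrac1{2\ve}\tfrac{\ud}{\ud t}\langle\MH^\ve_\nn(Q_R),Q_R\rangle$ from testing against $\dot Q_R$: because $\MH^\ve_\nn$ depends on $t$ through $\nn(t,\xx)$, the difference produces the genuinely singular terms
$\tfrac1\ve\langle\dot{\overline{\nn\nn}}\cdot Q_R,Q_R\rangle$ and $\tfrac1\ve\langle(Q_R:\dot{\overline{\nn\nn}})\nn\nn,Q_R\rangle$. Your item (b) claims the coercivity $\tfrac1\ve\langle\MH^\ve_\nn(Q),Q\rangle\gtrsim\ve^{-1}\|\MP^{out}(Q)\|^2+\|\nabla Q\|^2$ absorbs such $\ve^{-1}$ cross terms. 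It does not: decomposing $Q_R=Q_R^{\top}+Q_R^{\bot}$, the dangerous piece is $\tfrac1\ve\langle\dot{\overline{\nn\nn}}\cdot Q_R^{\top},Q_R^{\bot}\rangle$, which the energy only bounds by $\ve^{-1}\|Q_R^{\top}\|_{L^2}\|Q_R^{\bot}\|_{L^2}\lesssim\ve^{-1/2}\Ef$, still blowing up as $\ve\to0$. In the non-inertial (parabolic) setting one absorbs this with the dissipation $\ve^{-2}\|\MH^\ve_\nn(Q_R)\|_{L^2}^2$, but that dissipation is absent here precisely because of the inertial term $J\ddot Q$ --- the paper states this explicitly. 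The actual remedy (Lemma~\ref{lem:singular}) is to substitute $\tfrac1\ve\MH^\ve_\nn(Q_R)$ back from the evolution equation (\ref{remainder-Q-R}), so that the cross term becomes a total time derivative $-J\tfrac{\ud}{\ud t}\langle\MH^{-1}_\nn(\dot{\overline{\nn\nn}}\cdot Q_R^{\top}),\dot Q_R+\vv_R\cdot\nabla Q^\ve\rangle$ plus controllable remainders; this forces one to work with the \emph{modified} energy $\widetilde\Ef$ of (\ref{compli-energ-func}), which carries the extra cross term $J\langle\MH^{-1}_\nn\CG(Q_R^{\top}),\dot Q_R+\vv_R\cdot\nabla Q^\ve\rangle$, and then to prove $\widetilde\Ef\sim\Ef$ using $\mu_1\gg J$ (Lemma~\ref{equiv-energy-functionals}). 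Your proposal closes the inequality directly for $\Ef$, which cannot be done.

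A second, related omission: you mention the "$O(\ve^3)$ mismatch" between the material derivatives built on $\tilde\vv$ and on $\vv^\ve$ but do not explain how to handle the resulting forcing $\widetilde\FF_R$, which contains terms like $(\partial_t+\tilde\vv\cdot\nabla)(\vv_R\cdot\nabla Q_R)$ and $\vv_R\cdot\nabla\dot Q_R$, i.e.\ second derivatives of the remainder that are not controlled by $\Ef$ or $\Ff$ at the top derivative level. The paper devotes Lemmas~\ref{lem:FR-1}--\ref{lem:FR-2} (and a long appendix) to these, again substituting from the equation and exploiting cancellations so that the leading contributions assemble into the perfect square $|\dot Q_R+\vv_R\cdot\nabla Q^\ve+Q_R|^2$ appearing in $\widetilde\Ef_0$ and into $\|\partial_i^m(\vv_R\cdot\nabla Q^\ve)\|^2_{L^2}$ at higher levels. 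Without these two devices --- the equation-substitution for the singular terms and the modified quadratic form --- the Gronwall inequality you assert in Step~2 cannot be established.
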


Let us spend some words on the rough idea of proving the main result. We first make a formal expansion for the solution $(Q^{\ve},\vv^{\ve})$:
\begin{align*}
Q^{\ve}(t,\xx)=&Q_0(t,\xx)+\ve Q_1(t,\xx)+\ve^2Q_2(t,\xx)+\ve^3Q_3(t,\xx)+\ve^3Q_R(t,\xx),\\
\vv^{\ve}(t,\xx)=&\vv_0(t,\xx)+\ve\vv_1(t,\xx)+\ve^2\vv_2(t,\xx)+\ve^3\vv_R(t,\xx).
\end{align*}
If plugging the above expansion into the inertial Qian-Sheng system (\ref{eq:QS-general-ve1})-(\ref{eq:QS-general-ve3}), then we obtain a hierarchy of equations in Subsection \ref{Hilb-expan}.
The $O(\ve^{-1})$ equation gives $\MT(Q_0)=0$, which implies by Proposition \ref{critical-points} that
\begin{align*}
Q_0=s\Big(\nn\nn-\frac13\II\Big)
\end{align*}
for some $\nn\in\BS$ and $s=\frac{b+\sqrt{b^2+24ac}}{4c}$. For the $O(1)$ system, we can obtain that $(\vv_0,\nn)$ is
exactly a solution of the full inertial Ericksen-Leslie system with the coefficients given by (\ref{leslie1-intro})-(\ref{OF-LD-relation-intro}).
Moreover, it can be shown that the existence of $(Q_i,\vv_i)$ with $i\ge 1$ for $O(\ve^i)$ can be guaranteed
by the fact that $(Q_i,\vv_i)$ satisfies a linear dissipative system, see Proposition \ref{prop:Hilbert}.

The core part to rigorously justify the uniaxial limit is to prove the uniform (in $\ve$) bounds for the remainders$(Q_R,\vv_R)$.
For this end, we write the equation for $(Q_R,\vv_R)$ which roughly reads as:
\begin{align*}
J\ddot{Q}_R+\mu_1\dot{Q}_R=&-\frac{1}{\ve}\MH^{\ve}_{\nn}(Q_R)
-\frac{\mu_2}{2}\DD_R+\mu_1[\BOm_R,Q_0]+\widetilde{\FF}_R+\cdots,\\
\frac{\partial\vv_R}{\partial t}+\tilde{\vv}\cdot\nabla\vv_R=&-\nabla p_R+\nabla\cdot\Big(\frac{\mu_2}{2}(\dot{Q}_R-[\BOm_R,Q_0])+\mu_1\big[Q_0,(\dot{Q}_R-[\BOm_R,Q_0])\big]\Big)\\
&+\cdots.
\end{align*}
The main difficulty terms are the singular (in $\ve$) term $\frac{1}{\ve}\MH^{\ve}_{\nn}(Q_R)$, and the term
$\widetilde{\FF}_R$ which includes second order derivatives of $Q_R$ (see (\ref{remainder-Q-R}) for the precise definition).
To control the singular term, we have to include $\langle \frac{1}{\ve}\MH^{\ve}_{\nn}(Q_R), Q_R\rangle$ into the energy, see (\ref{energy-functional-Ef}). However,
the operator $\MH^{\ve}_{\nn}$ is dependent of $t$, and its time derivative will bring some difficult terms such as
\begin{align}\label{sigul-terms}
\frac{1}{\ve}\langle\dot{\overline{\nn\nn}}\cdot Q_R,Q_R\rangle,\quad \frac{1}{\ve}\langle(Q_R:\dot{\overline{\nn\nn}})\nn\nn,Q_R\rangle.
\end{align}
In the non-inertial case, (\ref{sigul-terms}) can be controlled with the help of the dissipation term $\frac{1}{\ve^2}\|\MH^{\ve}_{\nn}(Q_R)\|_{L^2}^2$, see \cite[Lemma 4.1]{WZZ3}.
However, it does not work in our case.
Another trouble term $\widetilde{\FF}_R$ can not be directly estimated either, as it contains second order derivatives of $Q_R$.

To overcome these difficulties, we  choose a delicate modified energy $\widetilde{\Ef}$ (see (\ref{compli-energ-func})),
and then use the symmetric and cancellation structures of the system to close the energy estimate. Some key steps for
the estimates are summarized in Lemma \ref{lem:singular}-\ref{lem:FR-2}, where the evolution equation of $Q_R$ will be frequently used.
Moreover, we can show that the energy functional $\widetilde{\Ef}$ is positive and $\widetilde{\Ef}\sim\Ef$ if $\mu_1\gg J$, and thus accomplish  the main steps of the proof
for Theorem \ref{thm:main1} in principle.

\section{The Hilbert expansion}
This section is devoted to deriving the Hilbert expansion for the inertial Qian-Sheng system (\ref{eq:QS-general-ve1})-(\ref{eq:QS-general-ve3}).
In particular, we will show that the $O(1)$ system is just the full Ericksen-Leslie system.
The existence of $O(\ve^k)(k\ge1)$ system in the Hilbert expansion will also be proved.

We first give some preliminary results about critical points and the linearized operator.

\subsection{Critical points and the linearized operator}\label{crit-liner}
 A tensor $Q_0$ is called a critical point of $F_b(Q)$ if $\MT(Q_0):=\frac{\partial F_b}{\partial Q}\big|_{Q=Q_0}=0$. The following characterization of critical points can be seen from \cite{Maj,WZZ3}.
\begin{proposition}\label{critical-points}
$\MT(Q)=0$ if and only if $Q=s(\nn\nn-\frac13\II)$ for some $\nn\in\mathbb{S}^2$, where $s=0$ or a solution of $2cs^2-bs+3a=0$, that is,
\begin{align*}
s_{1}=\frac{b+\sqrt{b^2+24ac}}{4c} ~or~ s_{2}=\frac{b-\sqrt{b^2+24ac}}{4c}.
\end{align*}
Moreover, the critical point $Q_0=s(\nn\nn-\frac13\II)$ is stable if $s=s_1$.
\end{proposition}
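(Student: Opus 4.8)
The plan is to compute the gradient $\MT(Q)=\partial F_b/\partial Q$ explicitly on $\mathbb{S}^3_0$ and then analyze the algebraic equation $\MT(Q)=0$. Recall $F_b(Q)=-\frac a2\operatorname{Tr}(Q^2)-\frac b3\operatorname{Tr}(Q^3)+\frac c4(\operatorname{Tr}(Q^2))^2$; since the variation is taken over symmetric traceless tensors, the gradient must be projected back onto $\mathbb{S}^3_0$, which produces exactly the stated form
\begin{align*}
\MT(Q)=-aQ-b\Big(Q^2-\tfrac13|Q|^2\II\Big)+c|Q|^2Q.
\end{align*}
First I would verify this formula by differentiating each term (using $\partial\operatorname{Tr}(Q^3)/\partial Q=3Q^2$ and subtracting the trace part), so that the $\frac13 b|Q|^2\II$ term appears from the traceless projection of $Q^2$.

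Next, for the ``if'' direction, I would substitute $Q=s(\nn\nn-\frac13\II)$ with $|\nn|=1$. Using $(\nn\nn-\frac13\II)^2=\frac13\nn\nn+\frac19\II=\frac13(\nn\nn-\frac13\II)+\frac29\II$ and $|Q|^2=s^2\operatorname{Tr}(\nn\nn-\frac13\II)^2=\frac23 s^2$, a direct computation collapses $\MT(Q)$ to a scalar multiple of $\nn\nn-\frac13\II$ (the $\II$ components cancel by tracelessness), and the scalar is $-as-\frac b3 s^2\cdot\frac{1}{?}\cdots$, which after simplification is proportional to $\frac13 s(2cs^2-bs+3a)$. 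Hence $\MT(Q)=0$ forces $s=0$ or $2cs^2-bs+3a=0$, giving $s_1,s_2$. For the ``only if'' direction, I would diagonalize $Q$ (possible since $Q$ is symmetric) in an orthonormal eigenbasis; then $\MT(Q)$ is simultaneously diagonalized, and setting each diagonal entry of $\MT(Q)$ to zero gives, for each eigenvalue $\la_i$ of $Q$, the relation $-a\la_i-b\la_i^2+c|Q|^2\la_i+\frac13 b|Q|^2=0$, i.e. all $\la_i$ satisfy the same quadratic $b t^2+(a-c|Q|^2)t-\frac13 b|Q|^2=0$. A quadratic has at most two distinct roots, and combined with the traceless constraint $\la_1+\la_2+\la_3=0$ one deduces that either all $\la_i$ are equal (forcing $Q=0$, the isotropic case $s=0$) or exactly two coincide (the uniaxial case), which is precisely $Q=s(\nn\nn-\frac13\II)$ with $\nn$ the distinguished unit eigenvector; plugging back recovers $2cs^2-bs+3a=0$.

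For the stability claim, I would compute the Hessian of $F_b$ at the critical point $Q_0=s_1(\nn\nn-\frac13\II)$, i.e. the bilinear form $Q\mapsto \langle \delta^2 F_b(Q_0)[Q],Q\rangle$ on the five-dimensional space $\mathbb{S}^3_0$, and show it is positive definite when $s=s_1$ (it suffices, for the purposes of this paper, that it be positive semidefinite with the degeneracy only along the rotation direction, but the cited references \cite{Maj,WZZ3} establish the sharper statement). Decomposing a test tensor $Q$ according to the eigen-directions of $\nn\nn$ — the $\nn\nn$-parallel part, the mixed part $\nn\nn\cdot Q+Q\cdot\nn\nn$, and the part orthogonal to both — the second variation block-diagonalizes, and on each block one checks positivity using $s_1>s_2$, equivalently $b^2+24ac>0$ together with $s_1>0$; the key point is that $2cs_1-b=\sqrt{b^2+24ac}>0$, which makes the relevant eigenvalues of the Hessian positive. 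The main obstacle is bookkeeping: correctly projecting onto $\mathbb{S}^3_0$ at every step (the $\frac13\II$ corrections are easy to drop) and cleanly organizing the Hessian computation; there is no deep difficulty, and indeed the result is classical — I would cite \cite{Maj,WZZ3} for the detailed stability analysis and present only the gradient computation and the classification in full.
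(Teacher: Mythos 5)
The paper offers no proof of this proposition; it simply quotes the result from \cite{Maj,WZZ3}, and your sketch reproduces the standard argument from those references (explicit traceless gradient, verification on uniaxial tensors, and simultaneous diagonalization so that all eigenvalues of a critical $Q$ satisfy one quadratic, whence tracelessness forces $Q$ to be isotropic or uniaxial). The structure of the argument is correct, including the classification step and the deferral of the detailed Hessian computation to the references.

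Two arithmetic slips are worth fixing. First, carrying out the substitution $Q=s(\nn\nn-\frac13\II)$ carefully gives $\MT(Q)=\frac{s}{3}\,(2cs^2-bs-3a)\,(\nn\nn-\frac13\II)$, i.e.\ the constant term is $-3a$, not $+3a$ as you wrote (and as the proposition's displayed equation states). The roots quoted in the proposition, $s_{1,2}=\frac{b\pm\sqrt{b^2+24ac}}{4c}$, are the roots of $2cs^2-bs-3a=0$, and the identity $3a=2cs^2-bs$ is exactly what is needed later to reduce $\MH_{Q_0}$ to the stated form of $\MH_{\nn}$; so the sign in the proposition is a typo and your computation, done correctly, will expose it rather than confirm it. Second, in the stability discussion the key positive quantity is $4cs_1-b=\sqrt{b^2+24ac}$, not $2cs_1-b$ (note the denominator $4cs-b$ in the paper's formula for $\MH^{-1}_{\nn}$); the quantity $2cs_1-b=\frac{\sqrt{b^2+24ac}-b}{2}$ is also positive but is not the one that appears. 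Finally, your ``only if'' step tacitly needs $b>0$: if $b=0$ the common ``quadratic'' for the eigenvalues degenerates and $f_b$ depends on $|Q|^2$ alone, so the critical set contains biaxial tensors and the classification fails. This hypothesis is implicit throughout the paper (e.g.\ $\MH^{-1}_{\nn}$ involves $\frac{1}{bs}$), but it should be stated in a self-contained proof.
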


Given a critical point $Q_0=s(\nn\nn-\frac13\II)$, the linearized operator $\MH_{Q_0}$ of $\MT(Q)$ around $Q_0$ is given by
\begin{align*}
\MH_{Q_0}(Q)=aQ-b(Q_0\cdot Q+Q\cdot Q_0)+c|Q_0|^2Q+2(Q_0:Q)\Big(cQ_0+\frac{b}{3}\II\Big).
\end{align*}
Then a direct calculation yields
\begin{align}
\MH_{Q_0}(Q)=&bs\big(Q-(\nn\nn\cdot Q+Q\cdot\nn\nn)+\frac23(Q:\nn\nn)\II\big)+2cs^2(Q:\nn\nn)(\nn\nn-\frac13\II)\nonumber\\
\eqdefa&\MH_{\nn}(Q).
\end{align}

The kernel space of the linearized operator $\MH_{\nn}$, being a two-dimensional subspace of $\mathbb{S}^3_0$, can be defined as
\begin{align*}
{\rm Ker}\MH_{\nn}\eqdefa\{\nn\nn^{\perp}+\nn^{\perp}\nn\in\mathbb{S}^3_0: \nn^{\perp}\in\mathbb{V}_{\nn}\},
\end{align*}
for any given $\nn\in\mathbb{S}^2$, where $\mathbb{V}_{\nn}\eqdefa\{\nn^{\perp}\in\BR: \nn^{\perp}\cdot\nn=0\}$. Let $\mathscr{P}^{in}$ be the projection operator from $\mathbb{S}^3_0$ to
$\text{Ker}\MH_{\nn}$ and $\MP^{out}$ the projection operator from $\mathbb{S}^3_0$ to
$(\text{Ker}\MH_{\nn})^{\perp}$. Using the following simple fact that
\[
 |Q-(\nn\nn^{\perp}+\nn^{\perp}\nn)|^2
 =|Q|^2-2|Q\cdot\nn|^2+2|Q:\nn\nn|^2+|\nn^{\perp}-(\II-\nn\nn)\cdot Q\cdot\nn|^2,
\]
then the projection operators $\mathscr{P}^{in}$ and $\mathscr{P}^{out}$ are expressed as, respectively,
\begin{align}
 \MP^{in}(Q)=&\nn[(\II-\nn\nn)\cdot Q\cdot\nn]+[(\II-\nn\nn)\cdot Q\cdot\nn]\nn\nonumber\\
 =&(\nn\nn\cdot Q+Q\cdot\nn\nn)-2(Q:\nn\nn)\nn\nn,\label{projection_in1}\\
 \MP^{out}(Q)=&Q- \MP^{in}(Q)\nonumber\\
 =&Q-(\nn\nn\cdot Q+Q\cdot\nn\nn)+2(Q:\nn\nn)\nn\nn.\label{projection_out1}
\end{align}

The important properties of the linearized operator $\MH_{\nn}$ can be found in \cite{WZZ3}.
\begin{proposition}\label{linearized-oper-prop}
{\rm(i)} For any $\nn\in\BS$, it holds that $\MH_{\nn}\mathrm{Ker}\MH_{\nn}=0$, i.e., $\MH_{\nn}(Q)\in(\mathrm{Ker}\MH_{\nn})^{\perp}$.\\
{\rm(ii)} There exists a constant $C_0=c_0(a,b,c)>0$ such that for any $Q\in(\mathrm{Ker}\MH_{\nn})^{\perp}$,
\begin{align*}
\MH_{\nn}(Q):Q\geq c_0|Q|^2.
\end{align*}
{\rm(iii)} $\MH_{\nn}$ is a 1-1 map on $(\mathrm{Ker}\MH_{\nn})^{\perp}$ and its inverse $\MH^{-1}_{\nn}$ is given by
\begin{align}\label{HM-inverse}
\MH^{-1}_{\nn}(Q)=&\frac{1}{bs}\Big(Q-(\nn\nn\cdot Q+Q\cdot\nn\nn)+\frac23(Q:\nn\nn)\II\Big)\nonumber\\
&+\frac{4b+2cs}{bs(4cs-b)}(Q:\nn\nn)(\nn\nn-\frac13\II).
\end{align}
\end{proposition}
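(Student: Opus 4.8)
The three assertions reduce to a finite-dimensional spectral analysis of the linear map $\MH_{\nn}$ on the five-dimensional Euclidean space $(\mathbb{S}^3_0,\,:\,)$, and the plan is to diagonalize $\MH_{\nn}$ in a basis adapted to $\nn$ and then read off (i)--(iii). First I fix $\nn\in\BS$, choose an orthonormal basis $\{\ee_1,\ee_2\}$ of $\mathbb{V}_{\nn}$ so that $\{\nn,\ee_1,\ee_2\}$ is an orthonormal basis of $\BR$, and introduce the orthonormal basis $\{E_0,E_1,E_2,E_3,E_4\}$ of $\mathbb{S}^3_0$ given by
\[
E_0=\sqrt{\tfrac32}\big(\nn\nn-\tfrac13\II\big),\quad
E_1=\tfrac{1}{\sqrt2}(\ee_1\ee_1-\ee_2\ee_2),\quad
E_2=\tfrac{1}{\sqrt2}(\ee_1\ee_2+\ee_2\ee_1),
\]
\[
E_3=\tfrac{1}{\sqrt2}(\nn\ee_1+\ee_1\nn),\quad
E_4=\tfrac{1}{\sqrt2}(\nn\ee_2+\ee_2\nn).
\]
By the definition of $\mathrm{Ker}\MH_{\nn}$ in the text one has $\mathrm{Ker}\MH_{\nn}=\mathrm{span}\{E_3,E_4\}$, hence $(\mathrm{Ker}\MH_{\nn})^{\perp}=\mathrm{span}\{E_0,E_1,E_2\}$. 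The only inputs needed are the elementary identities $\nn\nn\cdot E_3+E_3\cdot\nn\nn=E_3$ (and likewise for $E_4$), $\nn\nn\cdot E_j+E_j\cdot\nn\nn=0$ and $E_j:\nn\nn=0$ for $j=1,2,3,4$, together with $\nn\nn\cdot(\nn\nn-\tfrac13\II)+(\nn\nn-\tfrac13\II)\cdot\nn\nn=\tfrac43\nn\nn$ and $(\nn\nn-\tfrac13\II):\nn\nn=\tfrac23$, all immediate from $\nn\cdot\nn=1$ and $\nn\cdot\ee_i=0$.

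Plugging these into the formula for $\MH_{\nn}$ gives
\[
\MH_{\nn}E_3=\MH_{\nn}E_4=0,\qquad \MH_{\nn}E_1=bs\,E_1,\qquad \MH_{\nn}E_2=bs\,E_2,\qquad
\MH_{\nn}E_0=\tfrac{s(4cs-b)}{3}\,E_0,
\]
so $\MH_{\nn}$ is diagonal, hence self-adjoint, in $\{E_0,\dots,E_4\}$, with eigenvalues $\lambda_0:=\tfrac{s(4cs-b)}{3}$ (eigenvector $E_0$), $\lambda_1=\lambda_2:=bs$ (eigenvectors $E_1,E_2$), and $0$ (eigenvectors $E_3,E_4$). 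Assertion (i) follows at once: $\MH_{\nn}$ annihilates $E_3,E_4$, so $\MH_{\nn}\mathrm{Ker}\MH_{\nn}=0$, and $\mathrm{Range}\,\MH_{\nn}\subseteq\mathrm{span}\{E_0,E_1,E_2\}=(\mathrm{Ker}\MH_{\nn})^{\perp}$. For assertion (ii) I use that $s=s_1=\tfrac{b+\sqrt{b^2+24ac}}{4c}$ is the stable critical value, whence $4cs-b=\sqrt{b^2+24ac}>0$ and $bs>0$ by positivity of the bulk constants, so $\lambda_0,\lambda_1,\lambda_2>0$; since $\MH_{\nn}$ is self-adjoint and leaves $(\mathrm{Ker}\MH_{\nn})^{\perp}$ invariant, $\MH_{\nn}(Q):Q\ge c_0|Q|^2$ on $(\mathrm{Ker}\MH_{\nn})^{\perp}$ with $c_0:=\min\{bs,\lambda_0\}>0$.

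For assertion (iii) the restriction of $\MH_{\nn}$ to $(\mathrm{Ker}\MH_{\nn})^{\perp}$ is diagonal with nonzero entries, hence a bijection of the three-dimensional space $(\mathrm{Ker}\MH_{\nn})^{\perp}$ onto itself, in particular one-to-one there. To identify its inverse with the claimed formula, I denote by $\Phi$ the right-hand side of (\ref{HM-inverse}) and verify, using the same elementary identities, that $\Phi$ is linear with $\Phi(E_1)=\tfrac{1}{bs}E_1$, $\Phi(E_2)=\tfrac{1}{bs}E_2$, $\Phi(E_3)=\Phi(E_4)=0$, and $\Phi(E_0)=\tfrac{3}{s(4cs-b)}E_0$; the last equality amounts to the scalar identity $-\tfrac{1}{3bs}+\tfrac{2(4b+2cs)}{3bs(4cs-b)}=\tfrac{3}{s(4cs-b)}$, i.e. $-(4cs-b)+2(4b+2cs)=9b$. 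As $\Phi$ then coincides with $\MH_{\nn}^{-1}$ on each eigenvector spanning $(\mathrm{Ker}\MH_{\nn})^{\perp}$ (and both $\Phi$ and the pseudo-inverse vanish on $\mathrm{Ker}\MH_{\nn}$), we conclude $\Phi=\MH_{\nn}^{-1}$ on $(\mathrm{Ker}\MH_{\nn})^{\perp}$, which is (iii).

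There is no genuine obstacle in this argument — it is a routine diagonalization followed by two short scalar checks. The two places that call for a little care are: invoking the explicit value $s=s_1$ (and $a,b,c>0$) so that the nonzero eigenvalues $bs$ and $\tfrac{s(4cs-b)}{3}=\tfrac{s\sqrt{b^2+24ac}}{3}$ are strictly positive — which is precisely why the proposition is formulated at the stable critical point — and the algebraic verification of the closed form (\ref{HM-inverse}). One could equivalently avoid coordinates by displaying the orthogonal $\MH_{\nn}$-invariant decomposition $\mathbb{S}^3_0=\mathrm{span}\{\nn\nn-\tfrac13\II\}\oplus\{Q\in\mathbb{S}^3_0:Q\cdot\nn=0\}\oplus\mathrm{Ker}\MH_{\nn}$, on whose summands $\MH_{\nn}$ acts as $\lambda_0\,\mathrm{id}$, $bs\,\mathrm{id}$ and $0$ respectively; the same conclusions result.
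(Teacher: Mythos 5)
Your proof is correct, and in fact the paper itself contains no proof of this proposition: it simply defers to the reference \cite{WZZ3}, where the argument is the same kind of spectral computation you carry out. Your diagonalization is sound: $\MH_{\nn}$ acts as $0$ on $\mathrm{Ker}\MH_{\nn}=\mathrm{span}\{E_3,E_4\}$, as $bs$ on $\{Q\in\mathbb{S}^3_0:Q\cdot\nn=0\}=\mathrm{span}\{E_1,E_2\}$, and as $\frac{s(4cs-b)}{3}$ on $\mathrm{span}\{\nn\nn-\frac13\II\}$, and both scalar checks are right ($4cs-b=\sqrt{b^2+24ac}$ at $s=s_1$, and $-(4cs-b)+2(4b+2cs)=9b$ for the inverse formula), so (i)--(iii) follow exactly as you say. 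Two small points to tidy. First, in your list of ``elementary identities'' you write $\nn\nn\cdot E_j+E_j\cdot\nn\nn=0$ for $j=1,2,3,4$, which contradicts the correct identity $\nn\nn\cdot E_3+E_3\cdot\nn\nn=E_3$ stated immediately before; the vanishing holds only for $j=1,2$, and since your eigenvalue computations use the correct relations this is only a slip of wording, but it should be fixed. Second, the strict positivity of the nonzero eigenvalues (hence of $c_0$) rests on $b>0$ and $c>0$, whereas the paper loosely calls $a,b,c$ ``non-negative''; it is worth saying explicitly that $b,c>0$ (as assumed in \cite{WZZ3}) is what gives $bs>0$ and $s(4cs-b)=s\sqrt{b^2+24ac}>0$ at the stable value $s=s_1$.
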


\subsection{The Hilbert expansion}\label{Hilb-expan}
Let $(Q^{\ve},\vv^{\ve})$ be a solution of the system (\ref{eq:QS-general-ve1})-(\ref{eq:QS-general-ve3}).
We perform the following Hilbert expansion:
\begin{align}
&Q^{\ve}=\sum^3_{k=0}\ve^kQ_k+\ve^3 Q_{R}\eqdefa\widetilde{Q}+\ve^3 Q_{R},\label{Qvar}\\
&\vv^{\ve}=\sum^2_{k=0}\ve^k\vv_k+\ve^3 \vv_{R}\eqdefa\tilde{\vv}+\ve^3 \vv_{R},\label{vvar}
\end{align}
where $Q_i(0\le i\le 3),\vv_j(0\le j\le2)$ do not depend on $\ve$, while $(Q_R,\vv_R)$ are called the remainder term which depend upon $\ve$.

As is shown in (\ref{expan:Q0-1})-(\ref{eq:expansion2-ve3}) below,
inserting the Hilbert expansion (\ref{Qvar})-(\ref{vvar}) into the system (\ref{eq:QS-general-ve1})-(\ref{eq:QS-general-ve3}) and equating like powers of $\ve$ leads to a hierarchy of equations. We will prove that $(Q_i, \vv_i)(0\le i\le 2)$ and $Q_3$ can be determined in this way: $Q_0$ must be a critical point of $\MT(Q)$, and the system of $(Q_0,\vv_0)$ could be reduced to the full inertial Ericksen-Leslie system, while $(Q_i, \vv_i)(1\le i\le 2)$ and $Q_3$ solve the linear equations obtained by using the projection operators.

For $Q_i\in\mathbb{R}^{3\times3}(i=1,2,3)$, we introduce the following definitions:
\begin{align*}
\MB(Q_1,Q_2)\eqdefa& Q_1\cdot Q_2+Q^T_2\cdot Q^T_1-\frac23(Q_1:Q_2)\II,\\
\MC(Q_1,Q_2,Q_3)\eqdefa& Q_1(Q_2:Q_3)+Q_2(Q_1:Q_3)+Q_3(Q_1:Q_2).
\end{align*}
Let $\widehat{Q}^{\ve}=Q_1+\ve Q_2+\ve^2Q_3$, just as the polynomial expansion technique adopted in \cite{WZZ3}, we get the expansion of $\MT(Q^{\ve})$ in $\ve$ as follows:
\begin{align}\label{CJQ-ve}
\MT(Q^{\ve})=&\MT(Q_0)+\ve\MH_{\nn}(Q_1)+\ve^2(\MH_{\nn}(Q_2)+\BB_1)+\ve^3(\MH_{\nn}(Q_3)+\BB_2)\nonumber\\
&+\ve^3\MH_{\nn}(Q_R)+\ve^4\MT^{\ve}_R,
\end{align}
where $\BB_1, \BB_2$ and $\BB^{\ve}$, independent of $Q_R$, are respectively
\begin{align*}
\BB_1=&-\frac{b}{2}\MB(Q_1,Q_1)+c\MC(Q_0,Q_1,Q_1),\\
\BB_2=&-b\MB(Q_1,Q_2)+2c\MC(Q_0,Q_1,Q_2),\\
\BB^{\ve}=&-\frac{b}{2}\sum\limits_{\mbox{\tiny$\begin{array}{c}
i+j\geq4\\
1\leq i,j\leq 3\end{array}$}}\ve^{i+j-4}\MB(Q_i,Q_j)\\
&+\frac{c}{3}\sum_{\mbox{\tiny$\begin{array}{c}
i+j+k\geq4\\
\text{at least two of}~i,j,k~\text{are not zero}\end{array}$}}
\ve^{i+j+k-4}\MC(Q_i,Q_j,Q_k),
\end{align*}
and the fourth order term $\MT^{\ve}_R$ in $\ve$ is given by
\begin{align}\label{TR}
\MT^{\ve}_R=&\BB^{\ve}-b\MB(\widehat{Q}^{\ve},Q_R)+c\MC(Q_R,\widehat{Q}^{\ve},Q_0)
+\frac{c}{2}\ve\MC(Q_R,\widehat{Q}^{\ve},\widehat{Q}^{\ve})\nonumber\\
&-\frac{b}{2}\ve^2\MB(Q_R,Q_R)+c\ve^2\MC(Q_R,Q_R,Q_0+\ve\widehat{Q}^{\ve})+c\ve^5\MC(Q_R,Q_R,Q_R).
\end{align}
For the sake of brevity, we also denote
\begin{align*}
&\HH_0=\MH_{\nn}(Q_1)+\ML(Q_0),\\
&\HH_1=\MH_{\nn}(Q_2)+\ML(Q_1)+\BB_1,\\
&\HH_2=\MH_{\nn}(Q_3)+\ML(Q_2)+\BB_2.
\end{align*}

We are now in a position to write down the expansion of the original system (\ref{eq:QS-general-ve1})-(\ref{eq:QS-general-ve3})
and collect the terms (independent of $Q_R$) with same order of $\ve$. Specifically, we have:\\
$\bullet$~{\bf The $O(\ve^{-1})$ system}\\
\begin{align}\label{expan:Q0-1}
\MT(Q_0)=0.
\end{align}
$\bullet$~{\bf Zero order term in $\ve$}\\
\begin{align}
J\ddot{Q}_0+\mu_1\dot{Q}_0=&-\HH_0-\frac{\mu_2}{2}\DD_0+\mu_1[\BOm_0, Q_0],\label{eq:expansion0-ve1}\\
\frac{\partial\vv_0}{\partial t}+\vv_0\cdot\nabla\vv_0=&-\nabla p_0+\nabla\cdot\Big(\beta_1Q_0(Q_0:\DD_0)
+\beta_4\DD_0+\beta_5\DD_0\cdot Q_0\nonumber\\
&+\beta_6Q_0\cdot\DD_0
+\beta_7(\DD_0\cdot Q^2_0+Q^2_0\cdot\DD_0)\nonumber\\
&+\frac{\mu_2}{2}\CN_0
+\mu_1[Q_0,\CN_0]+\sigma^d(Q_0,Q_0)\Big),\label{eq:expansion0-ve2}\\
\nabla\cdot\vv_0=&0,\label{eq:expansion0-ve3}
\end{align}
where
\begin{align*}
\ddot{Q}_0=(\partial_t+\vv_0\cdot\nabla)\dot{Q}_0,\quad \dot{Q}_0=(\partial_t+\vv_0\cdot\nabla)Q_0,\quad
\CN_0=\dot{Q}_0-[\BOm_0,Q_0].
\end{align*}
$\bullet$~{\bf First order term in $\ve$}\\
\begin{align}
J\ddot{Q}_1+\mu_1\dot{Q}_1=&-\HH_1-\frac{\mu_2}{2}\DD_1+\mu_1\big([\BOm_1, Q_0]+[\BOm_0, Q_1]
-\vv_1\cdot \nabla Q_0\big)\nonumber\\
&-J\Big(2\vv_1\cdot\nabla(\partial_tQ_0)+\partial_t\vv_1\cdot\nabla Q_0
+(\vv_1\cdot\nabla)\vv_0\cdot\nabla Q_0\nonumber\\
&+(\vv_0\cdot\nabla)\vv_1\cdot\nabla Q_0+(\vv_1\vv_0:\nabla^2)Q_0+(\vv_0\vv_1:\nabla^2)Q_0\Big),\label{eq:expansion1-ve1}\\
\frac{\partial\vv_1}{\partial t}+\vv_0\cdot\nabla\vv_1=&-\vv_1\cdot\nabla\vv_0-\nabla p_1+\nabla\cdot\Big(\beta_1\big(Q_0(Q_0:\DD_1)\nonumber\\
&+Q_0(Q_1:\DD_0)+Q_1(Q_0:\DD_0)\big)+\beta_4\DD_1\nonumber\\
&+\beta_5(\DD_0\cdot Q_1+\DD_1\cdot Q_0)+\beta_6(Q_0\cdot\DD_1+Q_1\cdot\DD_0)\nonumber\\
&+\beta_7\big(\DD_1\cdot Q^2_0+Q^2_0\cdot\DD_1+\DD_0\cdot Q_1\cdot Q_0+\DD_0\cdot Q_0\cdot Q_1\nonumber\\
&+Q_1\cdot Q_0\cdot\DD_0+Q_0\cdot Q_1\cdot\DD_0\big)+\frac{\mu_2}{2}\overline{\CN}_1\nonumber\\
&+\mu_1([Q_1,\CN_0]+[Q_0,\overline{\CN}_1])
+\sigma^d(Q_1,Q_0)+\sigma^d(Q_0,Q_1)\Big),\label{eq:expansion1-ve2}\\
\nabla\cdot\vv_1=&0,\label{eq:expansion1-ve3}
\end{align}
where
\begin{align*}
\ddot{Q}_1=&(\partial_t+\vv_0\cdot\nabla)\dot{Q}_1,\quad \dot{Q}_1=(\partial_t+\vv_0\cdot\nabla)Q_1,\\
\CN_1=&\dot{Q}_1-[\BOm_0,Q_1],\quad \overline{\CN}_1=\CN_1+\vv_1\cdot\nabla Q_0-[\BOm_1, Q_0].
\end{align*}
$\bullet$~{\bf Second order term in $\ve$}\\
\begin{align}
J\ddot{Q}_2+\mu_1\dot{Q}_2=&-\HH_2-\frac{\mu_2}{2}\DD_2+\mu_1\Big([\BOm_2, Q_0]+[\BOm_0, Q_2]
+[\BOm_1, Q_1]\nonumber\\
&-\vv_2\cdot\nabla\vv_0-\vv_1\cdot \nabla Q_1\Big)\nonumber\\
&-J\Big(2\vv_1\cdot\nabla(\partial_tQ_1)+2\vv_2\cdot\nabla(\partial_tQ_0)+\partial_t\vv_1\cdot\nabla Q_1\nonumber\\
&+\partial_t\vv_2\cdot\nabla Q_0+(\vv_1\cdot\nabla)\vv_1\cdot\nabla Q_0+(\vv_0\cdot\nabla)\vv_1\cdot\nabla Q_1\nonumber\\
&+(\vv_0\vv_1:\nabla^2)Q_1+(\vv_1\vv_0:\nabla^2)Q_1+(\vv_1\vv_1:\nabla^2)Q_0\nonumber\\
&+(\vv_2\vv_0:\nabla^2)Q_0+(\vv_0\vv_2:\nabla^2)Q_0\Big),\label{eq:expansion2-ve1}\\
\frac{\partial\vv_2}{\partial t}+\vv_0\cdot\nabla\vv_2=&-\vv_2\cdot\nabla\vv_0-\vv_1\cdot\nabla\vv_1-\nabla p_2+\nabla\cdot\Big(\beta_1\sum\limits_{i+j+k=2}Q_i(Q_j:\DD_k)\nonumber\\
&+\beta_4\DD_2+\beta_5(\DD_2\cdot Q_0+\DD_1\cdot Q_1+\DD_0\cdot Q_2)\nonumber\\
&+\beta_6(Q_0\cdot\DD_2+Q_1\cdot\DD_1+Q_2\cdot\DD_0)\nonumber\\
&+\beta_7\sum\limits_{i+j+k=2}\big(\DD_i\cdot Q_j\cdot Q_k+Q_i\cdot Q_j\cdot\DD_k\big)
+\frac{\mu_2}{2}\overline{\CN}_2\nonumber\\
&+\mu_1\big([Q_2,\CN_0]+[Q_1,\overline{\CN}_1]+[Q_0,\overline{\CN}_2]\big)\nonumber\\
&+\sigma^d(Q_2,Q_0)+\sigma^d(Q_1,Q_1)+\sigma^d(Q_0,Q_2)\Big),\label{eq:expansion2-ve2}\\
\nabla\cdot\vv_2=&0,\label{eq:expansion2-ve3}
\end{align}
where
\begin{align*}
\ddot{Q}_2=&(\partial_t+\vv_0\cdot\nabla)\dot{Q}_2,\quad \dot{Q}_2=(\partial_t+\vv_0\cdot\nabla)Q_2,\quad
\CN_2=\dot{Q}_2-[\BOm_0,Q_2],\\
\overline{\CN}_2=&\CN_2+\vv_2\cdot\nabla Q_0+\vv_1\cdot\nabla Q_1-[\BOm_2, Q_0]-[\BOm_1, Q_1].
\end{align*}

In the sequel, we will show how to solve $(Q_i, \vv_i)(0\le i\le 2)$ and $Q_3$. First of all, combining the equation (\ref{expan:Q0-1}) with Proposition \ref{critical-points}, we deduce that $Q_0$ is a critical point and
could be taken as
\begin{align}\label{Q-0}
Q_0(t,\xx)=s(\nn(t,\xx)\nn(t,\nn)-\frac13\II),
\end{align}
for some $\nn(t,\xx)\in\mathbb{S}^2$ and $s=s_1$.

\begin{proposition}\label{prop:EL}
Suppose that $(Q_0,\vv_0)$ is a smooth solution of the system (\ref{eq:expansion0-ve1})-(\ref{eq:expansion0-ve3}), then $(\nn, \vv_0)$ must be a
solution of the full inertial Ericksen-Leslie system (\ref{eq:EL-v})--(\ref{eq:EL-n}),
where the coefficients are determined by (\ref{leslie1-intro})-(\ref{OF-LD-relation-intro}).
\end{proposition}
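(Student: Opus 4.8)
The plan is to insert the ansatz $Q_0=s(\nn\nn-\f13\II)$, forced by (\ref{expan:Q0-1}) and Proposition \ref{critical-points}, into the three relations (\ref{eq:expansion0-ve1})--(\ref{eq:expansion0-ve3}) and check that they reduce exactly to (\ref{eq:EL-v})--(\ref{eq:EL-n}) with the coefficients (\ref{leslie1-intro})--(\ref{OF-LD-relation-intro}). Since (\ref{eq:expansion0-ve3}) is literally (\ref{eq:incompre}), only the director equation (\ref{eq:expansion0-ve1}) and the momentum equation (\ref{eq:expansion0-ve2}) require work.

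For the director equation I would first record the identities forced by $|\nn|\equiv1$, namely $\nn\cdot\dot\nn=0$ and $\nn\cdot\ddot\nn=-|\dot\nn|^2$, and compute $\dot Q_0=s(\dot\nn\nn+\nn\dot\nn)$, $\ddot Q_0=s(\ddot\nn\nn+\nn\ddot\nn+2\dot\nn\dot\nn)$, $[\BOm_0,Q_0]=s\big((\BOm_0\cdot\nn)\nn+\nn(\BOm_0\cdot\nn)\big)$, so that $\dot Q_0-[\BOm_0,Q_0]=s(\NN\nn+\nn\NN)$ with $\NN=\dot\nn-\BOm_0\cdot\nn$ orthogonal to $\nn$. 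The key move is to apply the projection $\MP^{in}$ onto $\mathrm{Ker}\MH_{\nn}$ to (\ref{eq:expansion0-ve1}): by Proposition \ref{linearized-oper-prop}(i) the only term in $\HH_0$ depending on the still-undetermined $Q_1$, namely $\MH_{\nn}(Q_1)$, lies in $(\mathrm{Ker}\MH_{\nn})^{\perp}$ and is killed, so the projected equation is closed in $\nn$ and $\vv_0$. Using that for a symmetric traceless $A$ one has $\MP^{in}(A)=0\iff\nn\times(A\cdot\nn)=0$ (immediate from (\ref{projection_in1})), the projected equation becomes $\nn\times\big(Js\,\ddot\nn+\mu_1 s\,\NN+\f{\mu_2}{2}\DD_0\cdot\nn+\ML(Q_0)\cdot\nn\big)=0$, which recovers (\ref{eq:EL-n}) (the overall scalar $\f1{2s}$ being immaterial) provided we set $I=2Js^2$, $\gamma_1=2\mu_1 s^2$, $\gamma_2=\mu_2 s$ and identify the Oseen--Frank molecular field by $\nn\times\hh=-2s\,\nn\times\big(\ML(Q_0)\cdot\nn\big)$.

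For the momentum equation I would substitute $Q_0=s(\nn\nn-\f13\II)$, together with $Q_0^2=s^2(\f13\nn\nn+\f19\II)$, $\dot Q_0-[\BOm_0,Q_0]=s(\NN\nn+\nn\NN)$ and $\big[Q_0,\dot Q_0-[\BOm_0,Q_0]\big]=s^2(\nn\NN-\NN\nn)$, into the viscous stress $\sigma_0$ in (\ref{eq:expansion0-ve2}) and expand term by term. Each contribution proportional to the identity $\II$ (these come from the traceless corrections in $Q_0$ and $Q_0^2$ and from $\beta_1Q_0(Q_0:\DD_0)$) has divergence equal to a gradient and is absorbed into a redefined pressure; collecting the surviving terms and matching with the Leslie stress $\sigma^L$ in (\ref{eq:Leslie stress}) — up to the transpose built into the convention $(\na\cdot\sigma)_i=\partial_j\sigma_{ji}$ used for $\sigma^d$ — yields precisely (\ref{leslie1-intro}), consistently with $\gamma_1,\gamma_2$ in (\ref{leslie2-intro}) and Parodi's relation. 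It then remains to show that $\na\cdot\sigma^d(Q_0,Q_0)$ equals, modulo a gradient, the Ericksen stress $\sigma^E=-\f{\partial E_F}{\partial(\na\nn)}\cdot(\na\nn)^T$ associated with the Oseen--Frank energy whose constants are (\ref{OF-LD-relation-intro}).

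The computational heart, and the main obstacle, is the elastic identification that underlies both the term $\ML(Q_0)$ in the director equation and the term $\sigma^d(Q_0,Q_0)$ in the momentum equation: namely, that under $Q_0=s(\nn\nn-\f13\II)$ the Landau--de Gennes elastic density $f_e(\na Q_0)$ coincides with the Oseen--Frank density $E_F(\nn,\na\nn)$ once $k_1=k_3=2(L_1+L_2+L_3)s^2$, $k_2=2L_1 s^2$, $k_4=L_3 s^2$, and hence that $\ML(Q_0)$ projects (via $\nn\times(\,\cdot\,\cdot\nn)$) onto $\hh$ and $\sigma^d(Q_0,Q_0)$ onto $\sigma^E$ up to gradients. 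This is a lengthy but mechanical tensor computation — differentiating $\nn\nn$, repeatedly using $|\nn|=1$ and the symmetry $Q_{kl,j}=Q_{lk,j}$ — of exactly the type carried out in \cite{WZZ3,LWZ}, which I would invoke and then verify the coefficient bookkeeping; the viscous-stress matching and the kernel projection of the director equation are by comparison routine.
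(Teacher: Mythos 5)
Your proposal is correct and follows essentially the same route as the paper: project the $O(1)$ director equation onto $\mathrm{Ker}\MH_{\nn}$ (the paper does this by pairing with $\nn\nn^{\perp}+\nn^{\perp}\nn$, which is equivalent to your $\MP^{in}$), using that $\MH_{\nn}(Q_1)\in(\mathrm{Ker}\MH_{\nn})^{\perp}$ drops out, and then expand $\sigma_0$ term by term with $Q_0=s(\nn\nn-\frac13\II)$, absorbing $\II$-proportional terms into the pressure to read off (\ref{leslie1-intro}). The elastic identification $\sigma^E=\sigma^d(Q_0,Q_0)$ and the matching of $\ML(Q_0)$ with $\hh$ are likewise delegated to \cite{WZZ3} in the paper (Lemma 3.5 there), exactly as you propose.
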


\begin{proof}
Recalling the first property $\MH_{\nn}(Q_1)\in(\mathrm{Ker}\MH_\nn)^\bot$ in Proposition \ref{linearized-oper-prop}, we can deduce from the equation (\ref{eq:expansion0-ve1}) that
\begin{align}\label{Qvnv}
\Big(J\ddot{Q}_0+\mu_1\CN_0+\ML(Q_0)+\frac{\mu_2}{2}\DD_0\Big)
:(\nn\nn^{\perp}+\nn^{\perp}\nn)=0.
\end{align}
Substituting (\ref{Q-0}) into (\ref{Qvnv}), we get by a subtle calculation that
\begin{align*}
\ddot{Q}_0:(\nn\nn^{\perp}+\nn^{\perp}\nn)=&s(\ddot{\nn}\nn+2\dot{\nn}\dot{\nn}+\nn\ddot{\nn}):(\nn\nn^{\perp}+\nn^{\perp}\nn)\\
=&2s\ddot{\nn}\cdot\nn^{\perp},\\
\CN_0:(\nn\nn^{\perp}+\nn^{\perp}\nn)=&[s(\dot{\nn}\nn+\nn\dot{\nn})+s(\nn\nn\cdot\BOm_0-\BOm_0\cdot\nn\nn)]
:(\nn\nn^{\perp}+\nn^{\perp}\nn)\\
=&2s\NN\cdot\nn^{\perp},\\
\ML(Q_0):(\nn\nn^{\perp}+\nn^{\perp}\nn)=&-\frac{1}{s}\hh\cdot\nn^{\perp},\\
\DD_0:(\nn\nn^{\perp}+\nn^{\perp}\nn)=&2(\DD_0\cdot\nn)\cdot\nn^{\perp},
\end{align*}
from which it follows that
\begin{align*}
\nn^{\perp}\cdot\Big(2s^2J\ddot{\nn}+2s^2\mu_1\NN-\hh+s\mu_2\DD_0\cdot\nn\Big)=0,
\end{align*}
which implies
\begin{align}\label{eq:nn}
\nn\times\Big(I\ddot{\nn}-\hh+\gamma_1\NN+\gamma_2\DD_0\cdot\nn\Big)=0,
\end{align}
where
\begin{align*}
I=2s^2J,~~\gamma_1=2s^2\mu_1,~~\gamma_2=s\mu_2.
\end{align*}

Applying the definition of $\mathrm{Ker}\MH_\nn$ and (\ref{Q-0}) yields
\begin{align*}
\CN_0=&\frac{\partial Q_0}{\partial t}+\vv_0\cdot\nabla Q_0+Q_0\cdot\BOm_0-\BOm_0\cdot Q_0\\
=&s(\nn\NN+\NN\nn)\in\mathrm{Ker}\MH_\nn.
\end{align*}
Consequently, we have
\begin{align*}
\sigma_0\eqdefa&\beta_1Q_0(Q_0:\DD_0)+\beta_4\DD_0+\beta_5\DD_0\cdot Q_0+\beta_6Q_0\cdot\DD_0\\
&+\beta_7(\DD_0\cdot Q^2_0+Q^2_0\cdot\DD_0)
+\frac{\mu_2}{2}\CN_0+\mu_1(Q_0\cdot\CN_0-\CN_0\cdot Q_0)\\
=&\beta_1s^2(\nn\nn:\DD_0)\nn\nn-\frac13\beta_1s^2(\nn\nn:\DD_0)\II+\beta_4\DD_0+\beta_5s\DD_0\cdot\nn\nn\\
&+\beta_6s\nn\nn\cdot\DD_0-\frac13(\beta_5+\beta_6)s\DD_0
+\frac13\beta_7s^2(\DD_0\cdot\nn\nn+\nn\nn\cdot\DD_0)\\
&+\frac29\beta_7s^2\DD_0+\frac12\mu_2s(\nn\NN+\NN\nn)+\mu_1s^2(\nn\NN-\NN\nn)\\
=&\beta_1s^2(\nn\nn:\DD_0)\nn\nn+(\frac12\mu_2s-\mu_1s^2)\NN\nn+(\frac12\mu_2s+\mu_1s^2)\nn\NN\\
&+\big(\beta_4-\frac13(\beta_5+\beta_6)s+\frac29\beta_7s^2\big)\DD_0
+(\beta_5s+\frac13\beta_7s^2)\nn\nn\cdot\DD_0\\
&+(\beta_6s+\frac13\beta_7s^2)\DD_0\cdot\nn\nn
+\text{pressure terms}\\
=&\sigma^L+\text{pressure terms}.
\end{align*}
In addition, from Lemma 3.5 in \cite{WZZ3} we know that $\sigma^E=\sigma^d(Q_0,Q_0)$.
Here $\sigma^E$ and $\sigma^L$ (see (\ref{eq:Leslie stress}) and (\ref{eq:Ericksen})) are just the elastic stress and the viscous stress in the full inertial Ericksen-Leslie system, respectively. This completes the proof of Proposition \ref{prop:EL}.
\end{proof}

\subsection{Existence of the Hilbert expansion}
In this subsection, we are going to elucidate the existence of the Hilbert expansion. In other words, we will show how to solve $(Q_1,\vv_1)$ and $Q_3$ from the system (\ref{eq:expansion1-ve1})-(\ref{eq:expansion2-ve3}). To be more specific, we have the following Proposition \ref{prop:Hilbert}.
\begin{proposition}\label{prop:Hilbert}
Let $(\nn,\vv_0)$ be a smooth solution of the full inertial Ericksen-Leslie system (\ref{eq:EL-v})-(\ref{eq:EL-n}) on $[0,T]$ and satisfy
\begin{align*}
(\vv_0, \partial_t\nn, \nabla\nn)\in L^{\infty}([0,T];H^{k})\quad \textrm{for} \quad k\ge 20.
\end{align*}
Then there exists the solution $(Q_i,\vv_i)(i=0,1,2)$ and $Q_3\in(\mathrm{Ker}\MH_{\nn})^{\perp}$ of the system
(\ref{eq:expansion1-ve1})-(\ref{eq:expansion2-ve3}) satisfying
\begin{align*}
(\vv_i,\partial_tQ_i,\nabla Q_i)\in L^{\infty}([0,T];H^{k-4i})(i=0,1,2),\quad Q_3\in L^{\infty}([0,T];H^{k-11}).
\end{align*}
\end{proposition}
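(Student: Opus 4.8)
The plan is to build $(Q_i,\vv_i)$, $0\le i\le 2$, and $Q_3$ inductively in the order $i$, using at every step three ingredients already at hand: the classification of $Q_0$ (Proposition~\ref{critical-points}), the coercivity of $\MH_\nn$ on $(\mathrm{Ker}\MH_\nn)^{\perp}$ and the explicit inverse (\ref{HM-inverse}) (Proposition~\ref{linearized-oper-prop}), and Proposition~\ref{prop:EL} read in reverse: since $(\nn,\vv_0)$ solves (\ref{eq:EL-v})--(\ref{eq:EL-n}) with the coefficients (\ref{leslie1-intro})--(\ref{OF-LD-relation-intro}), the pair $\big(Q_0:=s(\nn\nn-\frac13\II),\ \vv_0\big)$ satisfies (\ref{eq:expansion0-ve2}), (\ref{eq:expansion0-ve3}) and the $\MP^{in}$-projection of (\ref{eq:expansion0-ve1}) (the computation in the proof of Proposition~\ref{prop:EL} shows that this projection is exactly (\ref{eq:EL-n})). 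Two structural features keep the induction manageable: at each order the resulting equations are \emph{linear} in the unknown of that order, because the projection $\MP^{in}$ annihilates the purely self-interacting quadratic contributions hidden in $\BB_1$, while all other sources involve only lower-order profiles and $(\nn,\vv_0)$; and, being linear, these equations carry no small-time restriction, so the energy estimates below yield solutions on the whole interval $[0,T]$.

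\emph{The algebraic half-step.} Suppose $(Q_j,\vv_j)_{j<i}$ and, for $i\ge1$, the kernel part $\MP^{in}(Q_i)=\nn\ww_i+\ww_i\nn$ (with $\ww_i\cdot\nn=0$) are already known. Applying $\MP^{out}$ to the $O(\ve^i)$ equation for $Q$ turns it into $\MH_\nn(Q_{i+1})=\MP^{out}(\text{known})$, since $\MP^{out}\MH_\nn(Q_{i+1})=\MH_\nn(Q_{i+1})\in(\mathrm{Ker}\MH_\nn)^{\perp}$; the pointwise inversion (\ref{HM-inverse}) then gives $\MP^{out}(Q_{i+1})$ with no loss of derivatives. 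Concretely
\[
\MP^{out}(Q_1)=-\MH_\nn^{-1}\MP^{out}\!\Big(J\ddot Q_0+\mu_1\dot Q_0+\ML(Q_0)+\tfrac{\mu_2}{2}\DD_0-\mu_1[\BOm_0,Q_0]\Big),
\]
and analogously $\MP^{out}(Q_2)$ and $\MP^{out}(Q_3)$ are read off from $\MP^{out}$ of (\ref{eq:expansion1-ve1}) and (\ref{eq:expansion2-ve1}); here the only source of regularity loss is that a second material time derivative $\ddot Q_{i-1}$ is converted into at most two spatial derivatives via the $Q_{i-1}$-evolution equation. Since the proposition only requires $Q_3\in(\mathrm{Ker}\MH_\nn)^{\perp}$, this already settles $Q_3$, whose Sobolev class is that of the right-hand side of $\MP^{out}$ of (\ref{eq:expansion2-ve1}).

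\emph{The evolution half-step.} For $i=1,2$, substitute $Q_i=\nn\ww_i+\ww_i\nn+\MP^{out}(Q_i)$ into the $O(\ve^i)$ system. The $\MP^{in}$-projection of the $Q_i$-equation annihilates $\MH_\nn(Q_{i+1})$ and reduces to a linear damped wave equation for the director fluctuation $\ww_i$ (hyperbolic because of $J\ddot Q_i$, damped by $\mu_1\dot Q_i$, with elliptic spatial part coming from $\MP^{in}\ML(\nn\ww_i+\ww_i\nn)$), while the $\vv_i$-momentum equation, after using $\nabla\cdot\vv_i=0$, is a linear parabolic equation whose principal part is the Ericksen--Leslie viscous operator built from $\beta_4,\beta_5,\beta_6,\beta_7,\mu_1,\mu_2$ and $Q_0$; the two couple through lower-order terms in $\DD_i,\BOm_i,\dot Q_i$ and the small $J$-weighted terms carrying $\partial_t\vv_i$. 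I would solve this coupled wave--parabolic system by a direct energy argument: propagate a weighted norm of the form $\|\vv_i\|_{H^s}^2+J\|\partial_t\ww_i\|_{H^s}^2+\mathcal{E}^{(s)}_{\mathrm{el}}[\ww_i]$, with $\mathcal{E}^{(s)}_{\mathrm{el}}$ the Oseen--Frank-type quadratic form of $\ww_i$ up to order $s$, which is coercive because $L_1>0$ and $L_1+L_2+L_3>0$ (cf. (\ref{L:postive})); the parabolic dissipation of $\vv_i$ and the absorption of the mixed couplings by Cauchy--Schwarz and Gr\"onwall rest on (\ref{viscosity-cond}), notably $\beta_4,\mu_1>0$ and $\beta_4-\frac{\mu_2^2}{4\mu_1}>0$, while the sources are bounded using the inductive regularity. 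Standard existence theory for linear wave--parabolic systems (Friedrichs mollification or a parabolic regularization) upgrades this a priori bound to a solution on $[0,T]$. Tracking the losses — two in reaching $\MP^{out}(Q_i)$ through $\ddot Q_{i-1}$, two more in solving the $\ww_i$-equation whose forcing contains $\ML(\MP^{out}Q_i)$ — gives $(\vv_i,\partial_t Q_i,\nabla Q_i)\in L^{\infty}([0,T];H^{k-4i})$, and one further algebraic step yields $Q_3\in L^{\infty}([0,T];H^{k-11})$; taking $k\ge20$ keeps every index at least $k-11\ge9$, well above the thresholds needed for the products, the algebra property, and the $\nn$-dependent pointwise maps $\MH_\nn^{-1},\MP^{in},\MP^{out}$.

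\emph{Main obstacle.} The genuinely delicate point is closing the energy estimate for the coupled system at each order. In contrast with the non-inertial case, the director fluctuation solves a \emph{wave} equation (due to $J\ddot Q_i$) coupled to a \emph{parabolic} velocity equation, so the weights in the energy must be tuned so that the elastic quadratic form stays coercive while the viscous dissipation dominates the cross terms — in particular the term $\partial_t\vv_i\cdot\nabla Q_0$ sitting inside the $J$-corrections of the $Q_i$-equation, which is of the same differential order as the leading time derivative in the velocity equation. This is a strictly simpler, linear rehearsal of the remainder estimate of Sections~3--4, and the hypotheses (\ref{viscosity-cond}) and (\ref{L:postive}) are exactly what make it close.
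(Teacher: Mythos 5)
Your proposal follows essentially the same route as the paper: split each $Q_i$ into kernel and orthogonal parts, determine the orthogonal parts algebraically by inverting $\MH_{\nn}$ via (\ref{HM-inverse}), observe that $\widehat{\BB}_1(Q^{\top}_1,Q^{\top}_1)\in(\mathrm{Ker}\MH_{\nn})^{\perp}$ so the $\MP^{in}$-projected system for $(Q^{\top}_i,\vv_i)$ is linear and dissipative, and close a weighted wave--parabolic energy estimate using Lemma \ref{diss:relation}. The obstacle you flag ($J\,\partial_t\vv_1\cdot\nabla Q_0$) is handled in the paper exactly along the lines you indicate: integrate by parts in $t$ and augment the energy with the cross term $J\langle\vv_1\cdot\nabla Q_0,\MP^{in}(\dot{Q}^{\top}_1)\rangle$, whose coercivity is checked by completing the square.
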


Before proving Proposition \ref{prop:Hilbert}, we need the following Lemma \ref{diss:relation} from \cite{WZZ2} and Lemma \ref{lem:s5-1}.
\begin{lemma}\label{diss:relation}
The following dissipation relation holds
\begin{align}\label{diss:ineq}
\hat{\beta}_1|\nn\nn:\DD|^2+\hat{\beta}_2|\DD|^2+\hat{\beta}_3|\nn\cdot\DD|^2\geqq 0
\end{align}
for any symmetric traceless matric $\DD$ and unit vector $\nn$, if and only if
\begin{align}\label{diss:coeff}
\hat{\beta}_2\geqq 0,\quad 2\hat{\beta}_2+\hat{\beta}_3\geqq 0,\quad \frac32\hat{\beta}_2+\hat{\beta}_3+\hat{\beta}_1\geqq 0.
\end{align}
\end{lemma}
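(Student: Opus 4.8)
The statement is a purely algebraic (finite-dimensional) fact about symmetric traceless $3\times 3$ matrices $\DD$ and unit vectors $\nn$, so the plan is to reduce it to a quadratic inequality in a bounded number of real parameters by using rotational invariance. First I would observe that both the quadratic form $\hat\beta_1|\nn\nn:\DD|^2+\hat\beta_2|\DD|^2+\hat\beta_3|\nn\cdot\DD|^2$ and the constraints on $\DD$ are invariant under the simultaneous rotation $\DD\mapsto R\DD R^T$, $\nn\mapsto R\nn$. Hence, without loss of generality, fix $\nn=\ee_3=(0,0,1)^T$. Then $\nn\nn:\DD = D_{33}$, $|\nn\cdot\DD|^2 = D_{13}^2+D_{23}^2+D_{33}^2$, and $|\DD|^2 = \sum_{i,j}D_{ij}^2$, with the single linear constraint $D_{11}+D_{22}+D_{33}=0$. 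So the inequality to be analyzed becomes
\[
\hat\beta_2\bigl(D_{11}^2+D_{22}^2+D_{33}^2+2D_{12}^2+2D_{13}^2+2D_{23}^2\bigr)
+\hat\beta_1 D_{33}^2+\hat\beta_3\bigl(D_{13}^2+D_{23}^2+D_{33}^2\bigr)\ge 0 .
\]

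The next step is to decouple the variables. The off-diagonal entries $D_{12},D_{13},D_{23}$ and the diagonal entries appear in disjoint groups, so the form is nonnegative iff each group contributes nonnegatively on its own constraint set. The $D_{12}$ term forces $2\hat\beta_2\ge 0$, i.e. $\hat\beta_2\ge 0$; the $D_{13},D_{23}$ terms force $2\hat\beta_2+\hat\beta_3\ge 0$; and the diagonal part, subject to $D_{11}+D_{22}+D_{33}=0$, reduces to a two-variable quadratic form in, say, $D_{11},D_{33}$ (eliminating $D_{22}=-D_{11}-D_{33}$). Writing that $2\times 2$ form explicitly and demanding positive semidefiniteness — nonnegative diagonal entries and nonnegative determinant — gives $\hat\beta_2\ge 0$ again together with the condition $\tfrac32\hat\beta_2+\hat\beta_3+\hat\beta_1\ge 0$. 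Conversely, if (2.6) holds, summing these three nonnegative contributions shows the form is nonnegative for every admissible $\DD$, establishing the equivalence. The ``only if'' direction is immediate by plugging in the specific test matrices that isolate each group (e.g. $\DD$ with a single nonzero off-diagonal pair, or diagonal $\DD=\mathrm{diag}(1,1,-2)$ to probe the $\hat\beta_1$–$\hat\beta_3$ combination).

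The main obstacle — really the only nontrivial point — is the bookkeeping in diagonalizing the diagonal block: one must correctly eliminate the trace constraint and compute the discriminant of the resulting binary quadratic form so that the third inequality in (2.6) comes out with the exact coefficients $\tfrac32\hat\beta_2+\hat\beta_3+\hat\beta_1$. A convenient way to organize this is to choose the test vector $\DD = \mathrm{diag}(1,1,-2)$ for the sharp diagonal case, for which $|\DD|^2=6$, $\nn\nn:\DD = -2$, $|\nn\cdot\DD|^2 = 4$, giving $4\hat\beta_1+6\hat\beta_2+4\hat\beta_3\ge 0$, i.e. exactly $2(\,2\hat\beta_1+3\hat\beta_2+2\hat\beta_3)\ge 0$; combined with the general decoupling argument this pins down all three conditions. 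Since this lemma is quoted verbatim from \cite{WZZ2}, I would in practice simply cite it, but the self-contained argument above is the one I would write out.
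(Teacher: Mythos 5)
Your proposal is correct. Note that the paper itself gives no proof of this lemma: it is quoted from \cite{WZZ2} and used as a black box, so there is no in-paper argument to compare against. Your reduction is the natural one and the arithmetic checks out: with $\nn=\ee_3$ the form splits into the decoupled blocks $2\hat{\beta}_2D_{12}^2$, $(2\hat{\beta}_2+\hat{\beta}_3)(D_{13}^2+D_{23}^2)$, and the diagonal block $\hat{\beta}_2(D_{11}^2+D_{22}^2+D_{33}^2)+(\hat{\beta}_1+\hat{\beta}_3)D_{33}^2$ on the trace-zero plane; eliminating $D_{22}$ gives the binary form $2\hat{\beta}_2 x^2+2\hat{\beta}_2 xy+(2\hat{\beta}_2+\hat{\beta}_1+\hat{\beta}_3)y^2$ whose determinant is $2\hat{\beta}_2\big(\tfrac32\hat{\beta}_2+\hat{\beta}_1+\hat{\beta}_3\big)$, and your test matrix $\mathrm{diag}(1,1,-2)$ recovers exactly the third condition. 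The only spot to write out carefully is the equivalence of ``diagonal block PSD'' with the clean pair $\hat{\beta}_2\geq0$, $\tfrac32\hat{\beta}_2+\hat{\beta}_1+\hat{\beta}_3\geq0$: the determinant condition degenerates to $0\geq0$ when $\hat{\beta}_2=0$, in which case one must instead invoke the nonnegativity of the $y^2$-coefficient $2\hat{\beta}_2+\hat{\beta}_1+\hat{\beta}_3$ (which for $\hat{\beta}_2\geq0$ is implied by the third condition of (\ref{diss:coeff}), since it equals $\tfrac32\hat{\beta}_2+\hat{\beta}_1+\hat{\beta}_3$ plus $\tfrac12\hat{\beta}_2$). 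This is a one-line remark, not a gap.
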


\begin{lemma}\label{lem:s5-1}
Assume that $Q_1=Q^{\top}_1+Q^{\bot}_1$ with $Q^{\top}_1\in\text{Ker}\MH_{\nn}$ and $Q^{\bot}_1\in(\text{Ker}\MH_{\nn})^{\perp}$. Then it follows that
\begin{align}
\MP^{out}(\dot{Q}_1)=&L(Q^{\top}_1)+R,\quad
\MP^{in}(\dot{Q}_1)=\dot{Q}^{\top}_1+L(Q^{\top}_1)+R,\label{mp-dQ1}\\
\MP^{out}(\ddot{Q}_1)=&L(\dot{Q}^{\top}_1)+L(Q^{\top}_1)+R,\quad
\MP^{in}(\ddot{Q}_1)=\ddot{Q}^{\top}_1+L(\dot{Q}^{\top}_1)+L(Q^{\top}_1)+R,\label{mp-ddQ1}
\end{align}
where $\dot{Q}^{\top}_1\eqdefa(\partial_t+\vv_0\cdot\nabla)Q^{\top}_1$ and
$\ddot{Q}^{\top}_1\eqdefa(\partial_t+\vv_0\cdot\nabla)\dot{Q}^{\top}_1$. In addition, $L(\cdot)$ represents the linear function with the coefficients belonging to $L^{\infty}([0,T];H^{k-1})$ and $R\in L^{\infty}([0,T];H^{k-3})$ some function depending only on $\nn, \vv_0, Q^{\bot}_1$.
\end{lemma}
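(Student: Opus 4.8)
The plan is to lean on one structural observation: by (\ref{projection_in1})--(\ref{projection_out1}) the projections $\MP^{in}$ and $\MP^{out}$ are linear in their tensorial argument, with coefficients that are polynomials in $\nn$. They therefore fail to commute with the material derivative $\mathrm{D}:=\partial_t+\vv_0\cdot\nabla$ only by a term that is again linear in the argument but carries one extra factor of $\dot\nn=\mathrm{D}\nn$; this discrepancy is the sole source of the $L(\cdot)$ terms in the statement. Everything then reduces to the product rule together with $\MP^{in}+\MP^{out}=\mathrm{Id}$.

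\emph{First-order identities.} Since $Q_1^{\top}\in\mathrm{Ker}\,\MH_\nn$ we have $\MP^{out}(Q_1^{\top})\equiv 0$. Applying $\mathrm{D}$ and differentiating the $\nn$-polynomial coefficients of $\MP^{out}$ by the product rule gives
\[
\MP^{out}(\dot{Q}_1^{\top})=-\big(\mathrm{D}\,\MP^{out}\big)(Q_1^{\top})=:L(Q_1^{\top}),
\]
a linear function of $Q_1^{\top}$ whose coefficients are polynomials in $\nn$ times $\dot\nn$; the Sobolev algebra property together with the hypotheses on $(\vv_0,\partial_t\nn,\nabla\nn)$ places them in $L^{\infty}([0,T];H^{k-1})$. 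On the other hand $\MP^{out}(\dot{Q}_1^{\bot})$ and $\MP^{in}(\dot{Q}_1^{\bot})$ are built only from $\nn,\vv_0,Q_1^{\bot}$ and their first derivatives, so I collect them into a remainder $R\in L^{\infty}([0,T];H^{k-3})$. Writing $\dot{Q}_1=\dot{Q}_1^{\top}+\dot{Q}_1^{\bot}$ and applying $\MP^{out}$ yields the first identity in (\ref{mp-dQ1}); the second follows from $\MP^{in}=\mathrm{Id}-\MP^{out}$, noting that $\dot{Q}_1^{\bot}$ itself belongs to the $R$-class and that the sign of $L$ is immaterial.

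\emph{Second-order identities.} Iterate the same device. Set $W:=\MP^{in}(\dot{Q}_1^{\top})\in\mathrm{Ker}\,\MH_\nn$, so that $\dot{Q}_1^{\top}=W+L(Q_1^{\top})$. Since $W$ lies in the kernel, the same commutator computation gives $\MP^{out}(\mathrm{D}W)=L(W)$, and moreover $\mathrm{D}\big(L(Q_1^{\top})\big)=L(\dot{Q}_1^{\top})+L(Q_1^{\top})$ (the material derivative either hits the coefficients, at the cost of one derivative, or hits $Q_1^{\top}$). Hence
\[
\MP^{out}(\ddot{Q}_1^{\top})=L(W)+L(\dot{Q}_1^{\top})+L(Q_1^{\top})=L(\dot{Q}_1^{\top})+L(Q_1^{\top}),
\]
after substituting $W=\dot{Q}_1^{\top}-L(Q_1^{\top})$. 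Adding $\MP^{out}(\ddot{Q}_1^{\bot})$, which belongs to the $R$-class, gives the first identity in (\ref{mp-ddQ1}); the second again follows by $\MP^{in}=\mathrm{Id}-\MP^{out}$ after absorbing $\ddot{Q}_1^{\bot}$ into $R$.

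The only part that is load-bearing rather than purely formal is the derivative accounting: each application of $\mathrm{D}$ to a coefficient costs one derivative ($\dot\nn$ versus $\nn$, $\dot{Q}_1^{\bot}$ versus $Q_1^{\bot}$, two derivatives for $\ddot{Q}_1^{\bot}$), and every product must be absorbed via the algebra property of $H^{k-j}$, which is available because $k\ge 20$. Carrying this through delivers coefficients of $L$ in $L^{\infty}([0,T];H^{k-1})$ and $R\in L^{\infty}([0,T];H^{k-3})$, exactly as asserted; no estimate beyond the product rule and $\MP^{in}+\MP^{out}=\mathrm{Id}$ is required.
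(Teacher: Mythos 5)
Your argument is correct, and it reaches the conclusion by a genuinely different route than the paper. The paper's proof is computational: it cites \cite{WZZ3} for the first-order identities (\ref{mp-dQ1}) and, for (\ref{mp-ddQ1}), writes $Q^{\top}_1=\nn\nn^{\bot}+\nn^{\bot}\nn$, differentiates twice, exploits the orthogonality relations $\nn^{\perp}\cdot\nn=\dot{\nn}\cdot\nn=0$ and their material derivatives, and identifies $\MP^{in}(\ddot{Q}^{\top}_1)$ componentwise via $(\II-\nn\nn)\cdot\ddot{Q}^{\top}_1\cdot\nn$. You instead exploit the single structural fact that $\MP^{out}$ annihilates $\mathrm{Ker}\MH_{\nn}$, so differentiating the identity $\MP^{out}(Q^{\top}_1)=0$ along $\partial_t+\vv_0\cdot\nabla$ and using the product rule on the $\nn$-polynomial coefficients of $\MP^{out}$ yields $\MP^{out}(\dot{Q}^{\top}_1)=L(Q^{\top}_1)$ directly; iterating on $W=\MP^{in}(\dot{Q}^{\top}_1)\in\mathrm{Ker}\MH_{\nn}$ gives the second-order statement, and $\MP^{in}=\mathrm{Id}-\MP^{out}$ converts each $\MP^{out}$ identity into the corresponding $\MP^{in}$ one. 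This is cleaner and self-contained (it also covers (\ref{mp-dQ1}), which the paper does not reprove), at the cost of not producing the explicit form of the $L$-terms that the paper's computation exhibits; since the lemma is only ever used through the abstract classes $L(\cdot)$ and $R$, nothing is lost. Two small points of hygiene: in the display for $\MP^{out}(\ddot{Q}^{\top}_1)$ you should formally note that $\MP^{out}\big(L(\dot{Q}^{\top}_1)+L(Q^{\top}_1)\big)$ is again of the form $L(\dot{Q}^{\top}_1)+L(Q^{\top}_1)$ (composition of such linear maps preserves the class), and the derivative bookkeeping for $R$ --- that $\ddot{Q}^{\bot}_1$ still lands in $H^{k-3}$ --- is asserted rather than checked, but the paper is equally terse on that point.
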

\begin{proof}
The proof of (\ref{mp-dQ1}) see \cite{WZZ3} for the details. It remains to prove (\ref{mp-ddQ1}).
Let $Q^{\top}_1=\nn\nn^{\bot}+\nn^{\bot}\nn$ with $\nn^{\bot}\cdot\nn=0$, then it follows that
\begin{align*}
\dot{Q}^{\top}_1=&\nn\dot{\nn}^{\bot}+\dot{\nn}\nn^{\bot}+\dot{\nn}^{\bot}\nn+\nn^{\bot}\dot{\nn},\\
\ddot{Q}^{\top}_1=&2(\dot{\nn}\dot{\nn}^{\bot}+\dot{\nn}^{\bot}\dot{\nn})
+\nn\ddot{\nn}^{\bot}+\ddot{\nn}^{\bot}\nn+\ddot{\nn}\nn^{\bot}+\nn^{\bot}\ddot{\nn},
\end{align*}
where $\dot{\nn}=(\partial_t+\vv_0\cdot\nabla)\nn$ and $\dot{\nn}^{\bot}=(\partial_t+\vv_0\cdot\nabla)\nn^{\bot}$.
Note that
\begin{align*}
&\nn^{\bot}\cdot\nn=\dot{\nn}\cdot\nn=0,\quad\dot{\overline{(\nn^{\bot}\cdot\nn)}}=\dot{\nn}^{\bot}\cdot\nn+\nn^{\bot}\cdot\dot{\nn}=0,\\
&\ddot{\overline{(\nn^{\bot}\cdot\nn)}}=\ddot{\nn}^{\bot}\cdot\nn
+2\dot{\nn}^{\bot}\cdot\dot{\nn}+\nn^{\bot}\cdot\ddot{\nn}=0.
\end{align*}
By a simple computation, we have
\begin{align*}
(\II-\nn\nn)\cdot\ddot{Q}^{\top}_1\cdot\nn
&=(\delta_{ij}-n_in_j)\Big(2(\dot{n}_j\dot{n}^{\bot}_k+\dot{n}^{\bot}_j\dot{n}_k)+n_j\ddot{n}^{\bot}_k+\ddot{n}^{\bot}_jn_k
+\ddot{n}_jn^{\bot}_k+n^{\bot}_j\ddot{n}_k\Big)n_k\\
&=2\dot{n}_i\dot{n}^{\bot}_kn_k+\ddot{n}^{\bot}_i+n^{\bot}_i\ddot{n}_kn_k-n_i\ddot{n}^{\bot}_kn_k\\
&=\ddot{n}^{\bot}_i+(n_in^{\bot}_k+n^{\bot}_in_k)\ddot{n}_k-2\dot{n}_in^{\bot}_k\dot{n}_k+2n_i\dot{n}^{\bot}_k\dot{n}_k.
\end{align*}
Consequently, using the fact $\nn^{\bot}=Q^{\top}_1\cdot\nn$ and the definition of the projection operator $\MP^{in}$, we obtain
\begin{align*}
\MP^{in}(\ddot{Q}^{\top}_1)=&\nn\big((\II-\nn\nn)\cdot\ddot{Q}^{\top}_1\cdot\nn\big)
+\big((\II-\nn\nn)\cdot\ddot{Q}^{\top}_1\cdot\nn\big)\nn\\
=&\nn\ddot{\nn}^{\bot}+\ddot{\nn}^{\bot}\nn+2(\dot{Q}^{\top}_1:\nn\dot{\nn})\nn\nn+L(Q^{\top}_1),
\end{align*}
from which it yields that
\begin{align*}
\MP^{out}(\ddot{Q}_1)=&\MP^{out}(\ddot{Q}^{\top}_1)+R\\
=&2(\dot{\nn}\nn\cdot\dot{Q}^{\top}_1+\dot{Q}^{\top}_1\cdot\nn\dot{\nn})
+2(\dot{Q}^{\top}_1:\nn\dot{\nn})\nn\nn+L(Q^{\top}_1)+R\\
=&L(\dot{Q}^{\top}_1)+L(Q^{\top}_1)+R.
\end{align*}
Therefore, we can deduce that
\begin{align*}
\MP^{in}(\ddot{Q}_1)=&\ddot{Q}_1-\MP^{out}(\ddot{Q}_1)=\ddot{Q}^{\top}_1+L(\dot{Q}^{\top}_1)+L(Q^{\top}_1)+R.
\end{align*}
\end{proof}

\noindent{\bf Proof of Proposition \ref{prop:Hilbert}}. Suppose that $(\nn,\vv_0)$ is a smooth solution of the full inertial Ericksen-Leslie model (\ref{eq:EL-v})-(\ref{eq:EL-n}) on $[0,T]$ such that
\[
 (\vv_0, \partial_t\nn, \nabla\nn)\in L^{\infty}([0,T];H^k), ~\text{for}~ k\geq20.
\]
Thanks to $Q_0=s(\nn(t,\xx)\nn(t,\xx)-\frac13\II)$, we know $Q_0\in L^{\infty}([0,T],H^{k+1})$.
Note that we could solve $Q^{\perp}_1$ from (\ref{eq:expansion0-ve1}), and  easily get
$Q^{\perp}_1\in L^{\infty}([0,T];H^{k-1})$ by Proposition \ref{linearized-oper-prop}.
Thus, the existence of $(Q_1,\vv_1)$ can be reduced to solving $(Q^{\top}_1,\vv_1)$.

The key observation is that $(Q^{\top}_1,\vv_1)$ satisfies a linear dissipative system, although the system seems
 nonlinear at a first glance due to the term $\HH_1$ in (\ref{eq:expansion1-ve1}) which contains $\BB_1$.
 For this end, we derive the linear system of $(\vv_1,Q^{\top}_1)$. We denote
\begin{align*}
\widehat{\BB}_1(Q,\overline{Q})=-b\big(Q\cdot\overline{Q}-\frac13(Q:\overline{Q}\II)\big)
+c\big(2(Q:Q_0)\overline{Q}+(Q:\overline{Q})Q_0\big).
\end{align*}
Thus we have
\begin{align*}
\BB_1=&\widehat{\BB}_1(Q_1,Q_1)=\widehat{\BB}_1(Q^{\top}_1,Q^{\top}_1)+\widehat{\BB}_1(Q^{\top}_1,Q^{\bot}_1)
+\widehat{\BB}_1(Q^{\bot}_1,Q^{\top}_1)+\widehat{\BB}_1(Q^{\bot}_1,Q^{\bot}_1)\\
=&\widehat{\BB}_1(Q^{\top}_1,Q^{\top}_1)+L(Q^{\top}_1,\vv_1).
\end{align*}
By a simple calculation we get
\begin{align}\label{BB1}
\widehat{\BB}_1(Q^{\top}_1,Q^{\top}_1)\in(\text{Ker}\MH_{\nn})^{\perp}.
\end{align}

We denote
\begin{align*}
&\MA=\MP^{in}(\ML(Q^{\top}_1)),\quad \MC_1=\MP^{in}([\BOm_1, Q_0]),\quad
\MD_1=\MP^{in}(\DD_1),\\
&\MU=\MP^{in}(\dot{\vv}_1\cdot\nabla Q_0),\quad\MC_2=\MP^{out}([\BOm_1, Q_0]),\quad \MD_2=\MP^{out}(\DD_1).
\end{align*}
Taking the projection $\MP^{in}$ on both sides of the equation (\ref{eq:expansion1-ve1}), notice that $\MH_{\nn}(Q_2)\in (\text{Ker}\MH_{\nn})^{\perp}$ and $\ML(Q_1)=\ML(Q^{\top}_1)+R$, from Lemma \ref{lem:s5-1} and (\ref{BB1}) we obtain that
\begin{align*}
J\ddot{Q}^{\top}_1+\mu_1\dot{Q}^{\top}_1=&-\MA-\frac{\mu_2}{2}\MD_1+\mu_1\MC_1-J\MU+L(\dot{Q}^{\top}_1)+L(Q^{\top}_1,\vv_1)+R.
\end{align*}
Note that, due to (\ref{BB1}), the nonlinear term $\widehat{\BB}_1(Q^{\top}_1,Q^{\top}_1)$ vanishes in the above equation.

Thus, we have the following closed linear system of $(Q^{\top}_1,\vv_1)$:
\begin{align}
J\ddot{Q}^{\top}_1+\mu_1\dot{Q}^{\top}_1=&-\MA-\frac{\mu_2}{2}\MD_1+\mu_1\MC_1-J\MU+L(\dot{Q}^{\top}_1)+L(Q^{\top}_1,\vv_1)+R,\label{hilbert-ex1}\\
\frac{\partial\vv_1}{\partial t}+\vv_0\cdot\nabla\vv_1=&-\nabla p_1+\nabla\cdot\Big(\beta_1Q_0(Q_0:\DD_1)
+\beta_4\DD_1+\beta_5\DD_1\cdot Q_0\nonumber\\
&+\beta_6Q_0\cdot\DD_1+\beta_7(\DD_1\cdot Q^2_0+Q^2_0\cdot\DD_1)\nonumber\\
&+\frac{\mu_2}{2}(\dot{Q}^{\top}_1-[\BOm_1,Q_0])+\mu_1\big[Q_0,(\dot{Q}^{\top}_1-[\BOm_1,Q_0])\big]\nonumber\\
&+\sigma^d(Q^{\top}_1,Q_0)+\sigma^d(Q_0,Q^{\top}_1)+L(Q^{\top}_1,\vv_1)+R\Big),\label{hilbert-ex2}\\
\nabla\vv_1=&0.\label{hilbert-ex3}
\end{align}

In order to prove the unique solvability of the linear system (\ref{hilbert-ex1})-(\ref{hilbert-ex3}), we need to present an a priori estimate for the following energy
\begin{align*}
\ME(t)\eqdefa&\|\vv_1\|^2_{L^2}+\langle Q^{\top}_1,\ML(Q^{\top}_1)\rangle+\|\dot{Q}^{\top}_1\|^2_{L^2}
+\|Q^{\top}_1\|^2_{L^2},
\end{align*}
that is to prove the energy inequality
\begin{align}
\frac{\ud}{\ud t}\ME(t)\leq C(\ME(t)+\|R(t)\|_{L^2}),\label{ME-energy}
\end{align}
where the solution $(Q^{\top}_1,\vv_1)$ satisfy $(\vv_1,\partial_tQ^{\top}_1,\nabla Q^{\top}_1)\in L^{\infty}([0,T];H^{k-4})$.

First of all, from the equation (\ref{hilbert-ex1}) and (\ref{L:postive}) we have
\begin{align}\label{Qtop1-L2}
&J\langle\ddot{Q}^{\top}_1,Q^{\top}_1\rangle+\mu_1\langle\dot{Q}^{\top}_1,Q^{\top}_1\rangle\nonumber\\
&=\Big\langle-\ML(Q^{\top}_1)-\frac{\mu_2}{2}\DD_1+\mu_1[\BOm_1,Q_0],Q^{\top}_1\Big\rangle
-J\langle\dot{\vv}_1\cdot\nabla Q_0,Q^{\top}_1\rangle\nonumber\\
&\quad+\big\langle L(\dot{Q}^{\top}_1)+L(Q^{\top}_1,\vv_1)+R,Q^{\top}_1\big\rangle\nonumber\\
&\leq-\frac{\ud}{\ud t}\langle\vv_1\cdot\nabla Q_0,Q^{\top}_1\rangle
+\delta\|\nabla\vv_1\|^2_{L^2}+C_{\delta}(\|\vv_1\|^2_{L^2}\nonumber\\
&\quad+\|\dot{Q}^{\top}_1\|^2_{L^2}+\|Q^{\top}_1\|^2_{H^1}+\|R\|^2_{L^2}),
\end{align}
where we have been obliged to estimate the term $-J\langle\dot{\vv}_1\cdot\nabla Q_0,Q^{\top}_1\rangle$. In fact, from integration by parts we know that
\begin{align*}
&-J\langle\dot{\vv}_1\cdot\nabla Q_0,Q^{\top}_1\rangle\\
&=-\frac{\ud}{\ud t}\langle\vv_1\cdot\nabla Q_0,Q^{\top}_1\rangle
+\langle\vv_1\cdot(\partial_t+\vv_0\cdot\nabla)\nabla Q_0,Q^{\top}_1\rangle
+\langle\vv_1\cdot\nabla Q_0,\dot{Q}^{\top}_1\rangle\\
&\leq
-\frac{\ud}{\ud t}\langle\vv_1\cdot\nabla Q_0,Q^{\top}_1\rangle
+C(\|\vv_1\|^2_{L^2}+\|Q^{\top}_1\|^2_{L^2}+\|\dot{Q}^{\top}_1\|^2_{L^2}).
\end{align*}

It can be observed that, for any $Q\in\mathbb{S}^3_0$, there holds
\begin{align}\label{ddQ-Q}
\langle\ddot{Q},Q\rangle=&\int_{\BR}\partial_t\dot{Q}_{ij}Q_{ij}+v_k\partial_k\dot{Q}_{ij}Q_{ij}\ud\xx\nonumber\\
=&\int_{\BR}\Big(\partial_t(\dot{Q}_{ij}Q_{ij})+v_k\partial_k(\dot{Q}_{ij}Q_{ij})-\dot{Q}_{ij}\dot{Q}_{ij}\Big)\ud\xx\nonumber\\
=&\frac{\ud}{\ud t}\int_{\BR}\dot{Q}:Q\ud\xx-\int_{\BR}|\dot{Q}|^2\ud\xx.
\end{align}
From (\ref{Qtop1-L2}) and (\ref{ddQ-Q}) we thus obtain
\begin{align}\label{Qtop1-L22}
&\frac{\ud}{\ud t}\int_{\BR}\Big(J\dot{Q}^{\top}_1:Q^{\top}_1
+J(\vv_1\cdot\nabla Q_0):Q^{\top}_1
+\frac{\mu_1}{2}|Q^{\top}_1|^2\Big)\ud\xx\nonumber\\
&\leq\delta\|\nabla\vv_1\|^2_{L^2}+C_{\delta}(\|\vv_1\|^2_{L^2}+\|\dot{Q}^{\top}_1\|^2_{L^2}
+\|Q^{\top}_1\|^2_{H^1}+\|R\|^2_{L^2}).
\end{align}

Taking advantage of the linear system (\ref{hilbert-ex1})-(\ref{hilbert-ex3}) and integration by parts over $\BR$, we know
\begin{align}\label{hilbert-estimate}
&\langle\partial_t\vv_1,\vv_1\rangle+J\langle\ddot{Q}^{\top}_1, \dot{Q}^{\top}_1\rangle\nonumber\\
&=\underbrace{-\Big\langle\beta_1Q_0(Q_0:\DD_1)
+\beta_4\DD_1+\beta_5\DD_1\cdot Q_0+\beta_6Q_0\cdot\DD_1,\nabla\vv_1\Big\rangle}_{I_1}\nonumber\\
&\quad\underbrace{-\Big\langle\beta_7(\DD_1\cdot Q^2_0+Q^2_0\cdot\DD_1),\nabla\vv_1\Big\rangle}_{I_2}\nonumber\\
&\quad\underbrace{-\Big\langle\frac{\mu_2}{2}(\dot{Q}^{\top}_1-[\BOm_1,Q_0])
+\mu_1\big[Q_0,(\dot{Q}^{\top}_1-[\BOm_1,Q_0])\big],\nabla\vv_1\Big\rangle}_{I_3}\nonumber\\
&\quad\underbrace{-\mu_1\langle\dot{Q}^{\top}_1-\MC_1,\dot{Q}^{\top}_1\rangle-\Big\langle\MA+\frac{\mu_2}{2}\MD_1,\dot{Q}^{\top}_1\Big\rangle}_{I_4}
\underbrace{-J\langle\MU,\dot{Q}^{\top}_1\rangle}_{I_5}\nonumber\\
&\quad+\underbrace{\Big\langle L(\dot{Q}^{\top}_1)+L(Q^{\top}_1,\vv_1)+R, \dot{Q}^{\top}_1\Big\rangle
+\Big\langle\sigma^d(Q^{\top}_1,Q_0)+\sigma^d(Q_0,Q^{\top}_1),\nabla\vv_1\Big\rangle}_{I_6}\nonumber\\
&\quad+\underbrace{\langle L(Q^{\top}_1,\vv_1)+R,\nabla\vv_1\rangle}_{I_7}.
\end{align}
We next estimate the right-hand side of (\ref{hilbert-estimate}) term by term. Using $Q_0=s(\nn\nn-\frac13\II)$ and the relation $\beta_6-\beta_5=\mu_2$ in (\ref{Q-Parodi}), note that $\langle[\DD_1,Q_0],\DD_1\rangle=0$, we obtain that
\begin{align*}
I_1+I_2=&-\Big\langle\beta_1Q_0(Q_0:\DD_1)+\beta_4\DD_1+\frac{\beta_5+\beta_6}{2}(Q_0\cdot\DD_1+\DD_1\cdot Q_0),\DD_1\Big\rangle\\
&-\Big\langle\beta_7(\DD_1\cdot Q^2_0+Q^2_0\cdot\DD_1),\DD_1\Big\rangle\\
&+\Big\langle\Big(\frac{\beta_5+\beta_6}{2}-\beta_5\Big)\DD_1\cdot Q_0
+\Big(\frac{\beta_5+\beta_6}{2}-\beta_6\Big)Q_0\cdot\DD_1, \DD_1+\BOm_1\Big\rangle\\
=&-\beta_1s^2\|\nn\nn:\DD_1\|^2_{L^2}-\Big(\beta_4-\frac{s(\beta_5+\beta_6)}{3}
+\frac29\beta_7s^2\Big)\|\DD_1\|^2_{L^2}\\
&-\Big(s(\beta_5+\beta_6)+\frac23\beta_7s^2\Big)\|\nn\cdot\DD_1\|^2_{L^2}
+\frac{\mu_2}{2}\langle[\BOm_1,Q_0],\DD_1\rangle.
\end{align*}
Making use of $\MP^{in}(\dot{Q}^{\top}_1)=\dot{Q}^{\top}_1+L(Q^{\top}_1)$ and the self-adjoint property of the projection operator yields
that
\begin{align}\label{LL-ddQ1}
-\langle\MA,\dot{Q}^{\top}_1\rangle
=&-\langle\ML(Q^{\top}_1),\dot{Q}^{\top}_1+L(Q^{\top}_1)\rangle\nonumber\\
=&-\langle\ML(Q^{\top}_1),\partial_tQ^{\top}_1\rangle
-\langle\ML(Q^{\top}_1),\vv_0\cdot\nabla Q^{\top}_1\rangle
-\langle\ML(Q^{\top}_1),L(Q^{\top}_1)\rangle\nonumber\\
\leq&-\frac12\frac{\ud}{\ud t}\langle Q^{\top}_1,\ML(Q^{\top}_1)\rangle+C\|Q^{\top}_1\|^2_{H^1}.
\end{align}
Here we have employed the following fact that, for any $Q\in \mathbb{S}^3_0$,
\begin{align}\label{MLQ-vNQ}
&-\langle\ML(Q),\vv_0\cdot\nabla Q\rangle\nonumber\\
&=\int_{\BR}v_{0j}Q_{kl,j}\Big(L_1\Delta Q_{kl}
+\frac12(L_2+L_3)\big(Q_{km,ml}+Q_{lm,mk}-\frac23\delta_{kl}Q_{ij,ij}\big)\Big)\ud\xx\nonumber\\
&=\int_{\BR}\Big(-L_1v_{0j}Q_{kl,mj}Q_{kl,m}-\frac12(L_2+L_3)(v_{0j}Q_{kl,lj}Q_{km,m}+v_{0j}Q_{kl,kj}Q_{lm,m})\nonumber\\
&\qquad\quad-L_1v_{0j,m}Q_{kl,j}Q_{kl,m}-\frac12(L_2+L_3)(v_{0j,l}Q_{kl,j}Q_{km,m}+v_{0j,k}Q_{kl,j}Q_{lm,m})\Big)\ud\xx\nonumber\\
&=\int_{\BR}\Big(-L_1v_{0j,m}Q_{kl,j}Q_{kl,m}-\frac12(L_2+L_3)(v_{0j,l}Q_{kl,j}Q_{km,m}+v_{0j,k}Q_{kl,j}Q_{lm,m})\Big)\ud\xx\nonumber\\
&\leq C\|Q\|^2_{H^1}.
\end{align}
In addition, we have
\begin{align}\label{DD-ddQ1}
-\frac{\mu_2}{2}\langle\MD_1,\dot{Q}^{\top}_1\rangle
=&-\frac{\mu_2}{2}\langle\DD_1,\MP^{in}(\dot{Q}^{\top}_1)\rangle\nonumber\\
=&-\frac{\mu_2}{2}\langle\DD_1,\dot{Q}^{\top}_1+L(Q^{\top}_1)\rangle\nonumber\\
\leq&-\frac{\mu_2}{2}\langle\DD_1,\dot{Q}^{\top}_1\rangle
+\delta\|\nabla\vv_1\|^2_{L^2}+C_{\delta}\|Q^{\top}_1\|^2_{L^2}.
\end{align}

For terms $I_3$ and $I_4$, we notice that
\begin{align*}
\mu_1\langle\MC_1,\dot{Q}^{\top}_1\rangle=&\mu_1\langle[\BOm_1,Q_0],\MP^{in}(\dot{Q}^{\top}_1)\rangle
=\mu_1\langle[\BOm_1,Q_0],\dot{Q}^{\top}_1+L(Q^{\top}_1)\rangle\\
\leq&\mu_1\langle[\BOm_1,Q_0],\dot{Q}^{\top}_1\rangle
+\delta\|\nabla\vv_1\|^2_{L^2}+C_{\delta}\|Q^{\top}_1\|^2_{L^2},
\end{align*}
then from (\ref{LL-ddQ1}) and (\ref{DD-ddQ1}), we get
\begin{align*}
I_3+I_4\leq&-\frac{\mu_2}{2}\langle(\dot{Q}^{\top}_1-[\BOm_1,Q_0]),\DD_1\rangle
-\mu_1\Big\langle\big[Q_0,\BOm_1\big],(\dot{Q}^{\top}_1-[\BOm_1,Q_0])\Big\rangle\\
&-\mu_1\langle\dot{Q}^{\top}_1-[\BOm_1,Q_0],\dot{Q}^{\top}_1\rangle
-\frac12\frac{\ud}{\ud t}\langle Q^{\top}_1,\ML(Q^{\top}_1)\rangle
-\frac{\mu_2}{2}\langle\DD_1,\dot{Q}^{\top}_1\rangle\\
&+\delta\|\nabla\vv_1\|^2_{L^2}+C_{\delta}\|Q^{\top}_1\|^2_{H^1}\\
=&-\mu_2\langle\dot{Q}^{\top}_1-[\BOm_1,Q_0],\DD_1\rangle-\frac{\mu_2}{2}\langle[\BOm_1,Q_0],\DD_1\rangle
-\mu_1\|\dot{Q}^{\top}_1-[\BOm_1,Q_0]\|^2_{L^2}\\
&-\frac12\frac{\ud}{\ud t}\langle Q^{\top}_1,\ML(Q^{\top}_1)\rangle
+\delta\|\nabla\vv_1\|^2_{L^2}+C_{\delta}\|Q^{\top}_1\|^2_{H^1}\\
\leq&-\mu_1\Big\|\dot{Q}^{\top}_1-[\BOm_1,Q_0]+\frac{\mu_2}{2\mu_1}\DD_1\Big\|^2_{L^2}
+\frac{\mu^2_2}{4\mu_1}\|\DD_1\|^2_{L^2}
-\frac{\mu_2}{2}\langle[\BOm_1,Q_0],\DD_1\rangle\\
&-\frac12\frac{\ud}{\ud t}\langle Q^{\top}_1,\ML(Q^{\top}_1)\rangle+\delta\|\nabla\vv_1\|^2_{L^2}+C_{\delta}\|Q^{\top}_1\|^2_{H^1}.
\end{align*}

For term $I_5$, using the equation (\ref{hilbert-ex1}) and basic properties of the projection operator $\MP^{in}$, and integration by parts, we deduce that
\begin{align*}
I_5=&-J\langle\dot{\vv}_1\cdot\nabla Q_0,\MP^{in}(\dot{Q}^{\top}_1)\rangle\\
=&-J\frac{\ud}{\ud t}\langle\vv_1\cdot\nabla Q_0,\MP^{in}(\dot{Q}^{\top}_1)\rangle
+J\langle\vv_1\cdot\dot{\overline{\nabla Q_0}},\MP^{in}(\dot{Q}^{\top}_1)\rangle\\
&+J\Big\langle\vv_1\cdot\nabla Q_0,(\partial_t+\vv_0\cdot\nabla)\MP^{in}(\dot{Q}^{\top}_1)\Big\rangle\\
\leq&-J\frac{\ud}{\ud t}\langle\vv_1\cdot\nabla Q_0,\MP^{in}(\dot{Q}^{\top}_1)\rangle
+J\langle\MP^{in}(\vv_1\cdot\nabla Q_0),\ddot{Q}^{\top}_1\rangle
+C(\|\vv_1\|^2_{L^2}+\|\dot{Q}^{\top}_1\|^2_{L^2})\\
=&-J\frac{\ud}{\ud t}\langle\vv_1\cdot\nabla Q_0,\MP^{in}(\dot{Q}^{\top}_1)\rangle
-\mu_1\langle\MP^{in}(\vv_1\cdot\nabla Q_0),\dot{Q}^{\top}_1\rangle
-\langle\vv_1\cdot\nabla Q_0,\ML(Q^{\top}_1)\rangle\\
&-\Big\langle\vv_1\cdot\nabla Q_0,\frac{\mu_2}{2}\DD_1+\mu_1[\BOm_1,Q_0]\Big\rangle
-J\langle\vv_1\cdot\nabla Q_0,\dot{\vv}_1\cdot\nabla Q_0\rangle\\
&+\Big\langle\MP^{in}(\vv_1\cdot\nabla Q_0),L(\dot{Q}^{\top}_1)+L(Q^{\top}_1,\vv_1)+R\Big\rangle
+C(\|\vv_1\|^2_{L^2}+\|\dot{Q}^{\top}_1\|^2_{L^2})\\
\leq&-J\frac{\ud}{\ud t}\langle\vv_1\cdot\nabla Q_0,\MP^{in}(\dot{Q}^{\top}_1)\rangle
-\frac{J}{2}\frac{\ud}{\ud t}\|\vv_1\cdot\nabla Q_0\|^2_{L^2}\\
&+\delta\|\nabla\vv_1\|^2_{L^2}+C_{\delta}(\|\vv_1\|^2_{L^2}+\|\dot{Q}^{\top}_1\|^2_{L^2}
+\|Q^{\top}_1\|^2_{H^1}+\|R\|^2_{L^2}),
\end{align*}
where $\dot{\overline{\nabla Q_0}}=(\partial_t+\vv_0\cdot\nabla)\nabla Q_0$ and we have utilized the following estimate
\begin{align*}
-J\langle\vv_1\cdot\nabla Q_0,\dot{\vv}_1\cdot\nabla Q_0\rangle
=&-\frac{J}{2}\frac{\ud}{\ud t}\|\vv_1\cdot\nabla Q_0\|^2_{L^2}
+J\langle\vv_1\cdot\nabla Q_0,\vv_1\cdot\dot{\overline{\nabla Q_0}}\rangle\\
\leq&-\frac{J}{2}\frac{\ud}{\ud t}\|\vv_1\cdot\nabla Q_0\|^2_{L^2}+C\|\vv_1\|^2_{L^2}.
\end{align*}
For terms $I_6$ and $I_7$, we have
\begin{align*}
I_6+I_7\leq&\delta\|\nabla\vv_1\|^2_{L^2}+C_{\delta}(\|\vv_1\|^2_{L^2}+\|\dot{Q}^{\top}_1\|^2_{L^2}
+\|Q^{\top}_1\|^2_{H^1}+\|R\|^2_{L^2}).
\end{align*}

Putting all the above estimates together and using Lemma \ref{diss:relation}, we obtain that
\begin{align}\label{Qv1-energy}
&\frac12\frac{\ud}{\ud t}\int_{\BR}
\bigg(|\vv_1|^2+J\Big(|\dot{Q}^{\top}_1|^2+|\vv_1\cdot\nabla Q_0|^2\Big)
+Q^{\top}_1:\ML(Q^{\top}_1)\bigg)\ud\xx\nonumber\\
&\quad+\frac{\ud}{\ud t}\langle\vv_1\cdot\nabla Q_0,\MP^{in}(\dot{Q}^{\top}_1)\rangle\nonumber\\
&\leq-\tilde{\beta}_1\|\nn\nn:\DD_1\|^2_{L^2}
-\tilde{\beta}_2\|\DD_1\|^2_{L^2}-\tilde{\beta}_3\|\nn\cdot\DD_1\|^2_{L^2}-5\delta\|\nabla\vv_1\|^2_{L^2}
\nonumber\\
&\quad+4\delta\|\nabla\vv_1\|^2_{L^2}+C_{\delta}(\|\vv_1\|^2_{L^2}+\|\dot{Q}^{\top}_1\|^2_{L^2}
+\|Q^{\top}_1\|^2_{H^1}+\|R\|^2_{L^2})\nonumber\\
&\leq-\delta\|\nabla\vv_1\|^2_{L^2}+C_{\delta}(\|\vv_1\|^2_{L^2}+\|\dot{Q}^{\top}_1\|^2_{L^2}
+\|Q^{\top}_1\|^2_{H^1}+\|R\|^2_{L^2}),
\end{align}
where the coefficients $\tilde{\beta}_i(i=1,2,3)$ are given by
\begin{align}\label{tilde-beta}
\left\{
\begin{aligned}
&\tilde{\beta}_1=\beta_1s^2,\quad\tilde{\beta}_2=\beta_4-5\delta-\frac{s(\beta_5+\beta_6)}{3}
+\frac29\beta_7s^2-\frac{\mu^2_2}{4\mu_1},\\
&\tilde{\beta}_3=s(\beta_5+\beta_6)+\frac23\beta_7s^2,
\end{aligned}
\right.
\end{align}
and $\delta>0$ is small enough, such that $\tilde{\beta}_1, \tilde{\beta}_2, \tilde{\beta}_3$ satisfy the relation (\ref{diss:coeff})
(notice that  (\ref{diss:coeff}) holds with strictly positive sign when $\delta=0$). Notice that
\begin{align*}
&  |\vv_1|^2+J\Big(|\dot{Q}^{\top}_1|^2+|\vv_1\cdot\nabla Q_0|^2\Big)+2J(\vv_1\cdot\nabla Q_0):\MP^{in}(\dot{Q}^{\top}_1)\\
  &= |\vv_1|^2+J\Big(|\MP^{in}(\dot{Q}^{\top}_1)+\vv_1\cdot\nabla Q_0|^2+|\MP^{out}(\dot{Q}^{\top}_1)|^2\Big)\\
  &\ge\frac12 |\vv_1|^2+ C(|\nabla Q_0|)|\dot{Q}^{\top}_1|^2.
\end{align*}
Therefore, combining (\ref{Qtop1-L22}) and (\ref{Qv1-energy}), and choosing suitable $M>0$, such that
\begin{align*}
 M( \frac12 |\vv_1|^2+ C(|\nabla Q_0|)|\dot{Q}^{\top}_1|^2)+J\dot{Q}^{\top}_1:Q^{\top}_1 +J(\vv_1\cdot\nabla Q_0): Q^{\top}_1+\frac{\mu_1}{2}|Q^{\top}_1|^2\\
 \ge C(\|\nabla Q_0\|_{L^\infty})(|\vv_1|^2+|\dot{Q}^{\top}_1|^2+|Q^{\top}_1|^2),
 \end{align*}
we obtain the following energy estimate
\begin{align*}
\frac{\ud}{\ud t}\ME(t)\leq C(\|\nabla Q_0\|_{L^\infty})(\ME(t)+\|R(t)\|_{L^2}).
\end{align*}

The estimate of the higher-order derivative for $(\vv_1, Q_1)$ can be also established by introducing a similar energy functional. Therefore, the solution $(\vv_1,Q_1)$ is uniquely determined. In a similar argument, we can solve $(\vv_2, Q_2)$ and $Q_3$ by (\ref{eq:expansion2-ve1})-(\ref{eq:expansion2-ve2}). Here we omit the details.

\section{The estimate for the remainder}\label{estim-remainder}
The main task of this section is to derive the remainder system and the uniform estimates for the remainder. The previous Proposition \ref{prop:Hilbert} tells us that $(\vv_i,\partial_tQ_i,\nabla Q_i)\in L^{\infty}([0,T];H^{k-4i})$ for $i=0,1,2$ and $Q_3\in L^{\infty}([0,T];H^{k-11})$. Hence, in what follows, $\vv_i$ and $Q_i$ will be treated as known functions. We denote by $C$ a constant depending on $\displaystyle\sum_{i=0}^2\sup_{t\in [0,T]}\|\vv_i(t)\|_{H^{k-4i}}$ and $\displaystyle\sum_{i=0}^3\sup_{t\in [0,T]}\|Q_i(t)\|_{H^{k+1-4i}}$, and independent of $\ve$.
\subsection{The system for the remainder}
Recalling the Hilbert expansions (\ref{Qvar})-(\ref{vvar}), then we have
\begin{align}\label{QRvR}
Q_R={\ve^{-3}}(Q^{\ve}-\widetilde{Q}),\quad \vv_R={\ve^{-3}}(\vv^{\ve}-\tilde{\vv}),
\end{align}
where $Q_R$ and $\vv_R$ depend on $\ve$. In order to derive the system of the remainder (\ref{QRvR}), we denote
\begin{align*}
\widetilde{\DD}=&\DD_0+\ve\DD_1+\ve^2\DD_2,\quad\widetilde{\BOm}=\BOm_0+\ve\BOm+\ve^2\BOm_2, \\
\dot{Q}_R=&(\partial_t+\tilde{\vv}\cdot\nabla)Q_R,\quad
\ddot{Q}_R=(\partial_t+\tilde{\vv}\cdot\nabla)\dot{Q}_R.
\end{align*}
From (\ref{CJQ-ve})-(\ref{TR}) and the definitions of $\HH_i(i=0,1,2)$, the molecular field $\HH(Q^{\ve})$ can be expanded into
\begin{align*}
\HH(Q^{\ve})=-{\ve}^{-1}\MT(Q^{\ve})-\ML(Q^{\ve})=-\HH_0-\ve\HH_1-\ve^2\HH_2-\ve^2\HH_R-\ve^3\MT^{\ve}_R,
\end{align*}
where $\HH_R=\MH^{\ve}_{\nn}(Q_R)\eqdefa\MH_{\nn}(Q_R)+\ve\ML(Q_R)$.

Therefore, from (\ref{eq:QS-general-ve1})-(\ref{eq:QS-general-ve3}) and (\ref{expan:Q0-1})-(\ref{eq:expansion2-ve3}), the system for the remainder can be derived as follows:
\begin{align}
J\ddot{Q}_R+\mu_1\dot{Q}_R=&-{\ve}^{-1}\MH^{\ve}_{\nn}(Q_R)
-\frac{\mu_2}{2}\DD_R+\mu_1[\BOm_R,Q_0]+\FF_R+\widetilde{\FF}_R,\label{remainder-Q-R}\\
\frac{\partial\vv_R}{\partial t}+\tilde{\vv}\cdot\nabla\vv_R=&-\nabla p_R+\nabla\cdot\Big(\beta_1Q_0(Q_0:\DD_R)+\beta_4\DD_R+\beta_5Q_0\cdot\DD_R\nonumber\\
&+\beta_6\DD_R\cdot Q_0+\beta_7(\DD_R\cdot Q^2_0+Q^2_0\cdot\DD_R)\nonumber\\
&+\frac{\mu_2}{2}(\dot{Q}_R-[\BOm_R,Q_0])+\mu_1\big[Q_0,(\dot{Q}_R-[\BOm_R,Q_0])\big]\Big)\nonumber\\
&+\nabla\cdot\GG_R+\GG'_R,
\label{remainder-v-R}\\
\nabla\cdot\vv_R=&0.\label{remainder-div}
\end{align}
The term $\FF_R$ is given by
\begin{align*}
\FF_R=\FF_1+\FF_2+\FF_3+\FF_4+\FF_5,
\end{align*}
where $\FF_1$ is independent of $(\vv_R,Q_R)$,
\begin{align*}
\FF_1=&J\Big(-\partial^2_tQ_3-2\vv_0\cdot\nabla \partial_tQ_3-2\vv_1\cdot\nabla\partial_t(Q_2+\ve Q_3)-2\vv_2\cdot\nabla\partial_t\widehat{Q}^{\ve}-\partial_t\vv_0\cdot\nabla Q_3\\
&-\partial_t\vv_1\cdot\nabla(Q_2+\ve Q_3)-\partial_t\vv_2\cdot\nabla\widehat{Q}^{\ve}
-\sum_{i+j+k\geq3}\ve^{i+j+k-3}\vv_i\cdot\nabla(\vv_j\cdot\nabla Q_k)\Big)\\
&+\mu_1\Big(-\partial_tQ_3-\vv_0\cdot\nabla Q_3-\vv_1\cdot\nabla(Q_2+\ve Q_3)-\vv_2\cdot\nabla\widehat{Q}^{\ve}\Big)\\
&+\mu_1\Big(\sum_{i+j\geq3}\ve^{i+j-3}(\BOm_i\cdot Q_j-Q_j\cdot\BOm_i)\Big)-\BB^{\ve}-\ML(Q_3)\\
\equiv&-J\FF_{11}-\mu_1\FF_{12}-\BB^{\ve}-\ML(Q_3),
\end{align*}
and $\FF_2, \FF_3$ linearly depend on $(\vv_R,Q_R)$,
\begin{align*}
\FF_2=&\mu_1(\widetilde{\BOm}\cdot Q_R-Q_R\cdot\widetilde{\BOm})
-\Big(-b\MB(\widehat{Q}^{\ve},Q_R)+c\MC(Q_R,\widehat{Q}^{\ve},Q_0)
+\frac{c}{2}\ve\MC(Q_R,\widehat{Q}^{\ve},\widehat{Q}^{\ve})\Big),\\
\FF_3=&
-J\vv_R\cdot\nabla(\partial_t\widetilde{Q}+\tilde{\vv}\cdot\nabla\widetilde{Q})
+\mu_1\Big(-\vv_R\cdot\nabla\widetilde{Q}+\ve\BOm_R\cdot\widehat{Q}^{\ve}-\ve\widehat{Q}^{\ve}\cdot\BOm_R\Big)\\
\equiv&-J\FF_{31}-\mu_1\FF_{32},
\end{align*}
and $\FF_4, \FF_5$ nonlinearly depend on $(\vv_R, Q_R)$,
\begin{align*}
\FF_4=&-\ve^3J
\vv_R\cdot\nabla (\vv_R\cdot\nabla\widetilde{Q})
+\ve^3\mu_1\Big(-\vv_R\cdot\nabla Q_R+\BOm_R\cdot Q_R-Q_R\cdot\BOm_R\Big)\\
\equiv&-\ve^3J\FF_{41}-\ve^3\mu_1\FF_{42},\\
\FF_5=&-\Big(-\frac{b}{2}\ve^2\MB(Q_R,Q_R)+c\ve^2\MC(Q_R,Q_R,\widetilde{Q})
+c\ve^5\MC(Q_R,Q_R,Q_R)\Big).
\end{align*}
The term $\widetilde{\FF}_R$, including the derivative term with respect to time $t$, is given by
\begin{align*}
\widetilde{\FF}_R=&-J(\partial_t+\tilde{\vv}\cdot\nabla)(\vv_R\cdot\nabla\widetilde{Q})\\
&+\ve^3J\Big(-(\partial_t+\tilde{\vv}\cdot\nabla)(\vv_R\cdot\nabla Q_R)-\vv_R\cdot\nabla \dot{Q}_R
-\ve^3\vv_R\cdot\nabla(\vv_R\cdot\nabla Q_R)\Big).
\end{align*}

On the other hand, the term $\GG'_R$ takes the following form
\begin{align*}
\GG'_R=-\vv_1\cdot\nabla\vv_2-\vv_2\cdot\nabla\vv_1-\ve\vv_2\cdot\nabla\vv_2
-\vv_R\cdot\nabla\tilde{\vv}-\ve^3\vv_R\cdot\nabla\vv_R.
\end{align*}
Similarly, the term $\GG_R$ can be written as
\begin{align*}
\GG_R=\GG_1+\GG_2+\GG_3,
\end{align*}
where $\GG_1$ is given by
\begin{align*}
\GG_1=&\sum_{i+j+k\geq3}\ve^{i+j+k-3}\bigg(\beta_1Q_i(Q_j:\DD_k)
+\beta_7\Big(\DD_i\cdot Q_j\cdot Q_k+Q_i\cdot Q_j\cdot\DD_k\Big)\bigg)\\
&+\sum_{i+j\geq3}\ve^{i+j-3}\Big(\beta_5\DD_i\cdot Q_j+\beta_6Q_i\cdot\DD_j+\sigma^d(Q_i,Q_j)\Big)
+\frac{\mu_2}{2}\FF_{12}\\
&+\mu_1\bigg(\sum_{i+j\geq3}\ve^{i+j-3}[Q_i,\partial_tQ_j]
+\sum_{i+j+k\geq3}\ve^{i+j+k-3}\Big[Q_i,\big(\vv_j\cdot\nabla Q_k-[\BOm_j,Q_k]\big)\Big]\bigg),
\end{align*}
and $\GG_2, \GG_3$ are given by
\begin{align*}
\GG_2=&\beta_1\Big(\widetilde{Q}(Q_R:\widetilde{\DD})+Q_R(\widetilde{Q}:\widetilde{\DD})
+\ve Q_0(\widehat{Q}^{\ve}:\DD_R)+\ve\widehat{Q}^{\ve}(\widetilde{Q}:\DD_R)\Big)\\
&+\beta_5(\widetilde{\DD}\cdot Q_R+\ve\DD_R\cdot\widehat{Q}^{\ve})
+\beta_6(\ve\widehat{Q}^{\ve}\cdot\DD_R+Q_R\cdot\widetilde{\DD})\\
&+\beta_7\Big(\widetilde{\DD}\cdot Q_R\cdot\widetilde{Q}
+\widetilde{\DD}\cdot\widetilde{Q}\cdot Q_R+\ve\DD_R\cdot\widehat{Q}^{\ve}\cdot\widetilde{Q}
+\ve\DD_R\cdot Q_0\cdot \widehat{Q}^{\ve}\Big)\\
&+\beta_7\Big(\widetilde{Q}\cdot Q_R\cdot\widetilde{\DD}
+Q_R\cdot\widetilde{Q}\cdot\widetilde{\DD}+\ve\widehat{Q}^{\ve}\cdot Q_0\cdot\DD_R
+\ve\widetilde{Q}\cdot \widehat{Q}^{\ve}\cdot\DD_R\Big)\\
&+\frac{\mu_2}{2}(\FF_{32}-[\widetilde{\BOm},Q_R])
+\mu_1\Big[Q_R,\big(\partial_t\widetilde{Q}+\tilde{\vv}\cdot\nabla\widetilde{Q}-[\widetilde{\BOm},\widetilde{Q}]\big)\Big]\\
&+\mu_1\Big[\widetilde{Q},\big(\vv_R\cdot\nabla\widetilde{Q}-[\widetilde{\BOm},Q_R]\big)\Big]
+\mu_1\big[\ve\widehat{Q}^{\ve},(\dot{Q}_R-[\BOm_R,Q_0])\big]\\
&-\mu_1\big[\widetilde{Q},[\BOm_R,\ve\widehat{Q}^{\ve}]\big]
+\sigma^d(\widetilde{Q},Q_R)+\sigma^d(Q_R,\widetilde{Q}),\\
\GG_3=&\ve^3\bigg(\beta_1\Big(\widetilde{Q}(Q_R:\DD_R)+Q_R(\widetilde{Q}:\DD_R)+Q_R(Q_R:\widetilde{\DD})
+\ve^3Q_R(Q_R:\DD_R)\Big)\\
&+\beta_7\Big(\widetilde{\DD}\cdot Q_R\cdot Q_R+\DD_R\cdot\widetilde{Q}\cdot Q_R
+\DD_R\cdot Q_R\cdot\widetilde{Q}+\ve^3\DD_R\cdot Q_R\cdot Q_R\Big)\\
&+\beta_7\Big(\widetilde{Q}\cdot Q_R\cdot\DD_R+Q_R\cdot\widetilde{Q}\cdot\DD_R
+Q_R\cdot Q_R\cdot\widetilde{\DD}+\ve^3 Q_R\cdot Q_R\cdot\DD_R\Big)\\
&+\beta_5\DD_R\cdot Q_R+\beta_6Q_R\cdot\DD_R
+\frac{\mu_2}{2}\FF_{42}
+\mu_1\Big[\widetilde{Q},\big(\vv_R\cdot\nabla Q_R-[\BOm_R,Q_R]\big)\Big]\\
&+\mu_1\Big[Q_R,\big(\dot{Q}_R+\vv_R\cdot\nabla\widetilde{Q}-[\widetilde{\BOm},Q_R]-[\BOm_R,\widetilde{Q}]\big)\Big]\\
&+\mu_1\ve^3\Big[Q_R,\big(\vv_R\cdot\nabla Q_R-[\BOm_R,Q_R]\big)\Big]
+\sigma^d(Q_R,Q_R)\bigg).
\end{align*}

\subsection{Uniform estimates for the remainder}
In this subsection, we derive the uniform estimates for the remainder. We assume that $(\vv_R,Q_R)$ is a smooth solution of the remainder system (\ref{remainder-Q-R})-(\ref{remainder-div}) and introduce the following energy functional:
\begin{align}
\Ef(t)\eqdefa&\int_{\BR}\Big(|\vv_R|^2+|\dot{Q}_R|^2
+|Q_R|^2+{\ve}^{-1}\MH^{\ve}_{\nn}(Q_R):Q_R\Big)\nonumber\\
&+\ve^2\Big(|\nabla\vv_R|^2+|\partial_i\dot{Q}_R|^2
+{\ve}^{-1}\MH^{\ve}(\partial_iQ_R):\partial_iQ_R\Big)\nonumber\\
&+\ve^4\Big(|\Delta\vv_R|^2+|\Delta\dot{Q}_R|^2
+{\ve}^{-1}\MH^{\ve}(\Delta Q_R):\Delta Q_R\Big)\ud\xx,
\label{Ef:energy}\\
\Ff(t)\eqdefa&\int_{\BR}\delta\Big(|\nabla\vv_R|^2
+\ve^2|\Delta\vv_R|^2+\ve^4|\nabla\Delta\vv_R|^2\Big)\ud\xx.\label{Ff:energy}
\end{align}

By using the definitions of $\Ef$ and $\Ff$, we can immediately obtain that
\begin{lemma}\label{lem:energy}
The following estimates hold
\begin{align*}
\|(\ve\nabla^2Q_R,\ve^2\nabla^3Q_R)\|_{L^2}
  +\|(\vv_R,\ve\nabla\vv_R,\ve^2\nabla^2\vv_R)\|_{L^2}\leq &~C\Ef^{\frac12},\\
\|Q_R\|_{H^1}+\|(\dot{Q}_R,\ve\nabla\dot{Q}_R,\ve^2\Delta\dot{Q}_R)\|_{L^2}\leq &~C\Ef^{\frac12},\\
\|(\nabla\vv_R,\ve\nabla^2\vv_R,\ve^2\nabla^3\vv_R)\|_{L^2}
    \leq&~ C\Ff^{\frac12}.
\end{align*}
\end{lemma}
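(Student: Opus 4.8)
The plan is to obtain every norm on the left side by a term‑by‑term comparison with one summand of $\Ef$ (resp.\ $\Ff$); the only substantive point is the coercivity of the quadratic form $P\mapsto\ve^{-1}\int_{\BR}\MH^{\ve}_{\nn}(P):P\,\ud\xx$ on the spatial‑derivative slots, which isolates the elliptic piece $\ve\ML$ from the $O(\ve^{-1})$, kernel‑degenerate piece $\MH_{\nn}$.

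First I would record two structural facts about $\MH^{\ve}_{\nn}=\MH_{\nn}+\ve\ML$. On the one hand, using the splitting $P=\MP^{in}(P)+\MP^{out}(P)$ together with Proposition \ref{linearized-oper-prop}(i)--(ii), one has pointwise, for every symmetric traceless $P$,
\[
\MH_{\nn}(P):P=\MH_{\nn}\big(\MP^{out}(P)\big):\MP^{out}(P)\ge c_0\,|\MP^{out}(P)|^2\ge 0 ,
\]
since $\MH_{\nn}$ kills $\mathrm{Ker}\MH_{\nn}$ and maps into $(\mathrm{Ker}\MH_{\nn})^{\perp}$. On the other hand, $\ML$ is the Euler--Lagrange operator of the quadratic elastic energy $\int_{\BR}f_e(\nabla\cdot)\,\ud\xx$, so integrating by parts over $\BR$ gives $\langle\ML(P),P\rangle=2\int_{\BR}f_e(\nabla P)\,\ud\xx$, whence by (\ref{L:postive}),
\[
\langle\ML(P),P\rangle\ge 2L_0\,\|\nabla P\|_{L^2}^2 .
\]
Applying both facts with $P=Q_R$, with $P=\partial_iQ_R$ (summed over $i$), and with $P=\Delta Q_R$, and discarding the nonnegative $\MH_{\nn}$ contributions, the three $\MH^{\ve}_{\nn}$ terms in (\ref{Ef:energy}) dominate $2L_0\|\nabla Q_R\|_{L^2}^2$, $\,2L_0\ve^{2}\|\nabla^2Q_R\|_{L^2}^2$ and $2L_0\ve^{4}\|\nabla\Delta Q_R\|_{L^2}^2$ respectively. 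Combined with the $|Q_R|^2$ term already in $\Ef$, the first of these gives $\|Q_R\|_{H^1}^2\le C\Ef$.

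Next I would pass from Laplacian‑type to full‑derivative bounds by Plancherel on $\BR$: for any tensor field decaying at infinity, $\|\nabla^2 f\|_{L^2}=\|\Delta f\|_{L^2}$ and $\|\nabla^3 f\|_{L^2}=\|\nabla\Delta f\|_{L^2}$ (since $\sum_{i,j}\xi_i^2\xi_j^2=|\xi|^4$ and $\sum_{i,j,k}\xi_i^2\xi_j^2\xi_k^2=|\xi|^6$), applied componentwise. This turns $\ve^{4}\|\nabla\Delta Q_R\|_{L^2}^2\le C\Ef$ into $\|\ve^2\nabla^3Q_R\|_{L^2}^2\le C\Ef$, which with $\|\ve\nabla^2Q_R\|_{L^2}^2\le C\Ef$ yields the $Q_R$ part of the first line. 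The remaining norms are read off directly: $|\vv_R|^2$, $\ve^{2}|\nabla\vv_R|^2$ and $\ve^{4}|\Delta\vv_R|^2$ sit in $\Ef$, and the same identity gives $\ve^{4}\|\nabla^2\vv_R\|_{L^2}^2=\ve^{4}\|\Delta\vv_R\|_{L^2}^2\le\Ef$, completing the first line; while $|\dot Q_R|^2$, $\ve^{2}|\partial_i\dot Q_R|^2$ and $\ve^{4}|\Delta\dot Q_R|^2$ appear verbatim in $\Ef$, which is the second line. Finally $\Ff$ controls $\delta\|\nabla\vv_R\|_{L^2}^2$, $\delta\ve^{2}\|\Delta\vv_R\|_{L^2}^2$ and $\delta\ve^{4}\|\nabla\Delta\vv_R\|_{L^2}^2$; using once more $\|\nabla^2\vv_R\|_{L^2}=\|\Delta\vv_R\|_{L^2}$, $\|\nabla^3\vv_R\|_{L^2}=\|\nabla\Delta\vv_R\|_{L^2}$, and absorbing the fixed constant $\delta$ into $C$, gives the last line.

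The only step deserving care is the coercivity estimate $\langle\MH^{\ve}_{\nn}(P),P\rangle\ge 2\ve L_0\|\nabla P\|_{L^2}^2$: it is the genuinely elliptic term $\ve\ML$, not the large but kernel‑degenerate term $\MH_{\nn}$, that supplies the $\|\nabla P\|_{L^2}$ control, and the degenerate term is simply dropped by its pointwise nonnegativity. Everything else is routine comparison, with $C$ depending only on $L_0$ and the fixed $\delta$.
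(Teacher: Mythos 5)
Your proposal is correct and is exactly the argument the paper leaves implicit (the lemma is stated as following ``immediately'' from the definitions of $\Ef$ and $\Ff$): the only non-trivial ingredient is that $\int\ve^{-1}\MH^{\ve}_{\nn}(P):P\,\ud\xx\ge\langle\ML(P),P\rangle=2\int f_e(\nabla P)\ge 2L_0\|\nabla P\|_{L^2}^2$ after dropping the pointwise nonnegative $\MH_{\nn}$ part, applied to $P=Q_R,\partial_iQ_R,\Delta Q_R$, with the remaining norms read off termwise and $\|\nabla^2f\|_{L^2}=\|\Delta f\|_{L^2}$, $\|\nabla^3f\|_{L^2}=\|\nabla\Delta f\|_{L^2}$ by Plancherel. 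You also correctly read the paper's (\ref{L:postive}) as $L_0\|\nabla Q\|_{L^2}^2$ (the missing square there is a typo, as $f_e$ is quadratic in $\nabla Q$).
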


In order to establish the estimates of the remainder terms $(\FF_R,\GG_R)$ and $\GG'_R$, it is desirable to utilize the following inequality:
\begin{align}\label{simple-ineq}
\|fg\|_{H^k}\leq C\|f\|_{H^2}\|g\|_{H^k},~~k=0,1,2.
\end{align}
\begin{lemma}\label{FFR-goodterm}
For the remainder term $\FF_R$, the following estimate holds
\begin{align*}
\|(\FF_R,\ve\nabla\FF_R,\ve^2\Delta\FF_R)\|_{L^2}\leq C(1+\Ef^{\frac12}+\ve\Ef+\ve^3\Ef^{\frac32}+\ve\Ff^{\frac12}+\ve\Ef^{\frac12}\Ff^{\frac12}).
\end{align*}
\end{lemma}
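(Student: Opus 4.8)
The plan is to estimate $\FF_R$ term by term using the decomposition $\FF_R=\FF_1+\FF_2+\FF_3+\FF_4+\FF_5$ together with the auxiliary term $\widetilde\FF_R$ treated separately (note that the statement concerns $\FF_R$ only; the second-derivative term $\widetilde\FF_R$ is handled in a later lemma). The basic tool is the algebra-type inequality (\ref{simple-ineq}), $\|fg\|_{H^k}\le C\|f\|_{H^2}\|g\|_{H^k}$ for $k=0,1,2$, applied with the "known" Hilbert-expansion functions $Q_i,\vv_i$ ($0\le i\le 3$) playing the role of the $H^2$-bounded factor—this is legitimate because Proposition \ref{prop:Hilbert} gives $(\vv_i,\partial_tQ_i,\nabla Q_i)\in L^\infty([0,T];H^{k-4i})$ with $k\ge 20$, so all the $Q_i$, $\vv_i$ and enough of their derivatives are bounded in $H^2$ uniformly in $t$, and the constant $C$ absorbs these norms.

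First I would dispose of $\FF_1$: it depends only on the known functions $Q_i,\vv_i$ and on $\BB^\ve,\ML(Q_3)$, hence $\|(\FF_1,\ve\nabla\FF_1,\ve^2\Delta\FF_1)\|_{L^2}\le C$, contributing the $1$ on the right-hand side. Next, $\FF_2$ and $\FF_3$ depend linearly on $(\vv_R,Q_R)$: each summand is a product of a known $H^2$-function with one of $Q_R$, $\nabla Q_R$ (entering through $\widetilde\BOm$-type and $\MB,\MC$ terms) or $\vv_R$, $\nabla\vv_R$. Using (\ref{simple-ineq}) and Lemma \ref{lem:energy}, one gets $\|(\FF_2,\ve\nabla\FF_2,\ve^2\Delta\FF_2)\|_{L^2}\le C(\|Q_R\|_{H^1}+\|(\ve\nabla^2Q_R,\ve^2\nabla^3 Q_R)\|_{L^2})\le C\Ef^{1/2}$; and since $\FF_3$ involves $\vv_R\cdot\nabla\widetilde Q$, $\BOm_R\cdot\widehat Q^\ve$, etc.\ (with an extra $\ve$ in front of the $\BOm_R$ terms), the same reasoning gives $\|(\FF_3,\ve\nabla\FF_3,\ve^2\Delta\FF_3)\|_{L^2}\le C(\Ef^{1/2}+\ve\Ff^{1/2})$; the $\Ff^{1/2}$ appears because $\nabla\vv_R$, $\ve\nabla^2\vv_R$, $\ve^2\nabla^3\vv_R$ are controlled only by $\Ff^{1/2}$ in Lemma \ref{lem:energy}, and when we take a full $\Delta$ of $\ve\,\BOm_R\cdot\widehat Q^\ve$ the worst piece is $\ve\,\nabla\Delta\vv_R$, which carries the coefficient $\ve$ exactly.

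Then I turn to the genuinely nonlinear pieces $\FF_4$ and $\FF_5$. $\FF_5\sim\ve^2\MB(Q_R,Q_R)+c\ve^2\MC(Q_R,Q_R,\widetilde Q)+c\ve^5\MC(Q_R,Q_R,Q_R)$: applying (\ref{simple-ineq}) twice, the quadratic-in-$Q_R$ terms are bounded by $C\ve^2\|Q_R\|_{H^2}\cdot\|Q_R\|_{H^k}$-type expressions; but $\|Q_R\|_{H^2}$ is not directly in the energy—only $\|Q_R\|_{H^1}$, $\|\ve\nabla^2 Q_R\|_{L^2}$ are—so one writes $\ve^2\|Q_R\|_{H^2}\|Q_R\|_{H^2}=(\ve\|Q_R\|_{H^2})\cdot(\ve\|Q_R\|_{H^2})\le C\ve^2\Ef$ after noting $\ve\|Q_R\|_{H^2}\le\ve\|Q_R\|_{H^1}+\|\ve\nabla^2Q_R\|_{L^2}\le C\ve^{1/2}\Ef^{1/2}+C\Ef^{1/2}$; tracking powers of $\ve$ carefully (the extra $\ve$'s from $\FF_5$'s prefactor, and the $\ve^2$ weights in the $\nabla,\Delta$ parts of the norm) yields the $\ve\Ef$ contribution, while the cubic $\ve^5\MC(Q_R,Q_R,Q_R)$ gives $\ve^3\Ef^{3/2}$. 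Similarly $\FF_4=-\ve^3 J\vv_R\cdot\nabla(\vv_R\cdot\nabla\widetilde Q)-\ve^3\mu_1(\vv_R\cdot\nabla Q_R-[\BOm_R,Q_R])$: the first term is trilinear with one known factor and bounded by $C\ve^3\|\vv_R\|_{H^2}^2\le C\ve\Ef+C\ve\Ef^{1/2}\Ff^{1/2}$ (again because the top derivative of $\vv_R$ sits in $\Ff$, not $\Ef$), and the $\vv_R\cdot\nabla Q_R$ and $\BOm_R\cdot Q_R$ terms give contributions of order $\ve^3\Ef^{3/2}$ or, after redistributing one $\ve$, $\ve\Ef+\ve\Ff^{1/2}\Ef^{1/2}$. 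Collecting $\FF_1$ through $\FF_5$ gives exactly the claimed bound $C(1+\Ef^{1/2}+\ve\Ef+\ve^3\Ef^{3/2}+\ve\Ff^{1/2}+\ve\Ef^{1/2}\Ff^{1/2})$.

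The main obstacle is bookkeeping rather than conceptual: one must keep precise track of which norms of $Q_R$ and $\vv_R$ are controlled by $\Ef^{1/2}$ versus $\Ff^{1/2}$ (Lemma \ref{lem:energy} is the dictionary), and of exactly how many factors of $\ve$ each weighted derivative in the energy supplies, so that when a $\Delta$ falls on a product like $\ve^2\MB(Q_R,Q_R)$ the surviving power of $\ve$ is high enough to convert $\|Q_R\|_{H^3}$-type quantities (which are only available with an $\ve^2$ weight) into a bound by $\Ef$. A subtle point is that plain $\|Q_R\|_{H^2}$ and $\|\vv_R\|_{H^1}$ are \emph{not} in $\Ef$ without an accompanying power of $\ve$, so every appearance of such a norm in a nonlinear term must be paired with at least one $\ve$ coming from the term's own prefactor; checking that this is always the case for $\FF_4$ and $\FF_5$ (it is, since both carry $\ve^2$ or $\ve^3$ up front) is what makes the lemma true, and is the step that most deserves care.
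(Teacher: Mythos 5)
Your proposal is correct and follows essentially the same route as the paper: the term-by-term decomposition $\FF_R=\FF_1+\cdots+\FF_5$, the product inequality (\ref{simple-ineq}) with the Hilbert-expansion functions as the $H^2$-bounded factors, and Lemma \ref{lem:energy} as the dictionary between weighted norms of $(Q_R,\vv_R)$ and $\Ef^{1/2},\Ff^{1/2}$, with the same resulting bounds $C$, $C\Ef^{1/2}$, $C(\Ef^{1/2}+\ve\Ff^{1/2})$, $C\ve(\Ef+\Ef^{1/2}\Ff^{1/2})$ and $C\ve(\Ef+\ve^2\Ef^{3/2})$ for $\FF_1,\ldots,\FF_5$. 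The bookkeeping point you flag—that $\|Q_R\|_{H^2}$ and $\|\vv_R\|_{H^1}$ enter the energy only with an accompanying power of $\ve$, which must be supplied by the prefactors of $\FF_4,\FF_5$—is exactly the mechanism the paper relies on.
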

\begin{proof}
Applying Lemma \ref{lem:energy}, we see at once that
\begin{align}
&\|(\FF_1,\ve\nabla\FF_1,\ve^2\Delta\FF_1)\|_{L^2}\leq C,\nonumber\\
&\|(\FF_2,\ve\nabla\FF_2,\ve^2\Delta\FF_2)\|_{L^2}\leq C\Ef^{\frac12},\nonumber\\
&\|(\FF_3,\ve\nabla\FF_3,\ve^2\Delta\FF_3)\|_{L^2}\leq C(\Ef^{\frac12}+\ve\Ff^{\frac{1}{2}}).\nonumber
\end{align}
Using the inequality (\ref{simple-ineq}), we have
\begin{align*}
\|\FF_4\|_{H^k}\leq&
 C\ve\|\vv_R\|_{H^k}\big(\|\ve^2\nabla\vv_R\|_{H^2}
 +\|\ve^2\vv_R\|_{H^2}\big)\\
&+C\ve\|\vv_R\|_{H^k}\|\ve^2\nabla Q_R\|_{H^2}+C\ve^2\|\ve Q_R\|_{H^2}\|\nabla\vv_R\|_{H^k},
\end{align*}
which implies
\begin{align*}
\|(\FF_4,\ve\nabla\FF_4,\ve^2\Delta\FF_4)\|_{L^2}\leq C\ve(\Ef+\Ef^{\frac12}\Ff^{\frac12}).
\end{align*}
Similarly, from Lemma \ref{lem:energy} and (\ref{simple-ineq}) again, we can infer that
\begin{align*}
\|(\FF_5,\ve\nabla\FF_5,\ve^2\Delta\FF_5)\|_{L^2}\leq C\ve(\Ef+\ve^2\Ef^{\frac{3}{2}}).
\end{align*}
The proof is finished.
\end{proof}

\begin{lemma}\label{GG-lem}
For the remainder term $\GG_R$, the following estimates hold
\begin{align*}
&\|(\GG_R,\ve\nabla\GG_R,\ve^2\Delta\GG_R)\|_{L^2}\leq C(1+\Ef^{\frac12}+\ve\Ef+\ve^3\Ef^{\frac32}+\ve\Ff^{\frac12}+\ve^2\Ef^{\frac12}\Ff^{\frac12}
+\ve^4\Ef\Ff^{\frac12}),\\
&\|(\GG'_R,\ve\nabla\GG'_R,\ve^2\Delta\GG'_R)\|_{L^2}\leq C(1+\Ef^{\frac12}+\Ff^{\frac12}+\ve\Ef^{\frac12}\Ff^{\frac12}).
\end{align*}
\end{lemma}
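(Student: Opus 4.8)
The plan is to estimate each of the three pieces $\GG_1, \GG_2, \GG_3$ comprising $\GG_R$, together with $\GG'_R$, in the norm $\|(\cdot,\ve\nabla\cdot,\ve^2\Delta\cdot)\|_{L^2}$, using the algebra inequality (\ref{simple-ineq}) and the a priori control furnished by Lemma \ref{lem:energy}. The guiding principle is bookkeeping of powers of $\ve$: every factor $Q_R$ or $\vv_R$ that appears \emph{without} an accompanying derivative contributes $\Ef^{1/2}$, every spatial gradient on $Q_R$ (resp.\ $\vv_R$) that appears with a compensating power $\ve$ (resp.\ $\ve$) contributes $\Ef^{1/2}$ or $\Ff^{1/2}$ according to Lemma \ref{lem:energy}, and uncompensated gradients are what we must avoid. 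Since $\widetilde Q$, $\widehat Q^\ve$, $\widetilde\DD$, $\widetilde\BOm$ and all the $Q_i,\vv_i$ are known functions bounded in high Sobolev norms uniformly in $\ve$ by Proposition \ref{prop:Hilbert}, every product of such a factor with one of the $L^2$-type quantities above is handled by (\ref{simple-ineq}) with the known factor placed in $H^2$.

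First I would dispatch $\GG_1$, which is built entirely from the known functions $Q_i,\vv_i,\DD_i,\BOm_i$ and the summation ranges $i+j+k\geq 3$ or $i+j\geq 3$; every term carries a nonnegative power of $\ve$, and since all factors lie in $L^\infty_tH^{k-4i}$ with $k\geq 20$, one gets $\|(\GG_1,\ve\nabla\GG_1,\ve^2\Delta\GG_1)\|_{L^2}\leq C$ directly. Next, for $\GG_2$: it is linear in $(\vv_R,Q_R)$. The terms with no derivative on the remainder (e.g.\ $\widetilde\DD\cdot Q_R$, $[\widetilde\BOm,Q_R]$, $\sigma^d(\widetilde Q,Q_R)$ after noting $\sigma^d$ only differentiates $\widetilde Q$ and $Q_R$ once, the latter to be paired with the $\ve\nabla$ in the norm — in fact $\sigma^d(\widetilde Q,Q_R)$ contains $\nabla Q_R$, so in the plain $L^2$ slot it needs $\|\nabla Q_R\|_{L^2}$ which is \emph{not} in Lemma \ref{lem:energy}; this forces one to note that $\sigma^d(\widetilde Q,Q_R)$ actually appears inside $\nabla\cdot(\cdots)$ in (\ref{remainder-v-R}) but here we estimate it as written, so we use $\|Q_R\|_{H^1}\leq C\Ef^{1/2}$ which \emph{does} control $\nabla Q_R$ in $L^2$) — these contribute $C\Ef^{1/2}$. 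The terms with $\DD_R=\frac12(\nabla\vv_R+(\nabla\vv_R)^T)$ multiplied by $\ve\widehat Q^\ve$ carry one factor $\ve$, so $\ve\DD_R$ pairs with Lemma \ref{lem:energy}'s bound $\|\nabla\vv_R\|_{L^2}\leq C\Ff^{1/2}$ to give $C\ve\Ff^{1/2}$; applying $\ve\nabla$ and $\ve^2\Delta$ to these produces $\ve^2\Ef^{1/2}\Ff^{1/2}$-type terms via (\ref{simple-ineq}). Likewise $\mu_1[\ve\widehat Q^\ve,(\dot Q_R-[\BOm_R,Q_0])]$ uses $\|\dot Q_R\|_{L^2}\leq C\Ef^{1/2}$ and $\ve\|\nabla\vv_R\|_{L^2}\leq C\ve\Ff^{1/2}$; its derivatives need $\|\ve\nabla\dot Q_R\|_{L^2}\leq C\Ef^{1/2}$ and $\|\ve^2\nabla^2\vv_R\|_{L^2}\leq C\Ef^{1/2}$, again giving $\Ef^{1/2}+\ve\Ef^{1/2}\Ff^{1/2}$ after multiplying by the extra $\ve$ from $\widehat Q^\ve$. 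Collecting all contributions from $\GG_2$ yields $C(1+\Ef^{1/2}+\ve\Ff^{1/2}+\ve^2\Ef^{1/2}\Ff^{1/2})$.

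For $\GG_3$, every term carries an overall factor $\ve^3$ and is quadratic or cubic in $(\vv_R,Q_R)$; the worst terms are $\ve^3\DD_R\cdot Q_R\cdot Q_R$, $\ve^3 Q_R\cdot Q_R\cdot\widetilde\DD$, $\mu_1\ve^3[Q_R,(\vv_R\cdot\nabla Q_R-[\BOm_R,Q_R])]$ and the cubic $\ve^6$-pieces. Using (\ref{simple-ineq}) with one factor in $H^2$: e.g.\ $\|\ve^3\DD_R\cdot Q_R\cdot Q_R\|_{H^k}\leq C\ve\|\ve^2\nabla\vv_R\|_{H^k}\|Q_R\|_{H^2}^2$ — here the needed norms are $\|\ve^2\nabla\vv_R\|_{L^2},\|\ve^2\nabla^2\vv_R\|_{L^2},\|\ve^2\nabla^3\vv_R\|_{L^2}\leq C\Ef^{1/2}$ and $\|Q_R\|_{H^2}\leq C\ve^{-1}\Ef^{1/2}$ from the $\ve\nabla^2Q_R$ bound, producing $\ve\cdot\Ef^{1/2}\cdot\ve^{-2}\Ef = \ve^{-1}\Ef^{3/2}$ — this does \emph{not} match the claimed bound, so the correct reading must place $\DD_R$ (which has a derivative) in the high-norm slot with its own compensating $\ve^2$, and the two $Q_R$ factors in $H^2$ \emph{without} needing their second derivatives, i.e.\ use $\|Q_R\|_{H^1}\leq C\Ef^{1/2}$ only where $H^1$ suffices, which it does for the plain $L^2$ component; the $\ve\nabla$ and $\ve^2\Delta$ components then eat the extra derivatives against the $\ve^2$-weighted $Q_R$-norms. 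Carried through, $\GG_3$ contributes $C(\ve\Ef + \ve^3\Ef^{3/2} + \ve^2\Ef^{1/2}\Ff^{1/2} + \ve^4\Ef\Ff^{1/2})$. Finally $\GG'_R$ is a sum of known-function products (bounded by $C$), the linear term $\vv_R\cdot\nabla\tilde\vv$ (bounded by $C\Ff^{1/2}$ using $\|\nabla\vv_R\|_{L^2}\leq C\Ff^{1/2}$, and by $C\Ff^{1/2}$ for its derivatives via $\ve$-weighted bounds) plus $C\Ef^{1/2}$ from the zeroth-order $\vv_R$, and the quadratic $\ve^3\vv_R\cdot\nabla\vv_R$ (bounded by $C\ve\Ef^{1/2}\Ff^{1/2}$), giving the stated $C(1+\Ef^{1/2}+\Ff^{1/2}+\ve\Ef^{1/2}\Ff^{1/2})$.

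The main obstacle, and the place where care is genuinely required, is exactly the power-counting near the quadratic terms in $\GG_3$ that involve a derivative on one remainder factor: one must consistently decide which factor goes into the $H^2$-slot of (\ref{simple-ineq}) and which retains its derivatives, and verify that in each of the three components (plain, $\ve\nabla$, $\ve^2\Delta$) the compensating powers of $\ve$ supplied by the overall $\ve^3$ prefactor and by the $\ve$'s attached to $\widehat Q^\ve$ or to $\DD_R,\BOm_R$ exactly absorb the negative powers $\ve^{-1},\ve^{-2}$ hidden in $\|Q_R\|_{H^2},\|\nabla^2\vv_R\|_{L^2}$-type quantities from Lemma \ref{lem:energy}. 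Once that accounting is organized term-by-term, the rest is routine application of (\ref{simple-ineq}) and Lemma \ref{lem:energy}.
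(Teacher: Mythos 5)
Your overall strategy coincides with the paper's: bound $\GG_1$ by a constant, treat $\GG_2$ as linear in the remainder (your tally carries a harmless extra $\ve^2\Ef^{\frac12}\Ff^{\frac12}$ compared with the paper's $C(\Ef^{\frac12}+\ve\Ff^{\frac12})$), and reserve the delicate $\ve$-bookkeeping for $\GG_3$ and $\GG'_R$ via (\ref{simple-ineq}) and Lemma \ref{lem:energy}. The $\GG'_R$ accounting is fine.

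There is, however, a concrete error exactly at the spot you yourself flag as the main obstacle. You list $\ve^3\DD_R\cdot Q_R\cdot Q_R$ among the worst terms of $\GG_3$, correctly observe that your estimate of it produces quantities (such as $\ve^{-1}\Ef^{\frac32}$) incompatible with the stated bound, and then propose to repair this by reassigning which factor occupies the $H^2$ slot of (\ref{simple-ineq}) and by using only $\|Q_R\|_{H^1}$ on the undifferentiated factors. That repair does not close the gap: for a triple product one still needs two factors in $H^2$ (or an $L^6\times L^6\times L^6$ splitting, which costs $\|\nabla\vv_R\|_{H^1}\le C(\Ff^{\frac12}+\ve^{-1}\Ef^{\frac12})$ on the $\DD_R$ factor and $\|Q_R\|_{H^1}^2\le C\Ef$ on the others, yielding $\ve^2\Ef^{\frac32}+\ve^3\Ef\Ff^{\frac12}$), and no choice of slots makes $\ve^3\DD_R\cdot Q_R\cdot Q_R$ admissible. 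The actual resolution is that this term does not occur: in the definition of $\GG_3$ the quadratic-in-$Q_R$ companion of $\DD_R$ carries an additional $\ve^3$ inside the bracket, so the term is $\ve^3\cdot\ve^3\DD_R\cdot Q_R\cdot Q_R$, and then $\ve^6\|Q_R\|_{H^2}^2\,\ve^k\|\DD_R\|_{H^k}\le C\ve^4\Ef\Ff^{\frac12}$ gives precisely the worst term of the lemma. The genuinely $\ve^3$-weighted $\DD_R$-terms, namely $\ve^3\DD_R\cdot\widetilde{Q}\cdot Q_R$ and $\ve^3\DD_R\cdot Q_R\cdot\widetilde{Q}$, contain only one remainder factor besides $\DD_R$ and give $\ve^2\Ef^{\frac12}\Ff^{\frac12}$, in line with the paper's key display $\|\GG_3\|_{H^k}\le C\ve^2\|\ve Q_R\|_{H^2}(\cdots)+C\ve\|\ve^2\nabla Q_R\|_{H^2}(\cdots)$. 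So your final bounds are correct, but for the one term where the power counting is genuinely delicate your argument as written does not work; the fix is to correct the $\ve$-weight of the term, not the slot assignment.
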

\begin{proof}
It is straightforward to show from Lemma \ref{lem:energy} that
\begin{align*}
&\|(\GG_1,\ve\nabla\GG_1,\ve^2\Delta\GG_1)\|_{L^2}\leq C,\\
&\|(\GG_2,\ve\nabla\GG_2,\ve^2\Delta\GG_2)\|_{L^2}\leq C(\Ef^{\frac12}+\ve\Ff^{\frac12}).
\end{align*}
By the inequality (\ref{simple-ineq}), we obtain
\begin{align*}
\|\GG_3\|_{H^k}\leq& C\ve^2\|\ve Q_R\|_{H^2}\Big(\|\nabla\vv_R\|_{H^k}+\|Q_R\|_{H^k}+\ve^3\|Q_R:\DD_R\|_{H^k}
+\|\dot{Q}_R\|_{H^k}\\
&+\|\vv_R\|_{H^k}+\ve^3(\|\vv_R\cdot\nabla Q_R\|_{H^k}+\|\BOm_R\cdot Q_R\|_{H^k})\Big)\\
&+C\ve\|\ve^2\nabla Q_R\|_{H^2}(\|\vv_R\|_{H^k}+\|\nabla Q_R\|_{H^k}),
\end{align*}
which gives
\begin{align*}
\|(\GG_3,\ve\nabla\GG_3,\ve^2\Delta\GG_3)\|_{L^2}\leq C(\ve\Ef+\ve^3\Ef^{\frac32}+\ve^2\Ef^{\frac12}\Ff^{\frac12}
+\ve^4\Ef\Ff^{\frac12}).
\end{align*}
Then the conclusion follows.
\end{proof}

We point out that for $m=0,1,2$, highly singular terms $\langle\frac{1}{\ve}\MH^{\ve}_{\nn}(\partial^m_iQ_R),\partial^m_iQ_R\rangle$ in (\ref{Ef:energy}) come from the $L^2$-inner products $\langle\frac{1}{\ve}\partial^m_i\MH^{\ve}_{\nn}(Q_R),\partial^m_i\dot{Q}_R\rangle$. Fortunately, the following Lemma \ref{lem:singular} will play a crucial role in dealing with these singular terms, which makes the whole machinery work.
\begin{lemma}\label{lem:singular}
Assume that $(\vv_R,Q_R)$ is a smooth solution of the remainder system (\ref{remainder-Q-R})-(\ref{remainder-div}). Then for any $\delta>0$, there exists a constant $C$ depending on $\nn, \nabla_{t,x}\nn, \tilde{\vv}$ and $\widetilde{Q}$,
such that
\begin{align}
 \frac{1}{\ve}\Big\langle\dot{\overline{\nn\nn}}\cdot Q_R,Q_R\Big\rangle
&\leq-J\frac{\ud}{\ud t}\Big\langle\MH^{-1}_{\nn}(\dot{\overline{\nn\nn}}\cdot Q_R^{\top}),\dot{Q}_R+\vv_R\cdot\nabla Q^{\ve}\Big\rangle\nonumber\\
&\quad+C(1+\Ef+\ve^2\Ef^2)+(\delta+C\ve^2\Ef)\Ff,\label{lem:singular-1}\\
\frac{1}{\ve}\Big\langle(Q_R:\dot{\overline{\nn\nn}})\nn\nn,Q_R\Big\rangle
&\leq-J\frac{\ud}{\ud t}\Big\langle\MH^{-1}_{\nn}\big(Q_R^{\top}:\dot{\overline{\nn\nn}}(\nn\nn-\frac13\II)\big),
\dot{Q}_R+\vv_R\cdot\nabla Q^{\ve}\Big\rangle\nonumber\\
&\quad+C(1+\Ef+\ve^2\Ef^2)+(\delta+C\ve^2\Ef)\Ff,\label{lem:singular-2}
\end{align}
where $\dot{\overline{\nn\nn}}\eqdefa(\partial_t+\tilde{\vv}\cdot\nabla)(\nn\nn)$, $Q^{\ve}=\widetilde{Q}+\ve^3Q_R$ and $\MH^{-1}_{\nn}$ is defined by (\ref{HM-inverse}).
Moreover, for $m=1,2$, the following estimates hold
\begin{align}
\ve^{2m-1}\Big\langle\dot{\overline{\nn\nn}}\cdot \partial^m_iQ_R,\partial^m_iQ_R\Big\rangle
\leq&C\Ef,\label{m-nn-mQR-1}\\
\ve^{2m-1}\Big\langle(\partial^m_iQ_R:\dot{\overline{\nn\nn}})\nn\nn,\partial^m_iQ_R\Big\rangle
\leq&C\Ef,\label{m-nn-mQR-2}
\end{align}
where $\partial^m_i$ represents the $m$-th order partial derivative operator with respect to the component $x_i$.
\end{lemma}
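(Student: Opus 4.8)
The plan is to handle the two genuinely singular estimates \eqref{lem:singular-1}--\eqref{lem:singular-2} (the $m=0$ case) by trading a factor of $\ve^{-1}$ for a factor of $\MH_\nn^{-1}$ via the evolution equation \eqref{remainder-Q-R}, and then to dispatch the derivative versions \eqref{m-nn-mQR-1}--\eqref{m-nn-mQR-2} by a direct application of Lemma \ref{lem:energy}. For \eqref{m-nn-mQR-1}--\eqref{m-nn-mQR-2} the point is simply that $\dot{\overline{\nn\nn}}$ and $\nn\nn$ are bounded in $L^\infty$ with bounded derivatives (they depend only on $\nn,\nabla_{t,x}\nn,\tilde\vv$), so $\ve^{2m-1}|\langle\dot{\overline{\nn\nn}}\cdot\partial_i^mQ_R,\partial_i^mQ_R\rangle|\le C\ve^{2m-1}\|\partial_i^mQ_R\|_{L^2}^2$; since Lemma \ref{lem:energy} gives $\|\ve\nabla^2 Q_R\|_{L^2}+\|\ve^2\nabla^3Q_R\|_{L^2}\le C\Ef^{1/2}$ and $\|Q_R\|_{H^1}\le C\Ef^{1/2}$, we get $\ve\|\partial_iQ_R\|_{L^2}^2\le C\Ef$ (for $m=1$, using $\ve\le 1$ and $\|\partial_iQ_R\|_{L^2}\le C\Ef^{1/2}$) and $\ve^3\|\partial_i^2Q_R\|_{L^2}^2 = \ve\|\ve\nabla^2Q_R\|_{L^2}^2 \le C\Ef$ (for $m=2$). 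This already yields \eqref{m-nn-mQR-1}, and \eqref{m-nn-mQR-2} is identical since $(Q_R:\dot{\overline{\nn\nn}})\nn\nn$ also has an $L^\infty$ coefficient.

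The core of the lemma is \eqref{lem:singular-1}. The key algebraic observation is that $\dot{\overline{\nn\nn}}\cdot Q_R$ pairs trivially with the part of $Q_R$ in $(\mathrm{Ker}\MH_\nn)^\perp$ after one integration, and more importantly that $\dot{\overline{\nn\nn}}\cdot Q_R^{\top}\in\mathrm{Ker}\MH_\nn$-orthogonal complement so that $\MH_\nn^{-1}$ applies; we rewrite
\[
\frac1\ve\big\langle\dot{\overline{\nn\nn}}\cdot Q_R,Q_R\big\rangle
=\frac1\ve\big\langle \MH_\nn\MH_\nn^{-1}(\dot{\overline{\nn\nn}}\cdot Q_R^{\top}),Q_R\big\rangle + (\text{lower order}),
\]
and then, using self-adjointness of $\MH_\nn$ on $(\mathrm{Ker}\MH_\nn)^\perp$, move $\MH_\nn$ onto $Q_R$. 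Now $\frac1\ve\MH_\nn(Q_R)$ is exactly the singular term in \eqref{remainder-Q-R}: from the remainder equation
\[
\tfrac1\ve\MH^\ve_\nn(Q_R) = -J\ddot Q_R - \mu_1\dot Q_R - \tfrac{\mu_2}{2}\DD_R + \mu_1[\BOm_R,Q_0] + \FF_R + \widetilde\FF_R,
\]
so $\tfrac1\ve\langle\MH_\nn(Q_R),\MH_\nn^{-1}(\dot{\overline{\nn\nn}}\cdot Q_R^{\top})\rangle$ becomes a sum of pairings of $\MH_\nn^{-1}(\dot{\overline{\nn\nn}}\cdot Q_R^{\top})$ — which is controlled in $L^2$ by $C\|Q_R\|_{L^2}\le C\Ef^{1/2}$ — against $-J\ddot Q_R$, $-\mu_1\dot Q_R$, $-\tfrac{\mu_2}{2}\DD_R+\mu_1[\BOm_R,Q_0]$, $\FF_R$, and $\widetilde\FF_R$. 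The terms with $\dot Q_R$, $\DD_R$, $[\BOm_R,Q_0]$, $\FF_R$, $\widetilde\FF_R$ are estimated by Cauchy--Schwarz using Lemmas \ref{lem:energy}, \ref{FFR-goodterm}, \ref{GG-lem}: the $\DD_R$ and $[\BOm_R,Q_0]$ pairings produce $\delta\Ff + C_\delta\Ef$, the $\FF_R$ pairing produces $C(1+\Ef+\ve^2\Ef^2)$ (the $\ve^3\Ef^{3/2}$-type terms in Lemma \ref{FFR-goodterm} multiply a factor $\Ef^{1/2}$ to give $\ve^3\Ef^2\le C(1+\ve^2\Ef^2)$ after Young, and similarly $\ve\Ef\cdot\Ef^{1/2}$ absorbs into $\ve^2\Ef^2+\Ef$), and the $\widetilde\FF_R$ pairing, being roughly $\ve^3\times(\text{quadratic in }\vv_R,Q_R)$, contributes $C\ve^2\Ef$-type or $\ve^2\Ef^2$-type terms plus a $C\ve^2\Ef\cdot\Ff$ term.

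The main obstacle — and the reason the $\ud/\ud t$ term appears on the right of \eqref{lem:singular-1} — is the inertial term $-J\langle\ddot Q_R,\MH_\nn^{-1}(\dot{\overline{\nn\nn}}\cdot Q_R^{\top})\rangle$, which cannot be bounded directly since $\ddot Q_R$ is not controlled by $\Ef$. The device is to integrate by parts in $t$ along the flow $(\partial_t+\tilde\vv\cdot\nabla)$, exactly as in \eqref{ddQ-Q}: writing $\ddot Q_R=(\partial_t+\tilde\vv\cdot\nabla)\dot Q_R$ and noting $\nabla\cdot\tilde\vv=0$, one gets
\[
-J\big\langle\ddot Q_R,\MH_\nn^{-1}(\dot{\overline{\nn\nn}}\cdot Q_R^{\top})\big\rangle
=-J\frac{\ud}{\ud t}\big\langle\dot Q_R,\MH_\nn^{-1}(\dot{\overline{\nn\nn}}\cdot Q_R^{\top})\big\rangle
+J\big\langle\dot Q_R,(\partial_t+\tilde\vv\cdot\nabla)\MH_\nn^{-1}(\dot{\overline{\nn\nn}}\cdot Q_R^{\top})\big\rangle.
\]
In the commutator term $(\partial_t+\tilde\vv\cdot\nabla)\MH_\nn^{-1}(\dot{\overline{\nn\nn}}\cdot Q_R^{\top})$, the derivative hitting $\MH_\nn^{-1}$ or $\dot{\overline{\nn\nn}}$ gives smooth-coefficient times $Q_R$, controlled by $\Ef^{1/2}$, while the derivative hitting $Q_R^{\top}$ gives $\dot Q_R^{\top}$-type terms, again controlled by $\Ef^{1/2}$ via Lemma \ref{lem:singular}-type identities for the projections (or directly $\|\dot Q_R\|_{L^2}\le C\Ef^{1/2}$ up to smooth-coefficient corrections). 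Pairing with $\|\dot Q_R\|_{L^2}\le C\Ef^{1/2}$ yields $C\Ef$. Finally one must absorb $\vv_R\cdot\nabla Q^\ve$ into the $\ud/\ud t$ bracket to match the stated right-hand side — this is just the bookkeeping that replaces $\dot Q_R$ by $\dot Q_R+\vv_R\cdot\nabla Q^\ve$ at the cost of another $\langle \vv_R\cdot\nabla Q^\ve,\cdots\rangle$ term bounded by $C\Ef+C\ve^6\Ef^2$ (using $\|\nabla Q^\ve\|\le C+\ve^3\|\nabla Q_R\|$ and $\ve\nabla Q_R$, $\ve^2\nabla^2Q_R$ bounds from Lemma \ref{lem:energy}); the $\ve^2\Ef\cdot\Ff$ correction there comes from the $\nabla\vv_R$ hidden in those pairings after one more integration by parts. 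Estimate \eqref{lem:singular-2} is proved by the identical scheme with $\dot{\overline{\nn\nn}}\cdot Q_R^{\top}$ replaced by $(Q_R^{\top}:\dot{\overline{\nn\nn}})(\nn\nn-\frac13\II)$, which lies in $(\mathrm{Ker}\MH_\nn)^\perp$ for the same reason, so $\MH_\nn^{-1}$ again applies.
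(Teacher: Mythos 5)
Your proposal follows the paper's proof essentially verbatim: the kernel decomposition $Q_R=Q_R^{\top}+Q_R^{\bot}$, coercivity of $\MH_{\nn}$ on $(\mathrm{Ker}\,\MH_{\nn})^{\perp}$ for the $\bot$--$\bot$ term, the identity $\frac1\ve\langle\dot{\overline{\nn\nn}}\cdot Q_R^{\top},Q_R^{\bot}\rangle=\langle\MH^{-1}_{\nn}(\dot{\overline{\nn\nn}}\cdot Q_R^{\top}),\frac1\ve\MH_{\nn}(Q_R)\rangle$ combined with substituting the evolution equation (\ref{remainder-Q-R}), and integration by parts along $\partial_t+\tilde{\vv}\cdot\nabla$ for the $J\ddot{Q}_R$ and $\widetilde{\FF}_R$ pairings are exactly the paper's steps, and your direct bound for $m=1,2$ is valid (indeed slightly more elementary than the paper's, which re-decomposes $\partial_i^mQ_R$). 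The only imprecision is describing $\widetilde{\FF}_R$ as ``$\ve^3\times$ quadratic'': its leading part $-J(\partial_t+\tilde{\vv}\cdot\nabla)(\vv_R\cdot\nabla\widetilde{Q})$ is $O(1)$ and is precisely the source of the $\vv_R\cdot\nabla\widetilde{Q}$ piece of the $\vv_R\cdot\nabla Q^{\ve}$ bracket that you correctly insert at the end.
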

\begin{proof}
We assume $Q_R=Q_R^{\top}+Q_R^{\bot}$ with $Q_R^{\top}\in\text{Ker}\MH_{\nn}$ and $Q_R^{\bot}\in(\text{Ker}\MH_{\nn})^{\perp}$. Then we obtain
\begin{align*}
\langle\dot{\overline{\nn\nn}}\cdot Q_R,Q_R\rangle
=&\langle\dot{\overline{\nn\nn}}\cdot Q_R^{\top},Q_R^{\top}\rangle
+{2}\langle\dot{\overline{\nn\nn}}\cdot Q_R^{\top},Q_R^{\bot}\rangle
+\langle\dot{\overline{\nn\nn}}\cdot Q_R^{\bot},Q_R^{\bot}\rangle.
\end{align*}
Note that there holds $\dot{\overline{\nn\nn}}\cdot Q_R^{\top}\in (\text{Ker}\MH_{\nn})^{\perp}$ since
$\dot{\overline{\nn\nn}}=\dot{\nn}\nn+\nn\dot{\nn}\in\text{Ker}\MH_{\nn}$.
Then we have $\frac{1}{\ve}\langle\dot{\overline{\nn\nn}}\cdot Q_R^{\top},Q_R^{\top}\rangle=0$. Using Proposition \ref{linearized-oper-prop}, it follows that
\begin{align*}
\frac{1}{\ve}\langle\dot{\overline{\nn\nn}}\cdot Q_R^{\bot},Q_R^{\bot}\rangle
&\leq C\frac{1}{\ve}\|Q_R^{\bot}\|^2_{L^2}
\leq C\frac{1}{\ve}\langle\MH_{\nn}(Q_R),Q_R\rangle\\
&\leq C\Big(\frac{1}{\ve}\langle\MH^{\ve}_{\nn}(Q_R),Q_R\rangle
-\langle\ML(Q_R),Q_R\rangle\Big)\leq C\Ef.
\end{align*}
It can be seen from Proposition \ref{linearized-oper-prop} that
\begin{align*}
\frac{1}{\ve}\langle\dot{\overline{\nn\nn}}\cdot Q_R^{\top},Q_R^{\bot}\rangle
&=\Big\langle\MH^{-1}_{\nn}(\dot{\overline{\nn\nn}}\cdot Q_R^{\top}),\frac{1}{\ve}\MH_{\nn}(Q_R)\Big\rangle\\
&=\Big\langle\MH^{-1}_{\nn}(\dot{\overline{\nn\nn}}\cdot Q_R^{\top}),\frac{1}{\ve}\MH^{\ve}_{\nn}(Q_R)\Big\rangle
-\Big\langle\MH^{-1}_{\nn}(\dot{\overline{\nn\nn}}\cdot Q_R^{\top}),\ML(Q_R)\Big\rangle\\
&\leq
\Big\langle\MH^{-1}_{\nn}(\dot{\overline{\nn\nn}}\cdot Q_R^{\top}),\frac{1}{\ve}\MH^{\ve}_{\nn}(Q_R)\Big\rangle
+C(\|\nabla Q_R\|^2_{L^2}+\|Q_R\|^2_{L^2}).
\end{align*}
From the equation (\ref{remainder-Q-R}), we have
\begin{align*}
&\Big\langle\MH^{-1}_{\nn}(\dot{\overline{\nn\nn}}\cdot Q_R^{\top}),\frac{1}{\ve}\MH^{\ve}_{\nn}(Q_R)\Big\rangle\\
&=\underbrace{-J\Big\langle\MH^{-1}_{\nn}(\dot{\overline{\nn\nn}}\cdot Q_R^{\top}),\ddot{Q}_R\Big\rangle}_{\mathcal{M}_1}
\underbrace{-\mu_1\Big\langle\MH^{-1}_{\nn}(\dot{\overline{\nn\nn}}\cdot Q_R^{\top}),\dot{Q}_R\Big\rangle}_{\mathcal{M}_2}\\
&\quad
\underbrace{-\frac{\mu_2}{2}\Big\langle\MH^{-1}_{\nn}(\dot{\overline{\nn\nn}}\cdot Q_R^{\top}),\DD_R\Big\rangle}_{\mathcal{M}_3}
+\underbrace{\mu_1\Big\langle\MH^{-1}_{\nn}(\dot{\overline{\nn\nn}}\cdot Q_R^{\top}),[\BOm_R,Q_0]\Big\rangle}_{\mathcal{M}_4}\\
&\quad
+\underbrace{\Big\langle\MH^{-1}_{\nn}(\dot{\overline{\nn\nn}}\cdot Q_R^{\top}),\FF_R\Big\rangle}_{\mathcal{M}_5}
+\underbrace{\Big\langle\MH^{-1}_{\nn}(\dot{\overline{\nn\nn}}\cdot Q_R^{\top}),\widetilde{\FF}_R\Big\rangle}_{\mathcal{M}_6}.
\end{align*}
Using integration by parts, we get
\begin{align*}
\mathcal{M}_1=&-J\frac{\ud}{\ud t}\Big\langle\MH^{-1}_{\nn}(\dot{\overline{\nn\nn}}\cdot Q_R^{\top}),\dot{Q}_R\Big\rangle
+J\Big\langle(\partial_t+\tilde{\vv}\cdot\nabla)\MH^{-1}_{\nn}(\dot{\overline{\nn\nn}}\cdot Q_R^{\top}),\dot{Q}_R\Big\rangle\\
\leq&-J\frac{\ud}{\ud t}\Big\langle\MH^{-1}_{\nn}(\dot{\overline{\nn\nn}}\cdot Q_R^{\top}),\dot{Q}_R\Big\rangle
+C(\|Q_R\|^2_{L^2}+\|\dot{Q}_R\|^2_{L^2}).
\end{align*}
From Lemma \ref{lem:energy}, we can easily estimate that
\begin{align*}
\mathcal{M}_2\leq&C\|Q_R\|_{L^2}\|\dot{Q}_R\|_{L^2}\leq C\Ef,\\
\mathcal{M}_3+\mathcal{M}_4\leq&C\|Q_R\|_{L^2}\|\nabla\vv_R\|\leq C\Ef^{\frac12}\Ff^{\frac12},\\
\mathcal{M}_5\leq &C\|Q_R\|_{L^2}\|\FF_R\|_{L^2}\leq C\Ef^{\frac12}\|\FF_R\|_{L^2}.
\end{align*}
For the term $\mathcal{M}_6$, we have
\begin{align*}
\mathcal{M}_6=&-J\frac{\ud}{\ud t}\Big\langle\MH^{-1}_{\nn}(\dot{\overline{\nn\nn}}\cdot Q_R^{\top}),
\vv_R\cdot\nabla\widetilde{Q}+\ve^3\vv_R\cdot\nabla Q_R\Big\rangle\\
&+\underbrace{J\Big\langle(\partial_t+\tilde{\vv}\cdot\nabla)\MH^{-1}_{\nn}(\dot{\overline{\nn\nn}}\cdot Q_R^{\top}),
\vv_R\cdot\nabla\widetilde{Q}\Big\rangle}_{\widetilde{\mathcal{M}}_1}\\
&+\underbrace{\ve^3J\Big\langle(\partial_t+\tilde{\vv}\cdot\nabla)\MH^{-1}_{\nn}(\dot{\overline{\nn\nn}}\cdot Q_R^{\top}),
\vv_R\cdot\nabla Q_R\Big\rangle}_{\widetilde{\mathcal{M}}_2}\\
&\underbrace{+\ve^3J\Big\langle(\vv_R\cdot\nabla)\MH^{-1}_{\nn}(\dot{\overline{\nn\nn}}\cdot Q_R^{\top}),
 \dot{Q}_R\Big\rangle}_{\widetilde{\mathcal{M}}_3}\\
&+\underbrace{\ve^6J\Big\langle(\vv_R\cdot\nabla)\MH^{-1}_{\nn}(\dot{\overline{\nn\nn}}\cdot Q_R^{\top}),
\vv_R\cdot\nabla Q_R\Big\rangle}_{\widetilde{\mathcal{M}}_4}.
\end{align*}
Using Lemma \ref{lem:energy}, we can infer that
\begin{align*}
\widetilde{\mathcal{M}}_1\leq&C(\|Q_R\|_{L^2}+\|\dot{Q}\|_{L^2})\|\vv_R\|_{L^2}
\leq C\Ef,\\
\widetilde{\mathcal{M}}_2\leq&C\ve^3(\|Q_R\|_{L^2}+\|\dot{Q}\|_{L^2})\|\vv_R\|_{H^2}\|\nabla Q_R\|_{L^2}
\leq C\ve\Ef^{\frac32},\\
\widetilde{\mathcal{M}}_3\leq&C\ve^3\|\vv_R\|_{H^2}\|Q_R\|_{H^1}\|\dot{Q}_R\|_{L^2}
\leq C\ve\Ef^{\frac32},\\
\widetilde{\mathcal{M}}_4\leq&C\ve^6\|\vv_R\|^2_{H^2}\|Q_R\|^2_{H^1}
\leq C\ve^2\Ef^2.
\end{align*}
Thus, we obtain the following estimate
\begin{align*}
\frac{1}{\ve}\langle\dot{\overline{\nn\nn}}\cdot Q_R,Q_R\rangle
&\leq-J\frac{\ud}{\ud t}\Big\langle\MH^{-1}_{\nn}(\dot{\overline{\nn\nn}}\cdot Q_R^{\top}),\dot{Q}_R\Big\rangle
-J\frac{\ud}{\ud t}\Big\langle\MH^{-1}_{\nn}(\dot{\overline{\nn\nn}}\cdot Q_R^{\top}),
\vv_R\cdot\nabla Q^{\ve}\Big\rangle\\
&\quad
+C(1+\Ef+\ve^2\Ef^2)+(\delta+C\ve^2\Ef)\Ff.
\end{align*}

Similarly, we get
\begin{align*}
\langle Q_R:\dot{\overline{\nn\nn}},Q_R:\nn\nn\rangle
=\langle Q^{\top}_R:\dot{\overline{\nn\nn}},Q^{\bot}_R:\nn\nn\rangle
+\langle Q^{\bot}_R:\dot{\overline{\nn\nn}},Q^{\bot}_R:\nn\nn\rangle.
\end{align*}
Therefore, the analogous argument leads to the second estimate (\ref{lem:singular-2}).

For the case of $m=1$ in (\ref{m-nn-mQR-1}) and (\ref{m-nn-mQR-2}), we first
assume that $\partial_iQ_R=(\partial_iQ_R)^{\top}+(\partial_iQ_R)^{\bot}$ with $(\partial_iQ_R)^{\top}\in\text{Ker}\MH_{\nn}$ and $(\partial_iQ_R)^{\bot}\in(\text{Ker}\MH_{\nn})^{\perp}$.
Then we have
\begin{align}\label{ve-nQR-pQR}
\big\langle\dot{\overline{\nn\nn}}\cdot \partial_iQ_R,\partial_iQ_R\big\rangle
=&
2\big\langle\dot{\overline{\nn\nn}}\cdot (\partial_iQ_R)^{\top},(\partial_iQ_R)^{\bot}\big\rangle
+\big\langle\dot{\overline{\nn\nn}}\cdot (\partial_iQ_R)^{\bot},(\partial_iQ_R)^{\bot}\big\rangle.
\end{align}
By Proposition \ref{linearized-oper-prop}, the third term in (\ref{ve-nQR-pQR}) can be estimated as
\begin{align*}
\ve\big\langle\dot{\overline{\nn\nn}}\cdot (\partial_iQ_R)^{\bot},(\partial_iQ_R)^{\bot}\big\rangle
&\leq C\ve\|(\partial_iQ_R)^{\bot}\|^2_{L^2}
\leq C\ve\langle\MH_{\nn}(\partial_iQ_R),\partial_iQ_R\rangle\\
&\leq C\Big(\ve\langle\MH^{\ve}_{\nn}(\partial_iQ_R),\partial_iQ_R\rangle
-\ve^2\langle\ML(\partial_iQ_R),\partial_iQ_R\rangle\Big)\\
&\leq C\Ef.
\end{align*}
For the second term in (\ref{ve-nQR-pQR}), using Proposition \ref{linearized-oper-prop}, we obtain
\begin{align*}
\ve\big\langle\dot{\overline{\nn\nn}}\cdot (\partial_iQ_R)^{\top},(\partial_iQ_R)^{\bot}\big\rangle
&=\ve\Big\langle\MH^{-1}_{\nn}\big(\dot{\overline{\nn\nn}}\cdot (\partial_iQ_R)^{\bot}\big),\MH_{\nn}(\partial_iQ_R)\Big\rangle\\
&\leq C\ve(\|(\partial_iQ_R)^{\bot}\|^2_{L^2}+\|\partial_iQ_R\|^2_{L^2})\leq C\Ef.
\end{align*}
Likewise, we can prove that
\begin{align*}
\ve\big\langle\partial_iQ_R:\dot{\overline{\nn\nn}},\partial_iQ_R:\nn\nn\big\rangle
\leq C\Ef.
\end{align*}

For the case of $m=2$, we suppose that $\Delta Q_R=(\Delta Q_R)^{\top}+(\Delta Q_R)^{\bot}$ with $(\Delta Q_R)^{\top}\in\text{Ker}\MH_{\nn}$
and $(\Delta Q_R)^{\bot}\in(\text{Ker}\MH_{\nn})^{\perp}$. Adopting an analogous argument yields (\ref{m-nn-mQR-1}) and (\ref{m-nn-mQR-2}) for $m=2$.
\end{proof}

We next deal with the estimates for the remainder term $\widetilde{\FF}_R$. For convenience, the remainder term $\widetilde{\FF}_R$, involving the derivatives with respect to time $t$, is denoted by
\begin{align*}
\widetilde{\FF}_R=&-J(\partial_t+\tilde{\vv}\cdot\nabla)(\vv_R\cdot\nabla\widetilde{Q})\\
&+\ve^3J\Big(-(\partial_t+\tilde{\vv}\cdot\nabla)(\vv_R\cdot\nabla Q_R)-\vv_R\cdot\nabla \dot{Q}_R
-\ve^3\vv_R\cdot\nabla(\vv_R\cdot\nabla Q_R)\Big)\\
\eqdefa&~\widetilde{\FF}_1+\widetilde{\FF}_2.
\end{align*}

\begin{lemma}\label{lem:FR-1}
For the remainder term $\widetilde{\FF}_R$, it follows that
\begin{align}
\langle\widetilde{\FF}_R,Q_R\rangle\leq
-J\frac{\ud}{\ud t}\langle\vv_R\cdot\nabla\widetilde{Q},Q_R\rangle+C(\Ef+\ve^2\Ef^2).
\label{FFR-QR}
\end{align}
\end{lemma}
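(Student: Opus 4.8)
The plan is to use the decomposition $\widetilde{\FF}_R=\widetilde{\FF}_1+\widetilde{\FF}_2$ recorded just above the statement, pair each of $\widetilde{\FF}_1,\widetilde{\FF}_2$ with $Q_R$ in the $L^2$ inner product, and transfer every derivative either onto the known profiles $\widetilde{Q},\tilde{\vv}$ or onto $\dot{Q}_R$ by integration by parts. The only structural input beyond Lemma \ref{lem:energy} is that $\tilde{\vv}=\sum_{k=0}^{2}\ve^k\vv_k$ and $\vv_R$ are both divergence free, so that $\langle\tilde{\vv}\cdot\nabla A,B\rangle=-\langle A,\tilde{\vv}\cdot\nabla B\rangle$ and $\langle\vv_R\cdot\nabla A,B\rangle=-\langle A,\vv_R\cdot\nabla B\rangle$ for tensor fields $A,B$; in particular $\langle\vv_R\cdot\nabla Q_R,Q_R\rangle=\tfrac12\int_{\BR}\vv_R\cdot\nabla|Q_R|^2\,\ud\xx=0$.

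First I would handle $\widetilde{\FF}_1=-J(\partial_t+\tilde{\vv}\cdot\nabla)(\vv_R\cdot\nabla\widetilde{Q})$: integrating by parts in $t$ and using $\nabla\cdot\tilde{\vv}=0$ to move the material derivative off $\vv_R\cdot\nabla\widetilde{Q}$ and onto $Q_R$ gives
\[
\langle\widetilde{\FF}_1,Q_R\rangle=-J\frac{\ud}{\ud t}\langle\vv_R\cdot\nabla\widetilde{Q},Q_R\rangle+J\big\langle\vv_R\cdot\nabla\widetilde{Q},\dot{Q}_R\big\rangle .
\]
The first term on the right is exactly the total-derivative term in (\ref{FFR-QR}); since $\widetilde{Q}=\sum_{k=0}^{3}\ve^kQ_k$ satisfies $\|\nabla\widetilde{Q}\|_{L^\infty}\leq C$ uniformly in $\ve$, the second term is bounded by $C\|\vv_R\|_{L^2}\|\dot{Q}_R\|_{L^2}\leq C\Ef$ by Lemma \ref{lem:energy}.

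Next I would treat $\widetilde{\FF}_2=\ve^3J\big(-(\partial_t+\tilde{\vv}\cdot\nabla)(\vv_R\cdot\nabla Q_R)-\vv_R\cdot\nabla\dot{Q}_R-\ve^3\vv_R\cdot\nabla(\vv_R\cdot\nabla Q_R)\big)$, pairing with $Q_R$ piece by piece. For the first piece, $\langle(\partial_t+\tilde{\vv}\cdot\nabla)(\vv_R\cdot\nabla Q_R),Q_R\rangle=\frac{\ud}{\ud t}\langle\vv_R\cdot\nabla Q_R,Q_R\rangle-\langle\vv_R\cdot\nabla Q_R,\dot{Q}_R\rangle$, and the key point is that $\langle\vv_R\cdot\nabla Q_R,Q_R\rangle=0$, so the potentially dangerous total-derivative contribution disappears and only $\ve^3J\langle\vv_R\cdot\nabla Q_R,\dot{Q}_R\rangle$ survives. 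For the second piece, moving $\vv_R\cdot\nabla$ off $\dot{Q}_R$ produces another $\ve^3J\langle\vv_R\cdot\nabla Q_R,\dot{Q}_R\rangle$, and for the third piece integration by parts gives $\ve^6J\|\vv_R\cdot\nabla Q_R\|_{L^2}^2$. To bound these I would use (\ref{simple-ineq}) together with Lemma \ref{lem:energy} in the form $\|\vv_R\|_{H^2}\leq C\ve^{-2}\Ef^{1/2}$ and $\|\nabla Q_R\|_{L^2}+\|\dot{Q}_R\|_{L^2}\leq C\Ef^{1/2}$, whence $\ve^3\|\vv_R\cdot\nabla Q_R\|_{L^2}\leq C\ve\Ef$; this yields $\ve^3J|\langle\vv_R\cdot\nabla Q_R,\dot{Q}_R\rangle|\leq C\ve\Ef^{3/2}$ and $\ve^6J\|\vv_R\cdot\nabla Q_R\|_{L^2}^2\leq C\ve^2\Ef^2$, and the elementary inequality $\ve\Ef^{3/2}\leq\tfrac12\Ef+\tfrac12\ve^2\Ef^2$ then gives $\langle\widetilde{\FF}_2,Q_R\rangle\leq C(\Ef+\ve^2\Ef^2)$. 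Adding the two estimates produces (\ref{FFR-QR}).

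Everything apart from the bookkeeping is routine integration by parts and Cauchy--Schwarz; the two points that require care are keeping precise track of the negative powers of $\ve$ carried by the higher-order norms of $\vv_R$ in Lemma \ref{lem:energy}, and --- the genuine crux --- exploiting the incompressibility identity $\langle\vv_R\cdot\nabla Q_R,Q_R\rangle=0$, which is what prevents an uncontrolled term $\ve^3J\frac{\ud}{\ud t}\langle\vv_R\cdot\nabla Q_R,Q_R\rangle$ from surviving when the material derivative inside $\widetilde{\FF}_2$ is integrated by parts in time.
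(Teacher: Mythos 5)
Your proposal is correct and follows essentially the same route as the paper's proof: split $\widetilde{\FF}_R=\widetilde{\FF}_1+\widetilde{\FF}_2$, integrate the material derivative by parts in time for $\widetilde{\FF}_1$ to produce the total-derivative term plus $J\langle\vv_R\cdot\nabla\widetilde{Q},\dot{Q}_R\rangle\leq C\Ef$, and for $\widetilde{\FF}_2$ use the incompressibility identity $\langle\vv_R\cdot\nabla Q_R,Q_R\rangle=0$ to kill the dangerous $\ve^3J\frac{\ud}{\ud t}\langle\vv_R\cdot\nabla Q_R,Q_R\rangle$ term, leaving $2\ve^3J\langle\vv_R\cdot\nabla Q_R,\dot{Q}_R\rangle+\ve^6J\|\vv_R\cdot\nabla Q_R\|_{L^2}^2\leq C(\ve\Ef^{3/2}+\ve^2\Ef^2)\leq C(\Ef+\ve^2\Ef^2)$. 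The bookkeeping of the $\ve$-weights via Lemma \ref{lem:energy} matches the paper's bounds exactly.
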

\begin{proof}
Using integration by parts, it is easy to calculate that
\begin{align*}
\langle\widetilde{\FF}_1,Q_R\rangle=&-J
\Big\langle(\partial_t+\tilde{\vv}\cdot\nabla)(\vv_R\cdot\nabla\widetilde{Q}),Q_R\Big\rangle\\
=&-J\frac{\ud}{\ud t}\langle\vv_R\cdot\nabla\widetilde{Q},Q_R\rangle+J\langle\vv_R\cdot\nabla\widetilde{Q},\dot{Q}_R\rangle\\
\leq&-J\frac{\ud}{\ud t}\langle\vv_R\cdot\nabla\widetilde{Q},Q_R\rangle+C\Ef.
\end{align*}
In virtue of the incompressibility $\nabla\cdot\vv_R=0$, the following fact holds
\begin{align*}
\frac{\ud}{\ud t}\big\langle\vv_R\cdot\nabla Q_R,Q_R\big\rangle=0,
\end{align*}
which combines with Lemma \ref{lem:energy}, we get
\begin{align*}
\langle\widetilde{\FF}_2,Q_R\rangle=&-\ve^3J\Big\langle(\partial_t+\tilde{\vv}\cdot\nabla)(\vv_R\cdot\nabla Q_R),Q_R\Big\rangle
+\ve^3J\langle\vv_R\cdot\nabla Q_R,\dot{Q}_R\rangle\\
&+\ve^6J\langle\vv_R\cdot\nabla Q_R,\vv_R\cdot\nabla Q_R\rangle\\
=&-\ve^3J\frac{\ud}{\ud t}\big\langle\vv_R\cdot\nabla Q_R,Q_R\big\rangle+2\ve^3J\big\langle\vv_R\cdot\nabla Q_R,\dot{Q}_R\big\rangle\\
&+\ve^6J\langle\vv_R\cdot\nabla Q_R,\vv_R\cdot\nabla Q_R\rangle\\
\leq&C\ve^3\|\vv_R\|_{H^2}\|\nabla Q_R\|_{L^2}\|\dot{Q}_R\|_{L^2}
+C\ve^6\|\vv_R\|^2_{L^2}\|\nabla Q_R\|^2_{H^2}\\
\leq&C(\ve\Ef^{\frac32}+\ve^2\Ef^2).
\end{align*}
Consequently, we conclude the proof of the lemma.
\end{proof}

\begin{lemma}\label{lem:FR-2}
For the remainder term $\widetilde{\FF}_R$ and $m=0,1,2$, there holds
\begin{align}
&\ve^{2m}\langle\partial^m_i\widetilde{\FF}_R,\partial^m_i\dot{Q}_R\rangle\nonumber\\
&\leq
-\ve^{2m}\frac{J}{2}\frac{\ud}{\ud t}\big\|\partial^m_i(\vv_R\cdot\nabla Q^{\ve})\big\|^2_{L^2}
-\ve^{2m}J\frac{\ud}{\ud t}\Big\langle\partial^m_i(\vv_R\cdot\nabla Q^{\ve}),\partial^m_i\dot{Q}_R\Big\rangle\nonumber\\
&\quad
+C(1+\Ef+\ve^2\Ef^2+\ve^8\Ef^5)
+(\delta+C\ve^2\Ef)\Ff,\label{P3-FFR-dQR}
\end{align}
where $\partial^m_i$ represents the $m$-th order partial derivative operator with respect to the component $x_i$, and $Q^{\ve}=\widetilde{Q}+\ve^3Q_R$.
\end{lemma}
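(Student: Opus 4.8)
The plan is to reduce $\widetilde{\FF}_R$ to one ``master'' term plus harmless remainders, then exploit the remainder equation (\ref{remainder-Q-R}) to convert that master term into the two exact time derivatives appearing on the right of (\ref{P3-FFR-dQR}), modulo terms controlled by $\Ef$, $\Ff$.

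First I would rewrite $\widetilde{\FF}_R$ compactly. Since $Q^{\ve}=\widetilde{Q}+\ve^3Q_R$ gives $\vv_R\cdot\nabla Q^{\ve}=\vv_R\cdot\nabla\widetilde{Q}+\ve^3\vv_R\cdot\nabla Q_R$, the definition of $\widetilde{\FF}_R$ collapses to
\[
\widetilde{\FF}_R=-J(\partial_t+\tilde{\vv}\cdot\nabla)(\vv_R\cdot\nabla Q^{\ve})-\ve^3J\,\vv_R\cdot\nabla\dot{Q}_R-\ve^6J\,\vv_R\cdot\nabla(\vv_R\cdot\nabla Q_R).
\]
Writing $g:=\vv_R\cdot\nabla Q^{\ve}$, after applying $\ve^{2m}\partial^m_i$ and pairing with $\partial^m_i\dot{Q}_R$ the last two terms are genuinely lower order: for $m=0$ incompressibility $\nabla\cdot\vv_R=0$ kills the leading term (since $\ve^3J\langle\vv_R\cdot\nabla\dot{Q}_R,\dot{Q}_R\rangle=0$), and for all $m$ one uses Lemma \ref{lem:energy}, the product inequality (\ref{simple-ineq}) and Young's inequality, producing contributions of the type $C(\Ef+\ve^2\Ef^2+\ve^8\Ef^5)+\delta\Ff$. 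Thus the whole difficulty is the master term $-J\ve^{2m}\langle\partial^m_i(\partial_t+\tilde{\vv}\cdot\nabla)g,\partial^m_i\dot{Q}_R\rangle$.

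For the master term I would commute $\partial^m_i$ through $\partial_t+\tilde{\vv}\cdot\nabla$ (the commutator $[\partial^m_i,\tilde{\vv}\cdot\nabla]g$ carries at most $m$ spatial derivatives of $g$ with bounded coefficients and is lower order), then integrate by parts in $t$ via $\frac{\ud}{\ud t}\langle f,h\rangle=\langle(\partial_t+\tilde{\vv}\cdot\nabla)f,h\rangle+\langle f,(\partial_t+\tilde{\vv}\cdot\nabla)h\rangle$, valid because each $\vv_k$, hence $\tilde{\vv}$, is divergence free. This yields $-\ve^{2m}J\frac{\ud}{\ud t}\langle\partial^m_i g,\partial^m_i\dot{Q}_R\rangle$, the second time derivative of (\ref{P3-FFR-dQR}), plus $J\ve^{2m}\langle\partial^m_i g,(\partial_t+\tilde{\vv}\cdot\nabla)\partial^m_i\dot{Q}_R\rangle$. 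In the latter I replace $(\partial_t+\tilde{\vv}\cdot\nabla)\partial^m_i\dot{Q}_R$ by $\partial^m_i\ddot{Q}_R$ (again up to a lower-order commutator) and substitute $J\ddot{Q}_R$ from (\ref{remainder-Q-R}). The $\widetilde{\FF}_R$ term reappearing here, paired with $\partial^m_i g$, equals $-J\ve^{2m}\langle\partial^m_i g,\partial^m_i(\partial_t+\tilde{\vv}\cdot\nabla)g\rangle$ up to lower order, i.e. $-\frac{J}{2}\ve^{2m}\frac{\ud}{\ud t}\|\partial^m_i g\|_{L^2}^2$ plus lower order, which is precisely the first time derivative of (\ref{P3-FFR-dQR}); the $-\mu_1\dot{Q}_R$, $\mu_1[\BOm_R,Q_0]$, $-\tfrac{\mu_2}{2}\DD_R$ and $\FF_R$ contributions are bounded directly via Lemma \ref{lem:energy}, Lemma \ref{FFR-goodterm} and Young.

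The hard part will be the one remaining piece, the singular contribution $-\ve^{2m-1}\langle\partial^m_i g,\partial^m_i\MH^{\ve}_{\nn}(Q_R)\rangle$ coming from the $-\frac{1}{\ve}\MH^{\ve}_{\nn}(Q_R)$ term of $\ddot{Q}_R$, where $\MH^{\ve}_{\nn}=\MH_{\nn}+\ve\ML$. For $m=1,2$ the weight $\ve^{2m-1}\ge\ve$ already beats the single negative power: using that $\MH_{\nn}$ is self-adjoint (so $\langle\partial^m_i g,\MH_{\nn}(\partial^m_i Q_R)\rangle=\langle\MH_{\nn}(\partial^m_i g),\partial^m_i Q_R\rangle$ modulo lower-order $\nn$-commutators) and, for the $\ML$-part, one symmetric integration by parts — in which the only top-order piece is of transport form $\vv_R\cdot\nabla(\text{derivative of }Q_R)$ paired against that same derivative of $Q_R$, hence vanishes by $\nabla\cdot\vv_R=0$ — one reaches $\lesssim\Ef+(\delta+C\ve^2\Ef)\Ff$. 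For $m=0$ the weight is $\ve^{-1}$ and genuine structure is needed: since $\partial_i Q_0=s\,\partial_i(\nn\nn)=s(\partial_i\nn\,\nn+\nn\,\partial_i\nn)$ with $\partial_i\nn\cdot\nn=0$, we have $\partial_i Q_0\in\text{Ker}\MH_{\nn}$, hence $\vv_R\cdot\nabla Q_0\in\text{Ker}\MH_{\nn}$ pointwise and $\MP^{out}(g)=\MP^{out}\big(\vv_R\cdot\nabla(Q^{\ve}-Q_0)\big)$, which by Lemma \ref{lem:energy} and Sobolev embedding is $O(\ve)$ in $L^2$; since $\MH_{\nn}(g)=\MH_{\nn}(\MP^{out}g)$ with $|\MH_{\nn}(g)|\le C|\MP^{out}(g)|$ (Proposition \ref{linearized-oper-prop}), the $\ve$ gained cancels $\ve^{-1}$ and $\ve^{-1}\langle g,\MH^{\ve}_{\nn}(Q_R)\rangle\le C(\Ef+\ve^2\Ef^2)+(\delta+C\ve^2\Ef)\Ff$, the $\ML$-part being handled by the same one–integration–by–parts/incompressibility device. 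Collecting everything and using Young on the cross terms gives (\ref{P3-FFR-dQR}). Besides this singular term, the only other delicate point is the $m=2$ bookkeeping: several terms naively involve $\nabla^4Q_R$, which $\Ef$ does not control, so each must be reorganised by integration by parts so that the uncontrolled derivative lands either on the smooth profile $\widetilde{Q}$ or inside a transport term annihilated by $\nabla\cdot\vv_R=0$.
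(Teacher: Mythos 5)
Your proposal is correct and follows essentially the same route as the paper: integrate by parts in time, substitute the remainder equation for $J\ddot{Q}_R$, let the reappearing $\widetilde{\FF}_R$ produce the $-\tfrac{J}{2}\tfrac{\ud}{\ud t}\|\partial^m_i(\vv_R\cdot\nabla Q^{\ve})\|_{L^2}^2$ term, kill the $\ve^{-1}$ singularity via $\vv_R\cdot\nabla Q_0\in\mathrm{Ker}\,\MH_{\nn}$, and use $\nabla\cdot\vv_R=0$ together with integration by parts to avoid $\nabla^4Q_R$. The only difference is cosmetic: you work with $\vv_R\cdot\nabla Q^{\ve}$ from the outset, whereas the paper splits $\widetilde{\FF}_R=\widetilde{\FF}_1+\widetilde{\FF}_2$ and recombines the resulting time derivatives at the end via the identity $\|\vv_R\cdot\nabla\widetilde{Q}\|^2+2\ve^3\langle\vv_R\cdot\nabla\widetilde{Q},\vv_R\cdot\nabla Q_R\rangle+\ve^6\|\vv_R\cdot\nabla Q_R\|^2=\|\vv_R\cdot\nabla Q^{\ve}\|^2$.
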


\begin{proof}
We only provide here the arguments of (\ref{P3-FFR-dQR}) for the case $m=0$. We relegate the proof of the cases $m=1,2$ in (\ref{P3-FFR-dQR}) to Appendix so as not to destroy the main body of this paper.

Firstly,  we control the term $\langle\widetilde{\FF}_1, \dot{Q}_R\rangle$.
Note that there holds $\langle\vv_R\cdot\nabla Q_0,\MH_{\nn}(Q_R)\rangle=0$ since $\vv_R\cdot\nabla Q_0\in {\rm Ker}\MH_{\nn}$ and $\MH_{\nn}(Q_R)\in ({\rm Ker}\MH_{\nn})^{\bot}$. Then we have
\begin{align}\label{vv-MHQR}
\Big\langle\vv_R\cdot\nabla\widetilde{Q},\frac{1}{\ve}\MH^{\ve}_{\nn}(Q_R)\Big\rangle
=&\Big\langle\vv_R\cdot\nabla\widehat{Q}^{\ve},\MH_{\nn}(Q_R)+\ve\ML(Q_R)\Big\rangle\nonumber\\
\leq&C\|\vv_R\|_{L^2}(\|Q_R\|_{L^2}+\ve\|Q_R\|_{H^2}) \leq C\Ef,
\end{align}
where $\widetilde{Q}=Q_0+\ve\widehat{Q}^{\ve}=Q_0+\ve(Q_1+\ve Q_2+\ve^2 Q_3)$.

 From the equation (\ref{remainder-Q-R}) and the bound (\ref{vv-MHQR}), utilizing integration by parts and Lemma \ref{lem:energy} yields
\begin{align}\label{FFdotQ-R}
\langle\widetilde{\FF}_1,\dot{Q}_R\rangle=&-J\frac{\ud}{\ud t}\langle\vv_R\cdot\nabla\widetilde{Q},\dot{Q}_R\rangle
+J\langle\vv_R\cdot\nabla\widetilde{Q},\ddot{Q}_R\rangle\nonumber\\
=&-J\frac{\ud}{\ud t}\langle\vv_R\cdot\nabla\widetilde{Q},\dot{Q}_R\rangle
-\mu_1\langle\vv_R\cdot\nabla\widetilde{Q},\dot{Q}_R\rangle
-\Big\langle\vv_R\cdot\nabla\widetilde{Q},\frac{1}{\ve}\MH^{\ve}_{\nn}(Q_R)\Big\rangle\nonumber\\
&+\Big\langle\vv_R\cdot\nabla\widetilde{Q},-\frac{\mu_2}{2}\DD_R+\mu_1[\BOm_R,Q_0]\Big\rangle
+\langle\vv_R\cdot\nabla\widetilde{Q},\FF_R+\widetilde{\FF}_R\rangle\nonumber\\
\leq&-J\frac{\ud}{\ud t}\langle\vv_R\cdot\nabla\widetilde{Q},\dot{Q}_R\rangle
+C(\Ef+\Ef^{\frac12}\Ff^{\frac12})\nonumber\\
&+C\Ef^{\frac12}\|\FF_R\|_{L^2}
+\langle\vv_R\cdot\nabla\widetilde{Q},\widetilde{\FF}_R\rangle.
\end{align}
It is easy to check that
\begin{align}\label{vvRQ-FF3}
\langle\vv_R\cdot\nabla\widetilde{Q},\widetilde{\FF}_1\rangle
=-\frac{J}{2}\frac{\ud}{\ud t}\big\|\vv_R\cdot\nabla\widetilde{Q}\big\|^2_{L^2}.
\end{align}
By using integration by parts, we deduce from Lemma \ref{lem:energy} that
\begin{align}\label{vvRQ-FF4}
\langle\vv_R\cdot\nabla\widetilde{Q},\widetilde{\FF}_2\rangle
=&-\ve^3J\frac{\ud}{\ud t}\big\langle\vv_R\cdot\nabla\widetilde{Q},\vv_R\cdot\nabla Q_R\big\rangle
+\ve^3J\big\langle\vv_R\cdot\nabla(\vv_R\cdot\nabla\widetilde{Q}), \dot{Q}_R\big\rangle\nonumber\\
&+\underbrace{\ve^3J\Big\langle(\partial_t+\tilde{\vv}\cdot\nabla)(\vv_R\cdot\nabla\widetilde{Q}),\vv_R\cdot\nabla Q_R\Big\rangle}_{\SSS_1}
\nonumber\\
&+\ve^6J\Big\langle\vv_R\cdot\nabla(\vv_R\cdot\nabla\widetilde{Q}),\vv_R\cdot\nabla Q_R\Big\rangle\nonumber\\
\leq&
-\ve^3J\frac{\ud}{\ud t}\big\langle\vv_R\cdot\nabla\widetilde{Q},\vv_R\cdot\nabla Q_R\big\rangle
+\SSS_1\nonumber\\
&+C\ve^3\|\vv_R\|_{H^2}\|\vv_R\|_{H^1}\|\dot{Q}_R\|_{L^2}
+C\ve^6\|\vv_R\|^2_{H^2}\|\vv_R\|_{H^1}\|\nabla Q_R\|_{L^2}\nonumber\\
\leq&
-J\ve^3\frac{\ud}{\ud t}\big\langle\vv_R\cdot\nabla\widetilde{Q},\vv_R\cdot\nabla Q_R\big\rangle+\SSS_1\nonumber\\
&+C(\ve\Ef^{\frac32}+\ve^2\Ef^2+\ve\Ef\Ff^{\frac12}+\ve^2\Ef^{\frac32}\Ff^{\frac12}).
\end{align}
Then from (\ref{FFdotQ-R})-(\ref{vvRQ-FF4}) and Lemma \ref{FFR-goodterm} we obtain
\begin{align}\label{vv-tildeQR-FFR}
\langle\widetilde{\FF}_1,\dot{Q}_R\rangle
\leq&
-J\frac{\ud}{\ud t}\langle\vv_R\cdot\nabla\widetilde{Q},\dot{Q}_R\rangle
-\frac{J}{2}\frac{\ud}{\ud t}\big\|\vv_R\cdot\nabla\widetilde{Q}\big\|^2_{L^2}-\ve^3J\frac{\ud}{\ud t}\big\langle\vv_R\cdot\nabla\widetilde{Q},\vv_R\cdot\nabla Q_R\big\rangle\nonumber\\
&
+\SSS_1+C(1+\Ef+\ve^2\Ef^2+\Ef^{\frac12}\Ff^{\frac12})+C\ve^2\Ef\Ff.
\end{align}

Now we derive the estimate of $\langle\widetilde{\FF}_2,\dot{Q}_R\rangle$. Similarly we obtain from  (\ref{remainder-Q-R}) that
\begin{align}
\langle\widetilde{\FF}_2,\dot{Q}_R\rangle=&-\ve^3J\frac{\ud}{\ud t}\langle\vv_R\cdot\nabla Q_R,\dot{Q}_R\rangle
+\ve^3J\langle\vv_R\cdot\nabla Q_R,\ddot{Q}_R\rangle\nonumber\\
&-\ve^3J\langle\vv_R\cdot\nabla\dot{Q}_R,\dot{Q}_R\rangle
+\ve^6J\big\langle\vv_R\cdot\nabla Q_R,\vv_R\cdot\nabla\dot{Q}_R\big\rangle\nonumber\\
=&
-\ve^3J\frac{\ud}{\ud t}\langle\vv_R\cdot\nabla Q_R,\dot{Q}_R\rangle
\underbrace{-\ve^3\mu_1\langle\vv_R\cdot\nabla Q_R,\dot{Q}_R\rangle}_{\mathcal{A}_1}\nonumber\\
&\underbrace{-\ve^3\Big\langle\vv_R\cdot\nabla Q_R,\frac{1}{\ve}\MH^{\ve}_{\nn}(Q_R)\Big\rangle}_{\mathcal{A}_2}
+\underbrace{\ve^3\Big\langle\vv_R\cdot\nabla Q_R,
 -\frac{\mu_2}{2}\DD_R+\mu_1[\BOm_R,Q_0]\Big\rangle}_{\mathcal{A}_3}\nonumber\\
&+\ve^3\langle\vv_R\cdot\nabla Q_R,\FF_R+\widetilde{\FF}_R\rangle
+\underbrace{\ve^6J\big\langle\vv_R\cdot\nabla Q_R,\vv_R\cdot\nabla\dot{Q}_R\big\rangle}_{\WW_1}.\nonumber
\end{align}
By Lemma \ref{lem:energy}, we have
\begin{align*}
\mathcal{A}_1
\leq& C\ve^3\|\vv_R\|_{H^2}\|\nabla Q_R\|_{L^2}\|\dot{Q}_R\|_{L^2}
\leq C\ve\Ef^{\frac32},\\
\mathcal{A}_2
=&-\ve^2\Big\langle\vv_R\cdot\nabla Q_R,\MH_{\nn}(Q_R)+\ve\ML(Q_R)\Big\rangle\\
\leq&C\ve^2\|\vv_R\|_{H^2}\|\nabla Q_R\|_{L^2}(\|Q_R\|_{L^2}+\|\ve Q_R\|_{H^2})
\leq C(\ve^2\Ef^{\frac32}+\ve\Ef\Ff^{\frac12}),\\
\mathcal{A}_3
\leq&C\ve^3\|\vv_R\|_{H^2}\|\nabla Q_R\|_{L^2}\|\nabla\vv_R\|_{L^2}
\leq C\ve\Ef\Ff^{\frac12}.
\end{align*}
By Lemma \ref{FFR-goodterm}, we get
\begin{align*}
\ve^3\langle\vv_R\cdot\nabla Q_R,\FF_R\rangle
\leq &\ve^3\|\vv_R\|_{H^2}\|\nabla Q_R\|_{L^2}\|\FF_R\|_{L^2}
\leq \ve\Ef\|\FF_R\|_{L^2}\\
\leq &C(1+\Ef+\ve^2\Ef^2+\ve^8\Ef^5)+C\ve^2\Ef\Ff.
\end{align*}
Using integration by parts, it follows immediately by Lemma \ref{lem:energy} that
\begin{align*}
&\ve^3\langle\vv_R\cdot\nabla Q_R,\widetilde{\FF}_R\rangle\\
&=-\ve^6\frac{J}{2}\frac{\ud}{\ud t}\|\vv_R\cdot\nabla Q_R\|^2_{L^2}
-\SSS_1-\WW_1
-\ve^9J\Big\langle\vv_R\cdot\nabla Q_R,\vv_R\cdot\nabla(\vv_R\cdot\nabla Q_R)\Big\rangle\\
&=-\ve^6\frac{J}{2}\frac{\ud}{\ud t}\|\vv_R\cdot\nabla Q_R\|^2_{L^2}
-\SSS_1-\WW_1.
\end{align*}
Thus we find
\begin{align}\label{ve3-vQR-FFR}
\langle\widetilde{\FF}_2,\dot{Q}_R\rangle
\leq&-\ve^3J\frac{\ud}{\ud t}\langle\vv_R\cdot\nabla Q_R,\dot{Q}_R\rangle
-\ve^6\frac{J}{2}\frac{\ud}{\ud t}\|\vv_R\cdot\nabla Q_R\|^2_{L^2}\nonumber\\
&-\SSS_1
+C(1+\Ef+\ve^2\Ef^2+\ve^8\Ef^5)+C\ve^2\Ef\Ff.
\end{align}
Recalling $Q^{\ve}=\widetilde{Q}+\ve^3Q_R$, we have
\begin{align}\label{energ:identity}
&\frac{\ud}{\ud t}\|\vv_R\cdot\nabla\widetilde{Q}\big\|^2_{L^2}
+2\ve^3\frac{\ud}{\ud t}\big\langle\vv_R\cdot\nabla\widetilde{Q},\vv_R\cdot\nabla Q_R\big\rangle
+\ve^6\frac{\ud}{\ud t}\|\vv_R\cdot\nabla Q_R\|^2_{L^2}\nonumber\\
&=\frac{\ud}{\ud t}\|\vv_R\cdot\nabla(\widetilde{Q}+\ve^3Q_R)\big\|^2_{L^2}
=\frac{\ud}{\ud t}\|\vv_R\cdot\nabla Q^{\ve}\big\|^2_{L^2}.
\end{align}

Therefore, summarizing (\ref{vv-tildeQR-FFR}) and (\ref{ve3-vQR-FFR}), and using (\ref{energ:identity}), we obtain
\begin{align*}
\langle\widetilde{\FF}_R,\dot{Q}_R\rangle
\leq&
-\frac{J}{2}\frac{\ud}{\ud t}\big\|\vv_R\cdot\nabla Q^{\ve}\big\|^2_{L^2}
-J\frac{\ud}{\ud t}\langle\vv_R\cdot\nabla Q^{\ve},\dot{Q}_R\rangle\\
&
+C(1+\Ef+\ve^2\Ef^2+\ve^8\Ef^5)+(\delta+C\ve^2\Ef)\Ff.
\end{align*}
\end{proof}

\subsection{The uniform energy estimate}
In this subsection, we derive the uniform energy estimate for the remainder system. 
\begin{proposition}\label{prop:energy}
Let $(\vv_R,Q_R)$ be a smooth solution of the remainder
system (\ref{remainder-Q-R})-(\ref{remainder-div}) on $[0,T]$, then for any $t\in [0,T]$,  it holds that
\begin{align}
\frac{d }{d t}\widetilde{\Ef}(t)+\Ff(t)\le C(1+\Ef+\ve^2\Ef^2+\ve^8\Ef^5)+C(\ve+\ve^2\Ef^{\frac12}+\ve^2\Ef)\Ff,\non
\end{align}
where the energy functional $\widetilde{\Ef}(t)$ is defined by
\begin{align}\label{compli-energ-func}
\widetilde{\Ef}(t)\eqdefa\widetilde{\Ef}_0(t)+\widetilde{\Ef}_1(t)+\widetilde{\Ef}_2(t),
\end{align}
and $\widetilde{\Ef}_i(t)(i=0,1,2)$ are given as follows:
\begin{align*}
\left\{
\begin{aligned}
\widetilde{\Ef}_0(t)=&\frac12\int_{\BR}\Big(|\vv_R|^2+J|\dot{Q}_R+\vv_R\cdot\nabla Q^{\ve}+Q_R|^2+(\mu_1-J)|Q_R|^2\\
&\qquad+\frac{1}{\ve}\MH^{\ve}_{\nn}(Q_R):Q_R
\Big)\ud\xx
+J\Big\langle\MH^{-1}_{\nn}\CG(Q_R^{\top}),\dot{Q}_R+\vv_R\cdot\nabla{Q}^\ve\Big\rangle,\\
\widetilde{\Ef}_1(t)=&\frac{\ve^2}{2}\int_{\BR}\Big(|\partial_i\vv_R|^2+J|\partial_i\dot{Q}_R+\partial_i(\vv_R\cdot\nabla Q^{\ve})|^2+\frac{1}{\ve}\MH^{\ve}_{\nn}(\partial_iQ_R):\partial_iQ_R\Big)\ud\xx,\\
\widetilde{\Ef}_2(t)=&\frac{\ve^4}{2}\int_{\BR}\Big(|\Delta\vv_R|^2+J|\Delta\dot{Q}_R+\Delta(\vv_R\cdot\nabla Q^{\ve})|^2
+\frac{1}{\ve}\MH^{\ve}_{\nn}(\Delta Q_R):\Delta Q_R\Big)\ud\xx.
\end{aligned}
\right.
\end{align*}
Here $\CG(Q)\eqdefa2bs\dot{\overline{\nn\nn}}\cdot Q-4cs^2Q:\dot{\overline{\nn\nn}}(\nn\nn-\frac13\II)$ and $Q^{\ve}=\widetilde{Q}+\ve^3Q_R$.
\end{proposition}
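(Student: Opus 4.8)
The plan is to run a weighted energy estimate at the three derivative levels $\ve^0$, $\ve^2\partial_i$, $\ve^4\Delta$ that make up $\Ef$, choosing the test functions so that the cancellations encoded in the modified energy $\widetilde{\Ef}$ emerge. At the zeroth level I would pair the $Q_R$-equation (\ref{remainder-Q-R}) with $\dot{Q}_R+\vv_R\cdot\nabla Q^\ve+Q_R$ and the velocity equation (\ref{remainder-v-R}) with $\vv_R$, then add. Using (\ref{ddQ-Q}) and $\nabla\cdot\vv_R=0$, the inertial term $J\ddot{Q}_R$ paired with $\dot{Q}_R$ gives $\frac J2\frac{\ud}{\ud t}\|\dot{Q}_R\|_{L^2}^2$; paired with $Q_R$ and with $\vv_R\cdot\nabla Q^\ve$ it yields, after integration by parts in $t$, the cross terms inside $\widetilde{\Ef}_0$; and the damping $\mu_1\dot{Q}_R$ against $\dot{Q}_R$ contributes the coercive $\mu_1\|\dot{Q}_R\|_{L^2}^2$. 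The singular term $-\frac1\ve\langle\MH^\ve_\nn(Q_R),\dot{Q}_R\rangle=-\frac1\ve\langle\MH^\ve_\nn(Q_R),(\partial_t+\tilde{\vv}\cdot\nabla)Q_R\rangle$ is integrated by parts in time to produce $-\frac1{2\ve}\frac{\ud}{\ud t}\langle\MH^\ve_\nn(Q_R),Q_R\rangle$ plus the coefficient-derivative terms $\frac1\ve\langle\dot{\overline{\nn\nn}}\cdot Q_R,Q_R\rangle$ and $\frac1\ve\langle(Q_R:\dot{\overline{\nn\nn}})\nn\nn,Q_R\rangle$ of (\ref{sigul-terms}); these I control by Lemma \ref{lem:singular}, whose total-derivative outputs, carrying the coefficients $2bs$ and $-4cs^2$, assemble into exactly the correction $-J\frac{\ud}{\ud t}\langle\MH^{-1}_\nn\CG(Q_R^\top),\dot{Q}_R+\vv_R\cdot\nabla Q^\ve\rangle$ appearing in $\widetilde{\Ef}_0$.

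Next comes the Leslie-type dissipation together with the coupling. The terms $-\frac{\mu_2}2\DD_R+\mu_1[\BOm_R,Q_0]$ in (\ref{remainder-Q-R}) paired with $\dot{Q}_R$, the matching stress $\nabla\cdot\big(\frac{\mu_2}2(\dot{Q}_R-[\BOm_R,Q_0])+\mu_1[Q_0,\dot{Q}_R-[\BOm_R,Q_0]]\big)$ in (\ref{remainder-v-R}) paired with $\vv_R$, and the $\beta_i$-stress terms combine, after integration by parts and use of $Q_0=s(\nn\nn-\frac13\II)$, (\ref{Q-Parodi}) and $\langle[\DD_R,Q_0],\DD_R\rangle=0$ — exactly as in the computation of $I_1+\cdots+I_4$ in the proof of Proposition \ref{prop:Hilbert} — into $-\tilde\beta_1\|\nn\nn:\DD_R\|_{L^2}^2-\tilde\beta_2\|\DD_R\|_{L^2}^2-\tilde\beta_3\|\nn\cdot\DD_R\|_{L^2}^2-\mu_1\|\dot{Q}_R-[\BOm_R,Q_0]+\frac{\mu_2}{2\mu_1}\DD_R\|_{L^2}^2$; by Lemma \ref{diss:relation}, condition (\ref{viscosity-cond}), and the identity $\|\DD_R\|_{L^2}^2=\frac12\|\nabla\vv_R\|_{L^2}^2$ for divergence-free fields, this absorbs $\Ff$ at this level up to a small slack $\delta$. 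The forcing terms $\FF_R$, $\GG_R$, $\GG'_R$ are then disposed of via Lemmas \ref{FFR-goodterm} and \ref{GG-lem} and Young's inequality. The remaining obstruction $\widetilde{\FF}_R$, whose leading part is $-J(\partial_t+\tilde{\vv}\cdot\nabla)(\vv_R\cdot\nabla Q^\ve)$ and which carries second spatial derivatives of $Q_R$, is handled by Lemmas \ref{lem:FR-1} and \ref{lem:FR-2}: integrating by parts in $t$ and substituting (\ref{remainder-Q-R}) for the resulting $\ddot{Q}_R$, and using the orthogonality $\vv_R\cdot\nabla Q_0\in\mathrm{Ker}\MH_\nn$ to kill the would-be singular pairing against $\frac1\ve\MH^\ve_\nn(Q_R)$, produces the total derivatives $-\frac J2\frac{\ud}{\ud t}\|\vv_R\cdot\nabla Q^\ve\|_{L^2}^2-J\frac{\ud}{\ud t}\langle\vv_R\cdot\nabla Q^\ve,\dot{Q}_R\rangle$, which complete the square $\frac J2\frac{\ud}{\ud t}\|\dot{Q}_R+\vv_R\cdot\nabla Q^\ve+Q_R\|_{L^2}^2$ inside $\widetilde{\Ef}_0$.

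At the two higher levels $m=1,2$ I would apply $\ve\partial_i$, respectively $\ve^2\Delta$, to both equations and repeat, pairing with $\ve\partial_i\dot{Q}_R+\ve\partial_i(\vv_R\cdot\nabla Q^\ve)$ and $\ve\partial_i\vv_R$, respectively the $\Delta$-analogues. The structural cancellations are the same, but the analogues of (\ref{sigul-terms}) now carry two additional powers of $\ve$ and so, by (\ref{m-nn-mQR-1})--(\ref{m-nn-mQR-2}), are bounded by $C\Ef$ outright; this is precisely why $\widetilde{\Ef}_1$ and $\widetilde{\Ef}_2$ contain neither a $(\mu_1-J)|Q_R|^2$-type term nor a $\CG$-correction. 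Commutators $[\ve\partial_i,\tilde{\vv}\cdot\nabla]$, $[\ve\partial_i,\MH^\ve_\nn]$ and the analogous ones for $\Delta$ contribute only lower-order quantities, absorbable into $C(1+\Ef)$ and $(\delta+C\ve^2\Ef)\Ff$ by (\ref{simple-ineq}) and Lemma \ref{lem:energy}. Summing the three levels and gathering every full time derivative into $\frac{\ud}{\ud t}\widetilde{\Ef}$ will then yield the asserted inequality, with the powers $\ve^2\Ef^2$ and $\ve^8\Ef^5$ arising exactly from the nonlinear bounds of Lemmas \ref{FFR-goodterm}, \ref{GG-lem} and \ref{lem:FR-2}.

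The hard part will be the interplay between the two genuinely dangerous terms. In the inertial setting there is no dissipation term — unlike the non-inertial case treated in \cite{WZZ3} — to absorb either the $O(\ve^{-1})$ quantities produced by differentiating the coefficients of $\MH^\ve_\nn$ in time, or the second-spatial-derivative term $\widetilde{\FF}_R$. Both must therefore be converted into exact time derivatives plus harmless lower-order remainders, which is feasible only because $\dot{\overline{\nn\nn}}\cdot Q_R^\top$ is mapped into $(\mathrm{Ker}\MH_\nn)^\perp$ through the bounded inverse $\MH^{-1}_\nn$ and because $\vv_R\cdot\nabla Q_0\in\mathrm{Ker}\MH_\nn$. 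Verifying that the total derivatives thus generated are exactly those stored inside $\widetilde{\Ef}_0$, and that after including the $\CG$-correction and the shift $\dot{Q}_R\mapsto\dot{Q}_R+\vv_R\cdot\nabla Q^\ve+Q_R$ the leftover quadratic form remains positive — which is where the hypothesis $\mu_1\gg J$ is used — is the delicate bookkeeping at the core of the argument.
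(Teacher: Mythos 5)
Your proposal is correct and follows essentially the same route as the paper: a three-level weighted estimate, with the singular terms converted to total time derivatives via Lemma \ref{lem:singular} (yielding the $\CG$-correction at the zeroth level only), $\widetilde{\FF}_R$ absorbed through Lemmas \ref{lem:FR-1}--\ref{lem:FR-2}, and the Leslie-type dissipation closed via Parodi's relation and Lemma \ref{diss:relation}. The only cosmetic difference is that you test directly against $\dot{Q}_R+\vv_R\cdot\nabla Q^{\ve}+Q_R$, whereas the paper tests against $Q_R$ and $\dot{Q}_R$ separately and lets the cross terms emerge from the treatment of $\widetilde{\FF}_R$; the bookkeeping is equivalent.
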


\begin{proof}
{\it Step 1. $L^2$-estimate}. On the one hand, multiplying the equation (\ref{remainder-Q-R}) by $Q_R$, taking the trace and integrating over the space $\BR$ and using the fact that $\MH^{\ve}_{\nn}(Q_R):Q_R\geq 0$ yields
\begin{align}\label{QR-LL2}
&J\langle\ddot{Q}_R,Q_R\rangle+\mu_1\langle\dot{Q}_R,Q_R\rangle\nonumber\\
&=-\Big\langle\frac{1}{\ve}\MH^{\ve}_{\nn}(Q_R),Q_R\Big\rangle
-\frac{\mu_2}{2}\langle\DD_R,Q_R\rangle+\mu_1\langle[\BOm_R,Q_0],Q_R\rangle
+\langle\FF_R+\widetilde{\FF}_R,Q_R\rangle\nonumber\\
&\leq C\|\nabla\vv_R\|_{L^2}\|Q_R\|_{L^2}+\Ef^{\frac12}\|\FF_R\|_{L^2}
+\langle\widetilde{\FF}_R,Q_R\rangle.
\end{align}
Considering the previous equality (\ref{ddQ-Q}),
then (\ref{QR-LL2}) can be reduced to
 \begin{align}\label{QR-L2}
&\frac12\frac{\ud}{\ud t}\int_{\BR}\Big(2J\dot{Q}_R:Q_R+\mu_1|Q_R|^2\Big)\ud\xx\nonumber\\
&\leq C(\Ef+\Ef^{\frac12}\Ff^{\frac12})+\Ef^{\frac12}\|\FF_R\|_{L^2}+\langle\widetilde{\FF}_R,Q_R\rangle.
\end{align}

On the other hand, multiplying the equation (\ref{remainder-Q-R}) by $\dot{Q}_R$ and the equation (\ref{remainder-v-R}) by $\vv_R$, integrating by parts over the space $\BR$, we hence obtain
\begin{align}\label{ddotQR-L2}
&\langle\dot{\vv}_R,\vv_R\rangle
+J\langle\ddot{Q}_R,\dot{Q}_R\rangle\nonumber\\
&=
\underbrace{-\Big\langle\beta_1Q_0(Q_0:\DD_R)+\beta_4\DD_R+\beta_5\DD_R\cdot Q_0+\beta_6Q_0\cdot\DD_R,\nabla\vv_R\Big\rangle}_{\mathcal{I}_1}\nonumber\\
&\quad
\underbrace{-\Big\langle\beta_7(\DD_R\cdot Q^2_0+Q^2_0\cdot\DD_R),\nabla\vv_R\Big\rangle}_{\mathcal{I}_2}
\underbrace{-\frac{\mu_2}{2}\big\langle\dot{Q}_R-[\BOm_R,Q_0],\nabla\vv_R\big\rangle}_{\mathcal{I}_3}\nonumber\\
&\quad
\underbrace{-\mu_1\Big\langle\big[Q_0,(\dot{Q}_R-[\BOm_R,Q_0])\big],\nabla\vv_R\Big\rangle}_{\mathcal{I}_4}
\underbrace{-\frac{\mu_2}{2}\langle\DD_R,\dot{Q}_R\rangle}_{\mathcal{I}_5}\nonumber\\
&\quad
\underbrace{-\mu_1\big\langle\dot{Q}_R-[\BOm_R,Q_0],\dot{Q}_R\big\rangle}_{\mathcal{I}_6}
\underbrace{-\Big\langle\frac{1}{\ve}\MH^{\ve}_{\nn}(Q_R),\dot{Q}_R\Big\rangle}_{\mathcal{I}_7}\nonumber\\
&\quad
+\langle\nabla\cdot\GG_R+\GG'_R,\vv_R\rangle+\langle\FF_R+\widetilde{\FF}_R,\dot{Q}_R\rangle.
\end{align}
Now we estimate (\ref{ddotQR-L2}) term by term as follows. We will use frequently a simple fact that $\langle A, B\rangle=0$ if the tensor $A$ is symmetric but $B$ skew symmetric.
Remembering the relation $\beta_6-\beta_5=\mu_2$, and noting $Q_0=s(\nn\nn-\frac13\II)$, it follows that
\begin{align}\label{I1I2}
\mathcal{I}_1+\mathcal{I}_2
=&-\Big\langle\beta_1Q_0(Q_0:\DD_R)+\beta_4\DD_R+\frac{\beta_5+\beta_6}{2}(Q_0\cdot\DD_R+\DD_R\cdot Q_0),\DD_R\Big\rangle\nonumber\\
&-\Big\langle\beta_7(\DD_R\cdot Q^2_0+Q^2_0\cdot\DD_R),\DD_R\Big\rangle\nonumber\\
&+\Big\langle\Big(\frac{\beta_5+\beta_6}{2}-\beta_5\Big)\DD_R\cdot Q_0
+\Big(\frac{\beta_5+\beta_6}{2}-\beta_6\Big)Q_0\cdot\DD_R, \DD_R+\BOm_R\Big\rangle\nonumber\\
=&-\beta_1s^2\|\nn\nn:\DD_R\|^2_{L^2}-\Big(\beta_4-\frac{s(\beta_5+\beta_6)}{3}
+\frac29\beta_7s^2\Big)\|\DD_R\|^2_{L^2}\nonumber\\
&-\Big(s(\beta_5+\beta_6)+\frac23\beta_7s^2\Big)\|\nn\cdot\DD_R\|^2_{L^2}
+\underbrace{\frac{\mu_2}{2}\Big\langle[\DD_R,Q_0],\BOm_R\Big\rangle}_{\mathcal{I}'_1}.
\end{align}
Due to the symmetry of the commutator $[\BOm_R,Q_0]$, it follows that
\begin{align*}
\mathcal{I}'_{1}+\mathcal{I}_{3}+\mathcal{I}_{5}
&=\frac{\mu_2}{2}\langle[\DD_R,Q_0],\BOm_R\rangle
-\frac{\mu_2}{2}\big\langle\dot{Q}_R-[\BOm_R,Q_0],\DD_R\big\rangle
-\frac{\mu_2}{2}\langle\DD_R,\dot{Q}_R\rangle\\
&=-\mu_2\big\langle\dot{Q}_R-[\BOm_R,Q_0],\DD_R\big\rangle.
\end{align*}
Simultaneously, we have
\begin{align*}
\mathcal{I}_4+\mathcal{I}_6
=&-\mu_1\Big\langle\big[Q_0,(\dot{Q}_R-[\BOm_R,Q_0])\big],\BOm_R\Big\rangle
-\mu_1\big\langle\dot{Q}_R-[\BOm_R,Q_0],\dot{Q}_R\big\rangle\\
=&-\mu_1\Big\langle[Q_0,\BOm_R],\dot{Q}_R-[\BOm_R,Q_0]\Big\rangle
-\mu_1\big\langle\dot{Q}_R-[\BOm_R,Q_0],\dot{Q}_R\big\rangle\\
=&-\mu_1\|\dot{Q}_R-[\BOm_R,Q_0]\|^2_{L^2}.
\end{align*}
It may be observed that
\begin{align*}
\mathcal{I}'_{1}+\mathcal{I}_{3}+\mathcal{I}_{4}+\mathcal{I}_5+\mathcal{I}_6
&=-\mu_1\Big\|\dot{Q}_R-[\BOm_R,Q_0]+\frac{\mu_2}{2\mu_1}\DD_R\Big\|^2_{L^2}
+\frac{\mu^2_2}{4\mu_1}\|\DD_R\|^2_{L^2},
\end{align*}
which combines with (\ref{I1I2}) and the dissipation relation (\ref{diss:ineq}) yields
\begin{align}\label{I1-I6}
&\mathcal{I}_1+\mathcal{I}_2+\mathcal{I}_3+\mathcal{I}_4+\mathcal{I}_5+\mathcal{I}_6\nonumber\\
&=-\tilde{\beta}_1\|\nn\nn:\DD_R\|^2_{L^2}
-\tilde{\beta}_2\|\DD_R\|^2_{L^2}-\tilde{\beta}_3\|\nn\cdot\DD_R\|^2_{L^2}-4\delta\|\DD_R\|^2_{L^2}
\nonumber\\
&\leq-4\delta\|\DD_R\|^2_{L^2},
\end{align}
where $\delta>0$ is small enough, such that $\tilde{\beta}_i(i=1,2,3)$ given by (\ref{tilde-beta}) satisfy (\ref{diss:coeff}).

For the term $\mathcal{I}_7$, using $Q_R:\II=\text{Tr} Q_R=0$,  we can write
\begin{align*}
\frac{\ud}{\ud t}\langle\frac{1}{\ve}\MH^{\ve}_{\nn}(Q_R),Q_R\rangle
=&\frac{2}{\ve}\langle\MH^{\ve}_{\nn}(Q_R),\dot{Q}_R\rangle
+\frac{1}{\ve}\Big\langle bs\big(\dot{\overline{\nn\nn}}\cdot Q_R+Q_R\cdot\dot{\overline{\nn\nn}}\big)\\
&-2cs^2\big(Q_R:\dot{\overline{\nn\nn}}(\nn\nn)+(Q_R:\nn\nn)\dot{\overline{\nn\nn}}\big),Q_R\Big\rangle\\
=&-2\mathcal{I}_7
+\frac{2}{\ve}\Big\langle bs\dot{\overline{\nn\nn}}\cdot Q_R-2cs^2Q_R:\dot{\overline{\nn\nn}}(\nn\nn),Q_R\Big\rangle,
\end{align*}
which implies from Lemma \ref{lem:singular} that
\begin{align}\label{key-estimatehqr}
\mathcal{I}_7
&\leq-\frac{1}{2}\frac{\ud}{\ud t}\langle\frac{1}{\ve}\MH^{\ve}_{\nn}(Q_R),Q_R\rangle
-J\frac{\ud}{\ud t}\Big\langle\MH^{-1}_{\nn}\CG(Q_R^{\top}),\dot{Q}_R+\vv_R\cdot\nabla Q^{\ve}\Big\rangle\nonumber\\
&\quad+C(1+\Ef+\ve^2\Ef^2)+(\delta+C\ve^2\Ef)\Ff.
\end{align}

Hence, summarizing (\ref{ddotQR-L2}) and the estimates (\ref{I1-I6})-(\ref{key-estimatehqr}), we get
\begin{align}\label{dQR-vR-L2}
&\frac12\frac{\ud}{\ud t}\int_{\BR}\Big(|\vv_R|^2+J|\dot{Q}_R|^2
+\frac{1}{\ve}\MH^{\ve}_{\nn}(Q_R):Q_R
\Big)\ud\xx\nonumber\\
&\quad+J\frac{\ud}{\ud t}\Big\langle\MH^{-1}_{\nn}\CG(Q_R^{\top}),\dot{Q}_R+\vv_R\cdot\nabla Q^{\ve}\Big\rangle
+4\delta\|\nabla\vv_R\|^2_{L^2}\nonumber\\
&\leq
C\Big(\|\GG'_R\|_{L^2}\Ef^{\frac12}+\|\FF_R\|_{L^2}\Ef^{\frac12}+\|\GG_R\|_{L^2}\Ff^{\frac12}\Big)
+\langle\widetilde{\FF}_R,\dot{Q}_R\rangle\nonumber\\
&\quad+C(1+\Ef+\ve^2\Ef^2)+(\delta+C\ve^2\Ef)\Ff.
\end{align}
Then, adding (\ref{QR-L2}) to (\ref{dQR-vR-L2}), and using Lemma \ref{FFR-goodterm}-\ref{GG-lem} and Lemma \ref{lem:FR-1}-\ref{lem:FR-2}, we obtain
\begin{align}\label{L2-estimate}
&\frac{\ud}{\ud t}\widetilde{\Ef}_0(t)+4\delta\|\nabla\vv_R\|^2_{L^2}\nonumber\\
&\leq
C(1+\Ef+\ve^2\Ef^2+\ve^8\Ef^5)+\delta\Ff+C(\ve+\ve^2\Ef^{\frac12}+\ve^2\Ef)\Ff.
\end{align}

{\it Step 2. $H^1$-estimate}. We act the derivative $\partial_i$ on the equation (\ref{remainder-Q-R}) and take $L^2$-inner product with $\partial_i\dot{Q}_R$. Again by acting $\partial_i$ on the equation (\ref{remainder-v-R}) and taking $L^2$-inner product with $\partial_i\vv_R$, we then have
\begin{align*}
&\ve^2\Big\langle\partial_t(\partial_i\vv_R),\partial_i\vv_R\Big\rangle
+\ve^2J\Big\langle\partial_t(\partial_i\dot{Q}_R),\partial_i\dot{Q}_R\Big\rangle\\
&=
\underbrace{-\ve^2\bigg\langle\partial_i\Big(\beta_1Q_0(Q_0:\DD_R)+\beta_4\DD_R
+\beta_5\DD_R\cdot Q_0
+\beta_6Q_0\cdot\DD_R\Big),\nabla\partial_i\vv_R\bigg\rangle}_{\mathcal{J}_{1}}\\
&\quad
\underbrace{-\ve^2\beta_7\Big\langle\partial_i(\DD_R\cdot Q^2_0+Q^2_0\cdot\DD_R),\nabla\partial_i\vv_R\Big\rangle}_{\mathcal{J}_{2}}\\
&\quad
\underbrace{-\ve^2\frac{\mu_2}{2}\Big\langle\partial_i(\dot{Q}_R-[\BOm_R,Q_0]),\nabla\partial_i\vv_R\Big\rangle}_{\mathcal{J}_3}
\underbrace{-\ve^2\mu_1\Big\langle\partial_i\big[Q_0,(\dot{Q}_R-[\BOm_R,Q_0])\big],\nabla\partial_i\vv_R\Big\rangle}_{\mathcal{J}_4}\\
&\quad
\underbrace{-\ve^2\langle\partial_i\tilde{\vv}\cdot\nabla\vv_R, \partial_i\vv_R\rangle
-\ve^2\langle\partial_i\GG_R,\nabla\partial_i\vv_R\rangle+\ve^2\langle\partial_i\GG'_R,\partial_i\vv_R\rangle}_{\mathcal{J}_5}\\
&\quad
\underbrace{-\ve^2\frac{\mu_2}{2}\langle\partial_i\DD_R,\partial_i\dot{Q}_R\rangle}_{\mathcal{J}_6}
\underbrace{-\ve^2\mu_1\langle\partial_i\dot{Q}_R-\partial_i[\BOm_R,Q_0],\partial_i\dot{Q}_R\rangle}_{\mathcal{J}_7}
\underbrace{-\ve^2\Big\langle\frac{1}{\ve}\partial_i\MH^{\ve}_{\nn}(Q_R),\partial_i\dot{Q}_R\Big\rangle}_{\mathcal{J}_8}\\
&\quad
\underbrace{-\ve^2\langle\partial_i\tilde{\vv}\cdot\nabla\dot{Q}_R,\partial_i\dot{Q}_R\rangle\rangle}_{\mathcal{J}_9}
+\ve^2\langle\partial_i\FF_R+\partial_i\widetilde{\FF}_R,\partial_i\dot{Q}_R\rangle.
\end{align*}
Via employing the analogous method in (\ref{I1I2}), we derive that
\begin{align*}
\mathcal{J}_{1}+\mathcal{J}_2
\leq&-\ve^2\Big\langle\beta_1Q_0(Q_0:\partial_i\DD_R)+\beta_4\partial_i\DD_R
+\beta_5\partial_i\DD_R\cdot Q_0+\beta_6Q_0\cdot\partial_i\DD_R,\nabla\partial_i\vv_R\Big\rangle\\
&
-\ve^2\Big\langle\beta_7(\partial_i\DD_R\cdot Q^2_0+Q^2_0\cdot\partial_i\DD_R),\nabla\partial_i\vv_R\Big\rangle
+C\|\ve\nabla\vv_R\|_{L^2}\|\ve\nabla\partial_i\vv_R\|_{L^2}\\
\leq&-\ve^2\Big\langle\beta_1Q_0(Q_0:\partial_i\DD_R)+\beta_4\partial_i\DD_R
+\frac{\beta_5+\beta_6}{2}(Q_0\cdot\partial_i\DD_R+\partial_i\DD_R\cdot Q_0),\partial_i\DD_R\Big\rangle\\
&
-\ve^2\Big\langle\beta_7(\partial_i\DD_R\cdot Q^2_0+Q^2_0\cdot\partial_i\DD_R),\partial_i\DD_R\Big\rangle
+\underbrace{\ve^2\frac{\mu_2}{2}\langle[\partial_i\DD_R,Q_0],\nabla\partial_i\vv_R\rangle}_{\mathcal{J}'_{1}}
+C\Ef^{\frac12}\Ff^{\frac12}.
\end{align*}
Direct calculation enables us to get
\begin{align*}
\mathcal{J}'_{1}+\mathcal{J}_3+\mathcal{J}_6
&\leq
\ve^2\frac{\mu_2}{2}\langle[\partial_i\DD_R,Q_0],\partial_i\BOm_R\rangle
-\ve^2\mu_2\langle\partial_i\DD_R,\partial_i\dot{Q}_R\rangle\\
&\quad+\ve^2\frac{\mu_2}{2}\langle[\partial_i\BOm_R,Q_0],\partial_i\DD_R\rangle
+C\|\ve\nabla\vv_R\|_{L^2}\|\ve\nabla\partial_i\vv_R\|_{L^2}\\
&\leq -\ve^2\mu_2\langle\partial_i\dot{Q}_R-[\partial_i\BOm_R,Q_0],\partial_i\DD_R\rangle
+C\Ef^{\frac12}\Ff^{\frac12}.
\end{align*}
For the estimates of $\mathcal{J}_4$ and $\mathcal{J}_7$, it is easily to deduce that
\begin{align*}
\mathcal{J}_4+\mathcal{J}_7\leq&
-\ve^2\mu_1\Big\langle\big[Q_0,(\partial_i\dot{Q}_R-[\partial_i\BOm_R,Q_0])\big],\nabla\partial_i\vv_R\Big\rangle\\
&-\ve^2\mu_1\langle\partial_i\dot{Q}_R-[\partial_i\BOm_R,Q_0],\partial_i\dot{Q}_R\rangle\\
&+C\Big(\big\|\ve(\dot{Q}_R-[\BOm_R,Q_0])\big\|_{L^2}
+\big\|\ve(\partial_i\dot{Q}_R-[\BOm_R,\partial_iQ_0])\big\|_{L^2}\Big)\|\ve\nabla\partial_i\vv_R\|_{L^2}\\
&+C\|\ve\nabla\vv_R\|_{L^2}\|\ve\partial_i\dot{Q}_R\|_{L^2}\\
\leq&
-\ve^2\mu_1\big\|\partial_i\dot{Q}_R-[\partial_i\BOm_R,Q_0]\big\|^2_{L^2}
+C(\Ef^{\frac12}\Ff^{\frac12}+\Ef).
\end{align*}
Noticing the following equality
\begin{align*}
&-\ve^2\mu_1\big\|\partial_i\dot{Q}_R-[\partial_i\BOm_R,Q_0]\big\|^2_{L^2}
-\ve^2\mu_2\langle\partial_i\dot{Q}_R-[\partial_i\BOm_R,Q_0],\partial_i\DD_R\rangle\\
&\quad=-\ve^2\mu_1\big\|\partial_i\dot{Q}_R-[\partial_i\BOm_R,Q_0]+\frac{\mu_2}{2\mu_1}\partial_i\DD_R\big\|^2_{L^2}
+\frac{\mu^2_2}{4\mu_1}\|\partial_i\DD_R\|^2_{L^2},
\end{align*}
and taking advantage of the dissipation relation (\ref{diss:ineq}), then we can infer that
\begin{align*}
&\mathcal{J}_1+\mathcal{J}_2+\mathcal{J}_{3}+\mathcal{J}_4+\mathcal{J}_6+\mathcal{J}_7\\
&\leq
-\ve^2\beta_1s^2\|\nn\nn:\partial_i\DD_R\|^2_{L^2}
-\ve^2\Big(\beta_4-\frac{s(\beta_5+\beta_6)}{3}\Big)\|\partial_i\DD_R\|^2_{L^2}\\
&\quad
-\ve^2s(\beta_5+\beta_6)\|\nn\cdot\partial_i\DD_R\|^2_{L^2}
-\ve^2\mu_2\langle\partial_i\dot{Q}_R-[\partial_i\BOm_R,Q_0],\partial_i\DD_R\rangle\\
&\quad-\ve^2\mu_1\big\|\partial_i\dot{Q}_R-[\partial_i\BOm_R,Q_0]\big\|^2_{L^2}
+C(\Ef^{\frac12}\Ff^{\frac12}+\Ef)\\
&\leq
-\ve^2\tilde{\beta}_1\|\nn\nn:\partial_i\DD_R\|^2_{L^2}-\ve^2\tilde{\beta}_2\|\partial_i\DD_R\|^2_{L^2}
-\ve^2\tilde{\beta}_3\|\nn\cdot\partial_i\DD_R\|^2_{L^2}\\
&\quad
-4\ve^2\delta\|\partial_i\DD_R\|^2_{L^2}
+C(\Ef^{\frac12}\Ff^{\frac12}+\Ef)\\
&\leq
-4\ve^2\delta\|\partial_i\DD_R\|^2_{L^2}+C\Ef+\delta\Ff,
\end{align*}
where $\delta>0$ is small enough such that the coefficients $\tilde{\beta}_i(i=1,2,3)$ given by (\ref{tilde-beta}) satisfy the relation (\ref{diss:coeff}).
In addition, the terms $\mathcal{J}_{5}$ and $\mathcal{J}_{9}$ can be controlled as
\begin{align*}
\mathcal{J}_{5}+\mathcal{J}_{9}
\leq&C\Big(\|\ve\nabla\vv_R\|^2_{L^2}+\|\ve\partial_i\GG_R\|_{L^2}\|\ve\nabla\partial_i\vv_R\|_{L^2}\\
&+\|\ve\partial_i\GG'_R\|_{L^2}\|\ve\partial_i\vv_R\|_{L^2}
+\|\ve\nabla\dot{Q}_R\|_{L^2}\|\ve\partial_i\dot{Q}_R\|_{L^2}\Big)\\
\leq&C\Ef+C(\|\ve\partial_i\GG_R\|_{L^2}\Ff^{\frac12}+\|\ve\partial_i\GG'_R\|_{L^2}\Ef^{\frac12}).
\end{align*}
We next deal with the term $\mathcal{J}_{8}$. First, we can observe that
\begin{align*}
\mathcal{J}_{8}\leq&-\ve\Big\langle\MH^{\ve}_{\nn}(\partial_iQ_R),\partial_i\dot{Q}_R\Big\rangle
+\ve\|Q_R\|_{L^2}\|\partial_i\dot{Q}_R\|_{L^2}\\
\leq&-\ve\Big\langle\MH^{\ve}_{\nn}(\partial_iQ_R),\dot{\overline{\partial_iQ_R}}\Big\rangle
-\ve\Big\langle\MH^{\ve}_{\nn}(\partial_iQ_R),\partial_i\tilde{\vv}\cdot\nabla Q_R\Big\rangle
+C\Ef^{\frac12}\Ff^{\frac12}\\
\leq&\underbrace{-\ve\Big\langle\MH^{\ve}_{\nn}(\partial_iQ_R),\dot{\overline{\partial_iQ_R}}\Big\rangle}_{\mathcal{J}'_{8}}
+C(\Ef+\Ef^{\frac12}\Ff^{\frac12}).
\end{align*}
Using Lemma \ref{lem:singular}, we get
\begin{align*}
\ve\frac{\ud}{\ud t}\langle\MH^{\ve}_{\nn}(\partial_iQ_R),\partial_iQ_R\rangle
=&2\ve\langle\MH^{\ve}_{\nn}(\partial_iQ_R),\dot{\overline{\partial_iQ_R}}\rangle
+\frac{1}{\ve}\Big\langle bs\big(\dot{\overline{\nn\nn}}\cdot \partial_iQ_R+\partial_iQ_R\cdot\dot{\overline{\nn\nn}}\big)\\
&-2cs^2\big(\partial_iQ_R:\dot{\overline{\nn\nn}}(\nn\nn)+(\partial_iQ_R:\nn\nn)\dot{\overline{\nn\nn}}\big),\partial_iQ_R\Big\rangle\\
\leq&-2\mathcal{J}'_8+C\Ef,
\end{align*}
which implies
\begin{align}\label{mathcalJ-8}
\mathcal{J}_8\leq-\frac{\ve}{2}\frac{\ud}{\ud t}\langle\MH^{\ve}_{\nn}(\partial_iQ_R),\partial_iQ_R\rangle
+\delta\Ff+C\Ef.
\end{align}

Summarizing the above estimates, we get
\begin{align}
&\ve^2\Big\langle\partial_t(\partial_i\vv_R),\partial_i\vv_R\Big\rangle
+\ve^2J\Big\langle\partial_t(\partial_i\dot{Q}_R),\partial_i\dot{Q}_R\Big\rangle\nonumber\\
&\quad
+\frac{\ve}{2}\frac{\ud}{\ud t}\langle\MH^{\ve}_{\nn}(\partial_iQ_R),\partial_iQ_R\rangle
+4\ve^2\delta\|\partial_i\nabla\vv_R\|^2_{L^2}\nonumber\\
&\leq
C\Big(\|\ve\partial_i\GG_R\|_{L^2}\Ff^{\frac12}+\|\ve\partial_i\GG'_R\|_{L^2}\Ef^{\frac12}
+\|\ve\partial_i\FF_R\|_{L^2}\Ef^{\frac12}\Big)\nonumber\\
&\quad+\ve^2\langle\partial_i\widetilde{\FF}_R,\partial_i\dot{Q}_R\rangle
+C\Ef+\delta\Ff.\nonumber
\end{align}
Then using Lemma \ref{FFR-goodterm}-\ref{GG-lem} and Lemma \ref{lem:FR-2}, we obtain
\begin{align}\label{H1-estimate}
&\frac{\ud}{\ud t}\widetilde{\Ef}_1(t)+4\ve^2\delta\|\partial_i\nabla\vv_R\|^2_{L^2}\nonumber\\
&\leq
C(1+\Ef+\ve^2\Ef^2+\ve^8\Ef^5)+\delta\Ff+C(\ve+\ve^2\Ef^{\frac12}+\ve^2\Ef)\Ff.
\end{align}

{\it Step 3. $H^2$-estimate}. Similar to Step 2, one can deduce that
\begin{align}\label{H2-estimate-1}
&\ve^4\Big\langle\partial_t(\Delta\vv_R),\Delta\vv_R\Big\rangle
+\ve^4J\Big\langle\partial_t(\Delta\dot{Q}_R),\Delta\dot{Q}_R\Big\rangle\nonumber\\
&\quad+\frac{\ve^3}{2}\frac{\ud}{\ud t}\langle\MH^{\ve}_{\nn}(\Delta Q_R),\Delta Q_R\rangle
+4\ve^4\delta\|\nabla\Delta\vv_R\|^2_{L^2}\nonumber\\
&\leq
C\Big(\|\ve^2\Delta\GG_R\|_{L^2}\Ff^{\frac12}+\|\ve^2\Delta\GG'_R\|_{L^2}\Ef^{\frac12}
+\|\ve^2\Delta\FF_R\|_{L^2}\Ef^{\frac12}\Big)\nonumber\\
&\quad
+\ve^4\langle\Delta\widetilde{\FF}_R,\Delta\dot{Q}_R\rangle+C\Ef+\delta\Ff.
\end{align}
The proof of (\ref{H2-estimate-1})  is delegated in the Appendix. Likewise, using Lemma \ref{FFR-goodterm}-\ref{GG-lem} and Lemma \ref{lem:FR-2} yields
\begin{align}\label{H2-estimate}
&\frac{\ud}{\ud t}\widetilde{\Ef}_2(t)+4\ve^4\delta\|\nabla\Delta\vv_R\|^2_{L^2}\nonumber\\
&\leq
C(1+\Ef+\ve^2\Ef^2+\ve^8\Ef^5)+\delta\Ff+C(\ve+\ve^2\Ef^{\frac12}+\ve^2\Ef)\Ff.
\end{align}

Combining (\ref{L2-estimate}), (\ref{H1-estimate}) and (\ref{H2-estimate}), we finish the proof of Proposition \ref{prop:energy}.
\end{proof}

The following lemma shows that $\widetilde{\Ef}(t)$ defined by (\ref{compli-energ-func}) and $\Ef(t)$ defined by (\ref{energy-functional-Ef}) can be controlled by each other.
\begin{lemma}\label{equiv-energy-functionals}
If $\mu_1\gg J$, then there exist constants $c_0>0$ and $C_0>0$, such that
\begin{align}\label{equiv-Efs}
c_0(1-\ve \Ef(t))\Ef(t)\leq \widetilde{\Ef}(t)\leq C_0(1+\ve \Ef(t)) \Ef(t).
\end{align}
\end{lemma}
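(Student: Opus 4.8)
The plan is to compare $\widetilde{\Ef}(t)$ and $\Ef(t)$ term by term, treating the three pieces $\widetilde{\Ef}_i$ ($i=0,1,2$) separately. The only genuine differences between $\widetilde{\Ef}$ and $\Ef$ are: (i) in $\widetilde{\Ef}_0$ the quadratic form $|\dot{Q}_R+\vv_R\cdot\nabla Q^\ve+Q_R|^2$ in place of $|\dot{Q}_R|^2+|Q_R|^2$, together with the extra cross term $J\langle\MH^{-1}_{\nn}\CG(Q_R^\top),\dot{Q}_R+\vv_R\cdot\nabla Q^\ve\rangle$; (ii) in $\widetilde{\Ef}_1,\widetilde{\Ef}_2$ the forms $|\partial_i\dot{Q}_R+\partial_i(\vv_R\cdot\nabla Q^\ve)|^2$ and $|\Delta\dot{Q}_R+\Delta(\vv_R\cdot\nabla Q^\ve)|^2$ in place of $|\partial_i\dot{Q}_R|^2$ and $|\Delta\dot{Q}_R|^2$. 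First I would expand the squares, e.g. $J|\dot{Q}_R+\vv_R\cdot\nabla Q^\ve+Q_R|^2 = J|\dot{Q}_R|^2 + J|Q_R|^2 + 2J\dot{Q}_R:(\vv_R\cdot\nabla Q^\ve) + 2J\dot{Q}_R:Q_R + 2J Q_R:(\vv_R\cdot\nabla Q^\ve) + J|\vv_R\cdot\nabla Q^\ve|^2$, and then control each of the ``new'' terms by the terms already present in $\Ef$ up to a factor $\sqrt{J/\mu_1}$ or a factor $\ve\Ef^{1/2}$.

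The key quantitative points are as follows. Using $Q^\ve=\widetilde{Q}+\ve^3 Q_R$ and Lemma \ref{lem:energy}, one has $\|\vv_R\cdot\nabla Q^\ve\|_{L^2}\le C\|\vv_R\|_{L^2}(1+\ve^3\|\nabla Q_R\|_{H^2})\le C\Ef^{1/2}(1+\ve\Ef^{1/2})$, and similarly for the first- and second-order derivative versions with the extra $\ve,\ve^2$ weights absorbed; this is exactly where the factor $(1\pm\ve\Ef)$ in \eqref{equiv-Efs} enters. The cross terms involving $J$ that survive (those not multiplied by $\ve$) are $2J\dot{Q}_R:Q_R$, which is bounded by $\eta\mu_1|Q_R|^2 + (J^2/\eta\mu_1)|\dot{Q}_R|^2$ for any $\eta>0$, and the $\MH^{-1}_{\nn}\CG$ term, which after integration and Proposition \ref{linearized-oper-prop}(iii) is bounded by $CJ\|Q_R\|_{L^2}\big(\|\dot{Q}_R\|_{L^2}+\|\vv_R\cdot\nabla Q^\ve\|_{L^2}\big)\le CJ\big(\|Q_R\|_{L^2}^2+\|\dot{Q}_R\|_{L^2}^2\big)(1+\ve\Ef^{1/2})$. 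Since $\Ef$ contains $\tfrac1\ve\MH^\ve_{\nn}(Q_R):Q_R\ge 0$ and (via Proposition \ref{linearized-oper-prop}(ii) applied to $Q_R^\bot$, plus the estimate $\MH^\ve_{\nn}(Q_R):Q_R = \MH_{\nn}(Q_R):Q_R + \ve\ML(Q_R):Q_R$) a full $\|Q_R\|_{L^2}^2$-worth of control after using $\ve\langle\ML(Q_R),Q_R\rangle\le C\ve\|\nabla Q_R\|_{L^2}^2$ which is itself $\le C\ve\Ef$, one checks that $\mu_1|Q_R|^2$, $J|\dot{Q}_R|^2$ and $\tfrac1\ve\MH^\ve_{\nn}(Q_R):Q_R$ are all dominated by $\Ef$; choosing $\eta$ small and then $J/\mu_1$ small makes the quadratic form in $\widetilde{\Ef}_0$ bounded below by $c_0(|\vv_R|^2+J|\dot{Q}_R|^2+\mu_1|Q_R|^2+\tfrac1\ve\MH^\ve_{\nn}(Q_R):Q_R)$ minus a term of size $C\ve\Ef^2$.

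For the lower bound I would complete the square in the opposite direction: write $J|\dot{Q}_R+\vv_R\cdot\nabla Q^\ve+Q_R|^2 \ge \tfrac{J}{2}|\dot{Q}_R|^2 - CJ|\vv_R\cdot\nabla Q^\ve|^2 - CJ|Q_R|^2$ and absorb $CJ|Q_R|^2$ into $(\mu_1-J)|Q_R|^2$ and the $\tfrac1\ve\MH^\ve_{\nn}(Q_R):Q_R$ term (using $\mu_1\gg J$), and absorb $CJ|\vv_R\cdot\nabla Q^\ve|^2$ into $\ve\Ef\cdot\Ef$ via Lemma \ref{lem:energy}; the $\MH^{-1}_{\nn}\CG$ cross term is handled identically. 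The same expansion works for $\widetilde{\Ef}_1$ and $\widetilde{\Ef}_2$ with the $\ve^2$, $\ve^4$ prefactors, noting that there $J|\partial_i\dot{Q}_R+\partial_i(\vv_R\cdot\nabla Q^\ve)|^2\ge\tfrac J2|\partial_i\dot{Q}_R|^2 - CJ|\partial_i(\vv_R\cdot\nabla Q^\ve)|^2$ and $\ve^2\|\partial_i(\vv_R\cdot\nabla Q^\ve)\|_{L^2}^2\le C\ve^2\|\vv_R\|_{H^1}^2(1+\ve^6\|\nabla Q_R\|_{H^2}^2)\le C\Ef(1+\ve\Ef)$ after redistributing weights, so these corrections are again $O(\ve\Ef)\cdot\Ef$. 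Collecting $\widetilde{\Ef}_0+\widetilde{\Ef}_1+\widetilde{\Ef}_2$ and choosing $J/\mu_1$ small enough gives both inequalities in \eqref{equiv-Efs}. I expect the main obstacle to be purely bookkeeping: carefully tracking which cross terms carry an honest factor of $\ve$ (hence feed the $\ve\Ef$ correction) versus those that do not (hence must be absorbed using $\mu_1\gg J$), and verifying that the positive-definite part of $\tfrac1\ve\MH^\ve_{\nn}(Q_R):Q_R$ really does control a full $\|Q_R\|_{L^2}^2$ after peeling off the $\ve\ML$ contribution — this uses Proposition \ref{linearized-oper-prop}(ii) on the $(\mathrm{Ker}\MH_{\nn})^\perp$ component together with $\|Q_R^\top\|_{L^2}^2\le\|Q_R\|_{L^2}^2$ and the $(\mu_1-J)|Q_R|^2$ reservoir.
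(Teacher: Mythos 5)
Your proposal is correct and follows essentially the same route as the paper: expand the perfect squares in $\widetilde{\Ef}_0,\widetilde{\Ef}_1,\widetilde{\Ef}_2$, absorb the $J$-weighted cross terms (including the $\MH^{-1}_{\nn}\CG(Q_R^{\top})$ term, controlled via Proposition \ref{linearized-oper-prop}) into $(\mu_1-J)|Q_R|^2$ and $\frac1\ve\MH^{\ve}_{\nn}(Q_R):Q_R$ using $\mu_1\gg J$, and bound the $\vv_R\cdot\nabla Q^{\ve}$ contributions by $C(1+\ve\Ef)\Ef$ via Lemma \ref{lem:energy}. The only cosmetic difference is that the paper keeps the square $|S_R|^2$ intact as an intermediate quantity and only writes out the lower bound explicitly, whereas you expand both directions; the substance is identical.
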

\begin{proof}
It suffices to prove the first inequality in (\ref{equiv-Efs}).
Let $S_R=\dot{Q}_R+\vv_R\cdot\nabla Q^{\ve}+Q_R,~ Q^{\ve}=\widetilde{Q}+\ve^3Q_R$ and $Q_R=Q_R^{\top}+Q_R^{\bot}$ with $Q_R^{\top}\in\text{Ker}\MH_{\nn}$ and $Q_R^{\bot}\in(\text{Ker}\MH_{\nn})^{\perp}$. Then using Proposition \ref{linearized-oper-prop} we have
\begin{align*}
\widetilde{\Ef}_0(t)=&\frac12\int_{\BR}\Big(|\vv_R|^2+J|S_R|^2+(\mu_1-J)|Q_R|^2+\frac{1}{\ve}\MH^{\ve}_{\nn}(Q_R):Q_R\Big)\ud\xx\\
&+J\big\langle\MH^{-1}_{\nn}\CG(Q_R^{\top}),S_R\big\rangle
-J\big\langle\MH^{-1}_{\nn}\CG(Q_R^{\top}),Q^{\bot}_R\big\rangle\\
\geq&C\int_{\BR}\Big(|\vv_R|^2+|S_R|^2+\frac{\mu_1}{2}|Q_R|^2+\frac{1}{\ve}\MH^{\ve}_{\nn}(Q_R):Q_R\Big)\ud\xx\\
\geq&C(1-\ve^3 \|\nabla Q_R\|_{L^{\infty}})
\int_{\BR}\Big(|\vv_R|^2+|\dot{Q}_R|^2+|Q_R|^2+\frac{1}{\ve}\MH^{\ve}_{\nn}(Q_R):Q_R\Big)\ud\xx.
\end{align*}
Note that for $m=1,2$, by using H$\ddot{o}$lder inequality, we estimate
\begin{align*}
&\ve^{2m}\big\|\partial^m_i(\vv_R\cdot\nabla Q^{\ve})\big\|_{L^2}^2\\
&\le C\ve^{2m}\| \vv_R\|_{H^m}^2+C\ve^{2m+3}(\|\vv_R\|_{H^m}^2\|\nabla Q_R\|_{H^2}^2+\|\vv_R\|_{H^2}^2\|\nabla Q_R\|_{H^m}^2\big)\\
&\leq C(1+\ve\mathfrak{E})\mathfrak{E},
\end{align*}
which yields that
\begin{align*}
\widetilde{\Ef}_1(t)+\widetilde{\Ef}_2(t)
\geq C&\int_{\BR}\Big(|\partial_i\vv_R|^2+|\partial_i\dot{Q}_R|^2
+\frac{1}{\ve}\MH^{\ve}_{\nn}(\partial_iQ_R):\partial_iQ_R\\
&\quad+|\Delta\vv_R|^2+|\Delta\dot{Q}_R|^2+\frac{1}{\ve}\MH^{\ve}_{\nn}(\Delta Q_R):\Delta Q_R\Big)\ud\xx- \delta_0 (1+\ve\mathfrak{E})\mathfrak{E}.
\end{align*}
Therefore, there exists a constant $c_0>0$ such that $c_0(1-\ve\mathfrak{E}(t))\Ef(t)\leq \widetilde{\Ef}(t)$.
\end{proof}

\subsection{The proof Theorem \ref{thm:main1}}
Given the initial data $(\vv_0^{\ve},\partial_tQ_0^{\ve},\nabla Q_0^{\ve})\in H^2$, it can be proved from the similar energy method in \cite{DZ}
that there exists a maximal time $T_\ve>0$ and a unique solution $(\vv^\ve,Q^\ve)$ of the system (\ref{eq:QS-general-ve1})-(\ref{eq:QS-general-ve3}) such that
\begin{align*}
(\partial_tQ^\ve,\nabla Q^{\ve})\in L^{\infty}([0,T_\ve);H^2)\cap L^2(0,T_\ve;H^2),~~\vv^\ve\in L^{\infty}([0,T_\ve);H^2)\cap L^2(0,T_\ve;H^3).
\end{align*}
From Proposition \ref{prop:energy} and Lemma \ref{equiv-energy-functionals} we have
\begin{align}
\frac{d }{d t}\widetilde{\Ef}(t)+\Ff(t)\le C(1+\widetilde{\Ef}+\ve^2\widetilde{\Ef}^2+\ve^8\widetilde{\Ef}^5)+C(\ve+\ve^2\widetilde{\Ef}^{\frac12}+\ve^2\widetilde{\Ef})\Ff,\non
\end{align}
for any $t\in [0,T_\ve]$. Under the assumptions of Theorem \ref{thm:main1}, it follows that
$$\widetilde{\Ef}(0)\le C_1\Big(\|\vv_{R,0}^\ve\|_{H^2}+\|Q_{R,0}^\ve\|_{H^3}+\|\partial_tQ_{R,0}^{\ve}\|_{H^2}+\ve^{-1}\|\MP^{{ \rm out}}(Q^\ve_{R,0})\|_{L^2}\Big)\le C_1 E_0.$$
Let $\widetilde{E}_1=(1+C_1E_0){e}^{2CT}>\widetilde{\Ef}(0)$, and
$$T_1=\sup\{t\in[0,T_\ve]: \widetilde{\Ef}(t)\le \widetilde{E}_1\}.$$
If we take $\ve_0$ small enough such that
\begin{align*}
4\ve_0\widetilde{E}_1<c_0,\quad\ve^2_0\widetilde{E}_1+\ve^8_0\widetilde{E}^4_1\leq 1,\quad C(\ve_0+\ve^2_0\widetilde{E}^{\frac12}_1+\ve^2_0\widetilde{E}_1)\leq1/2
\end{align*}
then for $t\le T_1$, there holds
\begin{align*}
\frac{d}{dt}\widetilde{\Ef}(t)\le 2C(1+\widetilde{\Ef}).
\end{align*}
Therefore, we can infer by means of a continuous argument that $T_1=T_\ve$,  $T\le T_\ve$ and $\widetilde{\Ef}(t)\le \widetilde{E}_1$ for $t\in[0,T]$.
Moreover, as $c_0(1-\ve\mathfrak{E}(t))\Ef(t)\leq \widetilde{\Ef}(t)\le \widetilde{E}_1< c_0/(4\ve_0)$ and $\mathfrak{E}(t)$ is continuous, we know that
$\Ef(t) $ can not attain $1/(2\ve)$. Otherwise $\widetilde{E}_1\ge c_0/(4\ve)$ which yields a contradiction. Therefore, we have
$\Ef(t) \le 2\widetilde{E}_1/c_0 \triangleq E_1$ for $t\in[0,T]$.
This completes the proof of Theorem \ref{thm:main1}.

\section{Appendix}

\subsection{The energy dissipation relation}
~

\begin{lemma}\label{dissip-rel-QS}
Assume that $\beta_1,\beta_4,\mu_1>0$, and $\beta_4-\frac{\mu_2^2}{4\mu_1}>0$.  Then for any smooth solution $(\vv, Q)$ of the inertial Qian-Sheng system (\ref{eq:Q-general-intro1})-(\ref{eq:Q-general-intro3}), it holds that
\begin{align}\label{QS-dissip}
&\frac{\ud}{\ud t}\Big(\int_{\BR}\frac12\big(|\vv|^2+J|\dot{Q}|^2\big)\ud\xx+\CF(Q,\nabla Q)\Big)\nonumber\\
&=-\beta_1\|Q:\DD\|^2_{L^2}-\Big(\beta_4-\frac{\mu_2^2}{4\mu_1}\Big)\|\DD\|^2_{L^2}
-(\beta_5+\beta_6)\langle\DD\cdot Q,\DD\rangle\nonumber\\
&\quad-2\beta_7\|\DD\cdot Q\|^2_{L^2}
-\mu_1\Big\|\dot{Q}-[\BOm,Q]+\frac{\mu_2}{2\mu_1}\DD\Big\|^2_{L^2}.
\end{align}
Moreover, if one of the following assumptions holds: (i) $\beta_5+\beta_6=0$ if $\beta_7=0$; (ii)
$(\beta_5+\beta_6)^2<8\beta_7\big(\beta_4-\frac{\mu_2^2}{4\mu_1}\big)$ if $\beta_7\neq0$, then the right hand side in (\ref{QS-dissip}) is non-positive.
\end{lemma}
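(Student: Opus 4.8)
The plan is to run the standard energy estimate for the system (\ref{eq:Q-general-intro1})--(\ref{eq:Q-general-intro3}): pair the $Q$-equation with $\dot Q$, pair the momentum equation with $\vv$, differentiate $\CF(Q,\nabla Q)$ along the flow, and add the three resulting identities. First I would use $\nabla\cdot\vv=0$ to record, exactly as in (\ref{ddQ-Q}), that $\langle(\partial_t+\vv\cdot\nabla)P,P\rangle=\frac12\frac{\ud}{\ud t}\|P\|_{L^2}^2$; in particular $J\langle\ddot Q,\dot Q\rangle=\frac J2\frac{\ud}{\ud t}\|\dot Q\|_{L^2}^2$ and $\langle\partial_t\vv+\vv\cdot\nabla\vv,\vv\rangle=\frac12\frac{\ud}{\ud t}\|\vv\|_{L^2}^2$, while the pressure drops out. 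Pairing (\ref{eq:Q-general-intro2}) with $\vv$ gives $\frac12\frac{\ud}{\ud t}\|\vv\|_{L^2}^2=-\langle\sigma,\nabla\vv\rangle-\langle\sigma^d,\nabla\vv\rangle$, and the chain rule together with $\HH=-\delta\CF/\delta Q$ gives $\frac{\ud}{\ud t}\CF=-\langle\HH,\partial_tQ\rangle=-\langle\HH,\dot Q\rangle+\langle\HH,\vv\cdot\nabla Q\rangle$.

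The next step is the two cancellations that remove the distortion stress and the molecular field from the balance. First, $\langle\HH,\vv\cdot\nabla Q\rangle=\langle\sigma^d,\nabla\vv\rangle$: the bulk part of $\HH$ contributes $-\int v_k\partial_k f_b\,\ud\xx=0$ after using $\nabla\cdot\vv=0$, while for the elastic part one integrates by parts twice, again invoking $\int v_k\partial_k f_e\,\ud\xx=0$, to land precisely on $\langle\sigma^d,\nabla\vv\rangle$; thus $\sigma^d$ cancels against the advective term hidden in $\dot Q$. Second, pairing (\ref{eq:Q-general-intro1}) with $\dot Q$ produces $+\langle\HH,\dot Q\rangle$, which cancels the $-\langle\HH,\dot Q\rangle$ above. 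After these cancellations the balance reads $\frac{\ud}{\ud t}E=-\langle\sigma,\nabla\vv\rangle-\mu_1\|\dot Q\|_{L^2}^2-\frac{\mu_2}2\langle\DD,\dot Q\rangle+\mu_1\langle[\BOm,Q],\dot Q\rangle$, where $E$ denotes the total energy appearing on the left of (\ref{QS-dissip}).

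Next I would compute $\langle\sigma,\nabla\vv\rangle$ by splitting $\sigma$ into its symmetric and antisymmetric parts and pairing these with $\DD$ and $\BOm$ respectively. By Parodi's relation (\ref{Q-Parodi}) the antisymmetric part equals $-\frac{\mu_2}2[\DD,Q]+\mu_1[Q,\dot Q-[\BOm,Q]]$. The computation rests on the pointwise identities $\langle\DD\cdot Q,\DD\rangle=\langle Q\cdot\DD,\DD\rangle$ and $\langle\DD\cdot Q^2,\DD\rangle=\langle Q^2\cdot\DD,\DD\rangle=|\DD\cdot Q|^2$, together with the commutator identity $\langle[Q,A],\BOm\rangle=-\langle A,[\BOm,Q]\rangle$ for symmetric $Q,A$ and antisymmetric $\BOm$ (which follows from $\mathrm{Tr}(AQ\BOm)=-\mathrm{Tr}(QA\BOm)$). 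Substituting the result back into the balance, all terms carrying $\mu_1$ or $\mu_2$ collapse to $-\mu_2\langle\dot Q-[\BOm,Q],\DD\rangle-\mu_1\|\dot Q-[\BOm,Q]\|_{L^2}^2$, which I complete to $-\mu_1\big\|\dot Q-[\BOm,Q]+\frac{\mu_2}{2\mu_1}\DD\big\|_{L^2}^2+\frac{\mu_2^2}{4\mu_1}\|\DD\|_{L^2}^2$; absorbing $\frac{\mu_2^2}{4\mu_1}\|\DD\|_{L^2}^2$ into the $-\beta_4\|\DD\|_{L^2}^2$ term yields exactly (\ref{QS-dissip}).

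For the non-positivity, the $\beta_1$-term and the $\mu_1$-square are manifestly $\le0$, so it suffices to show $-(\beta_4-\frac{\mu_2^2}{4\mu_1})|\DD|^2-(\beta_5+\beta_6)\langle\DD\cdot Q,\DD\rangle-2\beta_7|\DD\cdot Q|^2\le0$ pointwise. Since $|\langle\DD\cdot Q,\DD\rangle|\le|\DD\cdot Q|\,|\DD|$ by Cauchy--Schwarz, this is bounded by the quadratic form $-(\beta_4-\frac{\mu_2^2}{4\mu_1})x^2+|\beta_5+\beta_6|xy-2\beta_7y^2$ in $x=|\DD|\ge0$, $y=|\DD\cdot Q|\ge0$. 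When $\beta_7=0$ this is $\le0$ for all such $x,y$ only if $\beta_5+\beta_6=0$; when $\beta_7>0$, regarded as a downward parabola in $y$ it attains maximum $\big(\frac{(\beta_5+\beta_6)^2}{8\beta_7}-(\beta_4-\frac{\mu_2^2}{4\mu_1})\big)x^2$, which is $\le0$ exactly under the hypothesis $(\beta_5+\beta_6)^2<8\beta_7\big(\beta_4-\frac{\mu_2^2}{4\mu_1}\big)$. I expect the only delicate part of the whole argument to be the careful sign-bookkeeping in the commutator manipulations and the double integration by parts establishing $\langle\HH,\vv\cdot\nabla Q\rangle=\langle\sigma^d,\nabla\vv\rangle$ (where incompressibility is used twice); the quadratic-form analysis for non-positivity is then routine.
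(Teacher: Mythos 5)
Your proposal is correct and follows essentially the same route as the paper's proof: pairing the equations with $\dot Q$ and $\vv$, cancelling the distortion stress against $\langle\HH,\vv\cdot\nabla Q\rangle$ via incompressibility, using Parodi's relation and the commutator identity to collapse the $\mu_1,\mu_2$ terms into the completed square, and checking non-positivity pointwise. The only (harmless) differences are presentational: you organize the viscous-stress pairing via a symmetric/antisymmetric splitting where the paper adds and subtracts $\tfrac{\beta_5+\beta_6}{2}$ terms, and you spell out the quadratic-form maximization in $(|\DD|,|\DD\cdot Q|)$ that the paper dismisses as ``easy to obtain.''
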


\begin{proof}
Firstly, using any one of the assumptions, it is easy to obtain  that
$$\Big(\beta_4-\frac{\mu_2^2}{4\mu_1}\Big)|\DD|^2+(\beta_5+\beta_6)(\DD\cdot Q: \DD) +2\beta_7|\DD\cdot Q|^2_{L^2}>c_0 |\DD|^2$$
for some $c_0>0$. Now we prove (\ref{QS-dissip}). Taking $L^2$-inner product with $\dot{Q}$ in the equation (\ref{eq:Q-general-intro1}), and taking $L^2$-inner product with $\vv$ in the equation (\ref{eq:Q-general-intro2}), we get
\begin{align*}
&J\langle\ddot{Q},\dot{Q}\rangle+\langle\partial_t\vv,\vv\rangle\\
&=-\mu_1\big\langle\dot{Q}-[\BOm,Q],\dot{Q}\big\rangle+\langle\HH,\dot{Q}\rangle-\frac{\mu_2}{2}\langle\DD,\dot{Q}\rangle
+\langle\nabla\cdot\sigma^d,\vv\rangle\\
&\quad-\Big\langle\beta_1Q(Q:\DD)+\beta_4\DD+\beta_5\DD\cdot Q+\beta_6 Q\cdot\DD
+\beta_7(\DD\cdot Q^2+Q^2\cdot\DD),\nabla\vv\Big\rangle\\
&\quad-\frac{\mu_2}{2}\langle\dot{Q}-[\BOm,Q],\nabla\vv\rangle
-\mu_1\Big\langle[Q,(\dot{Q}-[\BOm,Q])],\nabla\vv\Big\rangle\\
&\eqdefa I+II+III+IV+V+VI+VII.
\end{align*}
For terms $I$ and $VII$, we have
\begin{align*}
I+VII=&-\mu_1\big\langle\dot{Q}-[\BOm,Q],\dot{Q}\big\rangle
-\mu_1\Big\langle[Q,(\dot{Q}-[\BOm,Q])],\DD+\BOm\Big\rangle\\
=&-\mu_1\big\langle\dot{Q}-[\BOm,Q],\dot{Q}\big\rangle
-\mu_1\Big\langle[Q,\BOm],(\dot{Q}-[\BOm,Q])\Big\rangle\\
=&-\mu_1\|\dot{Q}-[\BOm,Q]\|^2_{L^2}.
\end{align*}
Recalling the relation $\beta_6-\beta_5=\mu_2$, we can deduce that
\begin{align*}
III+V+VI=&-\Big\langle\beta_1Q(Q:\DD)+\beta_4\DD+\beta_5\DD\cdot Q+\beta_6 Q\cdot\DD,\DD+\BOm\Big\rangle\\
&
-\Big\langle\beta_7(\DD\cdot Q^2+Q^2\cdot\DD),\DD\Big\rangle
-\frac{\mu_2}{2}\big\langle2\dot{Q}-[\BOm,Q],\DD\big\rangle\\
=&-\Big\langle\beta_1Q(Q:\DD)+\beta_4\DD+\frac{\beta_5+\beta_6}{2}(\DD\cdot Q+Q\cdot\DD),\DD\Big\rangle\\
&+\Big\langle\Big(\frac{\beta_5+\beta_6}{2}-\beta_5\Big)\DD\cdot Q+\Big(\frac{\beta_5+\beta_6}{2}-\beta_6\Big)Q\cdot\DD, \DD+\BOm\Big\rangle\\
&
-\Big\langle\beta_7(\DD\cdot Q^2+Q^2\cdot\DD),\DD\Big\rangle
-\frac{\mu_2}{2}\big\langle2\dot{Q}-[\BOm,Q],\DD\big\rangle\\
=&-\beta_1\|Q:\DD\|^2_{L^2}-\beta_4\|\DD\|^2_{L^2}
-(\beta_5+\beta_6)\langle\DD\cdot Q,\DD\rangle\\
&-2\beta_7\|\DD\cdot Q\|^2_{L^2}-\mu_2\big\langle\dot{Q}-[\BOm,Q],\DD\big\rangle.
\end{align*}
Further, it follows that
\begin{align*}
&I+III+V+VI+VII\\
&=-\beta_1\|Q:\DD\|^2_{L^2}-\beta_4\|\DD\|^2_{L^2}
-(\beta_5+\beta_6)\langle\DD\cdot Q,\DD\rangle\\
&\quad
-2\beta_7\|\DD\cdot Q\|^2_{L^2}
-\mu_2\big\langle\dot{Q}-[\BOm,Q],\DD\big\rangle
-\mu_1\|\dot{Q}-[\BOm,Q]\|^2_{L^2}\\
&=-\beta_1\|Q:\DD\|^2_{L^2}-\Big(\beta_4-\frac{\mu_2^2}{4\mu_1}\Big)\|\DD\|^2_{L^2}
-(\beta_5+\beta_6)\langle\DD\cdot Q,\DD\rangle\\
&\quad
-2\beta_7\|\DD\cdot Q\|^2_{L^2}
-\mu_1\Big\|\dot{Q}-[\BOm,Q]+\frac{\mu_2}{2\mu_1}\DD\Big\|^2_{L^2}.
\end{align*}
For the second term $II$, note that $\HH(Q)=-\frac{\delta\CF}{\delta Q}$ and $\nabla\cdot\vv=0$, we have
\begin{align*}
II=&-\Big\langle\frac{\delta\CF}{\delta Q},\partial_tQ\Big\rangle+\langle\HH(Q),\vv\cdot\nabla Q\rangle\\
=&-\frac{\ud}{\ud t}\CF(Q,\nabla Q)+\langle\HH(Q),\vv\cdot\nabla Q\rangle.
\end{align*}
Using the definition of the distortion stress $\sigma^d$, we can infer that
\begin{align*}
IV=&-\int_{\BR}\partial_j\Big(\frac{\partial\CF}{\partial Q_{kl,j}}Q_{kl,i}\Big)v_i\ud\xx\\
=&-\int_{\BR}\Big(\partial_j\Big(\frac{\partial\CF}{\partial Q_{kl,j}}\Big)Q_{kl,i}
+\frac{\partial\CF}{\partial Q_{kl,j}}Q_{kl,ij}\Big)v_i\ud\xx\\
=&-\int_{\BR}\Big(H_{kl}(Q)Q_{kl,i}+\frac{\partial\CF}{\partial Q_{kl}}Q_{kl,i}
+\frac{\partial\CF}{\partial Q_{kl,j}}Q_{kl,ij}\Big)v_i\ud\xx\\
=&-\int_{\BR}\Big(H_{kl}(Q)Q_{kl,i}+\partial_i\CF(Q,\nabla Q)\Big)v_i\ud\xx\\
=&-\langle\HH(Q),\vv\cdot\nabla Q\rangle.
\end{align*}
In conclusion, under the assumptions of Lemma \ref{dissip-rel-QS}, we obtain (\ref{QS-dissip}).
\end{proof}

\subsection{The estimate of $\ve^2\langle\partial_i\widetilde{\FF}_R,\partial_i\dot{Q}_R\rangle$}
Similar arguments for Lemma \ref{lem:FR-2} will be applied to the estimate of higher order derivative terms. First of all, note that
$\langle\partial_i\vv_R\cdot\nabla Q_0,\MH_{\nn}(\partial_iQ_R)\rangle=0$, then we have
\begin{align}\label{vvR-MHQR}
&-\ve^2\Big\langle\partial_i(\vv_R\cdot\nabla\widetilde{Q}),\frac{1}{\ve}\partial_i\MH^{\ve}_{\nn}(Q_R)\Big\rangle\nonumber\\
&\leq-\ve^2\Big\langle\partial_i\vv_R\cdot\nabla(Q_0+\ve\widehat{Q}^{\ve}),\frac{1}{\ve}\partial_i\MH^{\ve}_{\nn}(Q_R)\Big\rangle\nonumber\\
&\quad+C\ve\|\vv_R\|_{L^2}\big(\|\partial_i\MH_{\nn}(Q_R)\|_{L^2}+\ve\|\partial_i\ML(Q_R)\|_{L^2}\big)\nonumber\\
&\leq-\ve^3\Big\langle\partial_i\vv_R\cdot\nabla \widehat{Q}^{\ve},\frac{1}{\ve}\partial_i\MH^{\ve}_{\nn}(Q_R)\Big\rangle+C\Ef\nonumber\\
&\leq C\ve^2\|\partial_i\vv_R\|_{L^2}\big(\|\partial_i\MH_{\nn}(Q_R)\|_{L^2}+\ve\|\partial_i\ML(Q_R)\|_{L^2}\big)
+C\Ef\nonumber\\
&\leq C\Ef.
\end{align}
Recalling the equation (\ref{remainder-Q-R}), we derive from the integration by parts over $\xx\in\BR$ that
\begin{align}\label{part1-FFdotQ-R}
\ve^2\langle\partial_i\widetilde{\FF}_1,\partial_i\dot{Q}_R\rangle
&=
-\ve^2J\frac{\ud}{\ud t}\Big\langle\partial_i(\vv_R\cdot\nabla\widetilde{Q}),\partial_i\dot{Q}_R\Big\rangle
-\ve^2J\langle\partial_i\tilde{\vv}\cdot\nabla(\vv_R\cdot\nabla\widetilde{Q}),\partial_i\dot{Q}_R\rangle\nonumber\\
&\quad+\ve^2J\langle\partial_i(\vv_R\cdot\nabla\widetilde{Q}),\partial_i\ddot{Q}_R\rangle
-\ve^2J\langle\partial_i(\vv_R\cdot\nabla\widetilde{Q}),\partial_i\tilde{\vv}\cdot\nabla\dot{Q}_R\rangle\nonumber\\
&\leq
-\ve^2J\frac{\ud}{\ud t}\Big\langle\partial_i(\vv_R\cdot\nabla\widetilde{Q}),\partial_i\dot{Q}_R\Big\rangle
+\ve^2\langle\partial_i(\vv_R\cdot\nabla\widetilde{Q}),\partial_i\widetilde{\FF}_R\rangle\nonumber\\
&\quad+C\Ef^{\frac12}\|\ve\partial_i\FF_R\|_{L^2}
+C(\Ef+\Ef^{\frac12}\Ff^{\frac12}),
\end{align}
where we have applied Lemma \ref{lem:energy} and (\ref{vvR-MHQR}), and the following estimates
\begin{align*}
-\ve^2J\langle\partial_i\tilde{\vv}\cdot\nabla(\vv_R\cdot\nabla\widetilde{Q}),\partial_i\dot{Q}_R\rangle
\leq&C\ve^2\|\vv_R\|_{H^1}\|\partial_i\dot{Q}_R\|_{L^2}
\leq C\Ef,\\
-\ve^2J\langle\partial_i(\vv_R\cdot\nabla\widetilde{Q}),\partial_i\tilde{\vv}\cdot\nabla\dot{Q}_R\rangle
\leq&C\ve^2\|\vv_R\|_{H^1}\|\nabla\dot{Q}_R\|_{L^2}
\leq C\Ef,
\end{align*}
and
\begin{align}\label{ve2-vR-ddQR-d2}
&\ve^2J\langle\partial_i(\vv_R\cdot\nabla\widetilde{Q}),\partial_i\ddot{Q}_R\rangle\nonumber\\
&=-\mu_1\ve^2\langle\partial_i(\vv_R\cdot\nabla\widetilde{Q}),\partial_i\dot{Q}_R\rangle
-\ve^2\Big\langle\partial_i(\vv_R\cdot\nabla\widetilde{Q}),\frac{1}{\ve}\partial_i\MH^{\ve}_{\nn}(Q_R)\Big\rangle\nonumber\\
&\quad+\ve^2\Big\langle\partial_i(\vv_R\cdot\nabla\widetilde{Q}),
-\frac{\mu_2}{2}\partial_i\DD_R+\mu_1\partial_i[\BOm_R,Q_0]\Big\rangle\nonumber\\
&\quad
+\ve^2\langle\partial_i(\vv_R\cdot\nabla\widetilde{Q}),\partial_i\FF_R+\partial_i\widetilde{\FF}_R\rangle\nonumber\\
&\leq C\ve^2\|\vv_R\|_{H^1}(\|\partial_i\dot{Q}_R\|_{L^2}+\|\partial_i\nabla\vv_R\|_{L^2})
+C\Ef\nonumber\\
&\quad+C\ve\|\vv_R\|_{H^1}\|\ve\partial_i\FF_R\|_{L^2}
+\ve^2\langle\partial_i(\vv_R\cdot\nabla\widetilde{Q}),\partial_i\widetilde{\FF}_R\rangle\nonumber\\
&\leq
C(\Ef+\Ef^{\frac12}\Ff^{\frac12})
+C\Ef^{\frac12}\|\ve\partial_i\FF_R\|_{L^2}
+\ve^2\langle\partial_i(\vv_R\cdot\nabla\widetilde{Q}),\partial_i\widetilde{\FF}_R\rangle.
\end{align}
We proceed to deal with the term $\ve^2\langle\partial_i(\vv_R\cdot\nabla\widetilde{Q}),\partial_i\widetilde{\FF}_R\rangle$.
Using integration by parts yields
\begin{align}\label{ve2-vv-WFF1}
&\ve^2\langle\partial_i(\vv_R\cdot\nabla\widetilde{Q}),\partial_i\widetilde{\FF}_1\rangle\nonumber\\
&=
-\ve^2J\frac{\ud}{\ud t}\|\partial_i(\vv_R\cdot\nabla\widetilde{Q})\|^2_{L^2}
-\ve^2J\Big\langle\partial_i(\vv_R\cdot\nabla\widetilde{Q}),
\partial_i\tilde{\vv}\cdot\nabla(\vv_R\cdot\nabla\widetilde{Q})\Big\rangle\nonumber\\
&\leq
-\ve^2J\frac{\ud}{\ud t}\|\partial_i(\vv_R\cdot\nabla\widetilde{Q})\|^2_{L^2}
+C\Ef.
\end{align}
It is obvious from integration by parts that
\begin{align*}
&-\ve^5J\Big\langle\partial_i(\vv_R\cdot\nabla\widetilde{Q}),
(\partial_t+\tilde{\vv}\cdot\nabla)\partial_i(\vv_R\cdot\nabla Q_R)\Big\rangle\\
&\quad=
-\ve^5J\frac{\ud}{\ud t}\Big\langle\partial_i(\vv_R\cdot\nabla\widetilde{Q}),\partial_i(\vv_R\cdot\nabla Q_R)\Big\rangle\\
&\qquad+\underbrace{\ve^5J\Big\langle(\partial_t+\tilde{\vv}\cdot\nabla)\partial_i(\vv_R\cdot\nabla\widetilde{Q}),
\partial_i(\vv_R\cdot\nabla Q_R)\Big\rangle}_{\SSS_2},
\end{align*}
Then by Lemma \ref{lem:energy} we have
\begin{align}\label{ve2-vv-WFF2}
&\ve^2\langle\partial_i(\vv_R\cdot\nabla\widetilde{Q}),\partial_i\widetilde{\FF}_2\rangle\nonumber\\
&=
-\ve^5J\frac{\ud}{\ud t}\Big\langle\partial_i(\vv_R\cdot\nabla\widetilde{Q}),\partial_i(\vv_R\cdot\nabla Q_R)\Big\rangle
+\SSS_2\nonumber\\
&\quad
-\ve^5J\Big\langle\partial_i(\vv_R\cdot\nabla\widetilde{Q}),
\partial_i\tilde{\vv}\cdot\nabla(\vv_R\cdot\nabla Q_R)\Big\rangle
+\ve^5J\Big\langle\Delta(\vv_R\cdot\nabla\widetilde{Q}),
\vv_R\cdot\nabla \dot{Q}_R\Big\rangle\nonumber\\
&\quad
-\ve^8J\Big\langle(\vv_R\cdot\nabla)\Delta(\vv_R\cdot\nabla\widetilde{Q}),\vv_R\cdot\nabla Q_R\Big\rangle\nonumber\\
&\leq
-\ve^5J\frac{\ud}{\ud t}\Big\langle\partial_i(\vv_R\cdot\nabla\widetilde{Q}),\partial_i(\vv_R\cdot\nabla Q_R)\Big\rangle+\SSS_2
+C\ve^5\|\vv_R\|_{H^1}\|\vv_R\|_{H^2}\|\nabla Q_R\|_{H^1}\nonumber\\
&\quad
+C\ve^5\|\vv_R\|^2_{H^2}\|\nabla\dot{Q}_R\|_{L^2}
+C\ve^8\|\vv_R\|^2_{H^2}\|\vv_R\|_{H^3}\|\nabla Q_R\|_{L^2}\nonumber\\
&\leq
-\ve^5J\frac{\ud}{\ud t}\Big\langle\partial_i(\vv_R\cdot\nabla\widetilde{Q}),\partial_i(\vv_R\cdot\nabla Q_R)\Big\rangle+\SSS_2\nonumber\\
&\quad+C(\ve\Ef^{\frac32}+\ve^2\Ef^2+\ve\Ef\Ff^{\frac12}+\ve^2\Ef^{\frac32}\Ff^{\frac12}).
\end{align}
Thus from (\ref{ve2-vv-WFF1}) and (\ref{ve2-vv-WFF2}) we conclude that
\begin{align}\label{P-ivvRQ-FFR}
&\ve^2\langle\partial_i(\vv_R\cdot\nabla\widetilde{Q}),\partial_i\widetilde{\FF}_R\rangle\nonumber\\
&\leq-\ve^2J\frac{\ud}{\ud t}\|\partial_i(\vv_R\cdot\nabla\widetilde{Q})\|^2_{L^2}
-\ve^5J\frac{\ud}{\ud t}\Big\langle\partial_i(\vv_R\cdot\nabla\widetilde{Q}),\partial_i(\vv_R\cdot\nabla Q_R)\Big\rangle\nonumber\\
&\quad
+\SSS_2+C(\Ef+\ve^2\Ef^2+\ve^2\Ef\Ff).
\end{align}

We are now in a position to estimate the term $\ve^2\langle\partial_i\widetilde{\FF}_2,\partial_i\dot{Q}_R\rangle$. First, via employing integration by parts we find
\begin{align*}
&\ve^2\langle\partial_i\widetilde{\FF}_2,\partial_i\dot{Q}_R\rangle\\
&=
-\ve^5J\Big\langle\partial_i(\partial_t+\tilde{\vv}\cdot\nabla)(\vv_R\cdot\nabla Q_R),
\partial_i\dot{Q}_R\Big\rangle
-\ve^5J\Big\langle\partial_i(\vv_R\cdot\nabla \dot{Q}_R),
\partial_i\dot{Q}_R\Big\rangle\\
&\quad
-\ve^8J\Big\langle\partial_i(\vv_R\cdot\nabla(\vv_R\cdot\nabla Q_R)),\partial_i\dot{Q}_R\Big\rangle\\
&=
-\ve^5J\frac{\ud}{\ud t}\Big\langle\partial_i(\vv_R\cdot\nabla Q_R),\partial_i\dot{Q}_R\Big\rangle
+\ve^5J\Big\langle\partial_i(\vv_R\cdot\nabla Q_R),\partial_i\ddot{Q}_R\Big\rangle\\
&\quad
\underbrace{-\ve^5J\Big\langle\partial_i\tilde{\vv}\cdot\nabla(\vv_R\cdot\nabla Q_R),
\partial_i\dot{Q}_R\Big\rangle}_{\mathcal{B}_1}
\underbrace{-\ve^5J\Big\langle\partial_i(\vv_R\cdot\nabla Q_R),\partial_i\tilde{\vv}\cdot\nabla\dot{Q}_R\Big\rangle}_{\mathcal{B}_2}\\
&\quad\underbrace{-\ve^5J\langle\partial_i\vv_R\cdot\nabla\dot{Q}_R,\partial_i\dot{Q}_R\rangle}_{\mathcal{B}_3}
\underbrace{-\ve^8J\Big\langle\partial_i(\vv_R\cdot\nabla(\vv_R\cdot\nabla Q_R)),\partial_i\dot{Q}_R\Big\rangle}_{\mathcal{B}_4}.
\end{align*}
Using Lemma \ref{lem:energy}, we have
\begin{align*}
\mathcal{B}_1
\leq&C\ve^5\|\vv_R\|_{H^2}\|\nabla Q_R\|_{H^1}\|\partial_i\dot{Q}_R\|_{L^2}\leq C\ve\Ef^{\frac32},\\
\mathcal{B}_2
\leq&C\ve^5\|\vv_R\|_{H^2}\|\nabla Q_R\|_{H^1}\|\nabla\dot{Q}_R\|_{L^2}\leq C\ve\Ef^{\frac32},\\
\mathcal{B}_3
\leq&C\ve^5\|\partial_i\vv_R\|_{H^2}\|\nabla\dot{Q}_R\|_{L^2}\|\partial_i\dot{Q}_R\|_{L^2}
\leq C\ve\Ef\Ff^{\frac12},
\end{align*}
and
\begin{align*}
\mathcal{B}_4
=&-\ve^8J\Big\langle\partial_i\vv_R\cdot\nabla(\vv_R\cdot\nabla Q_R),\partial_i\dot{Q}_R\Big\rangle
-\underbrace{\ve^8J\Big\langle(\vv_R\cdot\nabla)\partial_i(\vv_R\cdot\nabla Q_R),\partial_i\dot{Q}_R\Big\rangle}_{\WW_2}\\
\leq&-\WW_2+C\ve^8\|\partial_i\vv_R\|_{H^2}\|\vv_R\|_{H^2}\|\nabla Q_R\|_{H^1}\|\partial_i\dot{Q}_R\|_{L^2}\\
\leq&-\WW_2+C(\ve^2\Ef^2+\ve^2\Ef^{\frac32}\Ff^{\frac12}).
\end{align*}
Similar to the estimate of (\ref{ve2-vR-ddQR-d2}), from the equation (\ref{remainder-Q-R}) we get
\begin{align*}
&\ve^5J\langle\partial_i(\vv_R\cdot\nabla Q_R),\partial_i\ddot{Q}_R\rangle\nonumber\\
&=-\ve^5\mu_1\langle\partial_i(\vv_R\cdot\nabla Q_R),\partial_i\dot{Q}_R\rangle
-\ve^5\Big\langle\partial_i(\vv_R\cdot\nabla Q_R),\frac{1}{\ve}\partial_i\MH^{\ve}_{\nn}(Q_R)\Big\rangle\nonumber\\
&\quad+\ve^5\Big\langle\partial_i(\vv_R\cdot\nabla Q_R),-\frac{\mu_2}{2}\partial_i\DD_R+\mu_1\partial_i[\BOm_R,Q_0]\Big\rangle\nonumber\\
&\quad+\ve^5\langle\partial_i(\vv_R\cdot\nabla Q_R),\partial_i\FF_R+\partial_i\widetilde{\FF}_R\rangle\nonumber\\
&\leq C\ve^5\|\vv_R\|_{H^2}\|\nabla Q_R\|_{H^1}
\big(\|\partial_i\dot{Q}_R\|_{L^2}+\|\partial_i\nabla\vv_R\|_{L^2}+\|\partial_i\FF_R\|_{L^2}\big)\nonumber\\
&\quad+C\ve^4\|\vv_R\|_{H^2}\|\nabla Q_R\|_{H^1}
\big(\|\partial_i\MH_{\nn}(Q_R)\|_{L^2}+\ve\|\partial_i\ML(Q_R)\|_{L^2}\big)\nonumber\\
&\quad+\ve^5\langle\partial_i(\vv_R\cdot\nabla Q_R),\partial_i\widetilde{\FF}_R\rangle\nonumber\\
&\leq\ve^5\langle\partial_i(\vv_R\cdot\nabla Q_R),\partial_i\widetilde{\FF}_R\rangle
+C(\ve\Ef^{\frac32}+\ve\Ef\Ff^{\frac12})+C\ve\Ef\|\ve\partial_i\FF_R\|_{L^2}.
\end{align*}
Thus collecting the above estimates, we can deduce that
\begin{align}\label{PFF4-dQR}
\ve^2\langle\partial_i\widetilde{\FF}_2,\partial_i\dot{Q}_R\rangle
&\leq
-\ve^5J\frac{\ud}{\ud t}\Big\langle\partial_i(\vv_R\cdot\nabla Q_R),\partial_i\dot{Q}_R\Big\rangle
+\ve^5\langle\partial_i(\vv_R\cdot\nabla Q_R),\partial_i\widetilde{\FF}_R\rangle\nonumber\\
&\quad-\WW_2+C(1+\Ef+\ve^2\Ef^2+\ve^8\Ef^5)+C\ve^2\Ef\Ff.
\end{align}

Our next task is to calculate the term $\ve^5\langle\partial_i(\vv_R\cdot\nabla Q_R),\partial_i\widetilde{\FF}_R\rangle$.
It is evident to see from integration by parts that
\begin{align*}
\ve^5\langle\partial_i(\vv_R\cdot\nabla Q_R),\partial_i\widetilde{\FF}_1\rangle
&=
-\SSS_2-J\ve^5\Big\langle\partial_i(\vv_R\cdot\nabla Q_R),\partial_i\tilde{\vv}\cdot\nabla(\vv_R\cdot\nabla\widetilde{Q})\Big\rangle\\
&\leq
-\SSS_2+C\ve^5\|\vv_R\|_{H^2}\|\nabla Q_R\|_{H^1}\|\vv_R\|_{H^1}\\
&\leq-\SSS_2+C\ve\Ef^{\frac32}.
\end{align*}
In addition, by integrating by parts we also have
\begin{align*}
&\ve^5\langle\partial_i(\vv_R\cdot\nabla Q_R),\partial_i\widetilde{\FF}_2\rangle\\
&=
-\ve^8J\frac{\ud}{\ud t}\|\partial_i(\vv_R\cdot\nabla Q_R)\|^2_{L^2}
-\ve^8J\Big\langle\partial_i(\vv_R\cdot\nabla Q_R),\partial_i\tilde{\vv}\cdot\nabla(\vv_R\cdot\nabla Q_R)\Big\rangle\\
&\quad
+\WW_2-\ve^8J\langle\partial_i(\vv_R\cdot\nabla Q_R),\partial_i\vv_R\cdot\nabla \dot{Q}_R\rangle\\
&\quad
-\ve^{11}J\Big\langle\partial_i(\vv_R\cdot\nabla Q_R),\partial_i(\vv_R\cdot\nabla(\vv_R\cdot\nabla Q_R))\Big\rangle\\
&\leq
-\ve^8J\frac{\ud}{\ud t}\|\partial_i(\vv_R\cdot\nabla Q_R)\|^2_{L^2}
+\WW_2
+C\ve^8\|\vv_R\|^2_{H^2}\|\nabla Q_R\|^2_{H^1}\\
&\quad+C\ve^8\|\vv_R\|^2_{H^2}\|\nabla Q_R\|_{H^2}\|\nabla\dot{Q}_R\|_{L^2}
+C\ve^{11}\|\vv_R\|^2_{H^2}\|\nabla Q_R\|^2_{H^1}\|\partial_i\vv_R\|_{H^2}\\
&\leq
-\ve^8J\frac{\ud}{\ud t}\|\partial_i(\vv_R\cdot\nabla Q_R)\|^2_{L^2}
+\WW_2
+C(\ve^2\Ef^2+\ve^4\Ef^{\frac52}+\ve^2\Ef^{\frac32}\Ff^{\frac12}+\ve^3\Ef^2\Ff^{\frac12}).
\end{align*}

Thus, combining the  latest two bounds with (\ref{part1-FFdotQ-R}) and (\ref{P-ivvRQ-FFR})-(\ref{PFF4-dQR}), it follows that
\begin{align*}
\ve^2\langle\partial_i\FF_R,\partial_i\dot{Q}_R\rangle
&\leq
-\ve^2\frac{J}{2}\frac{\ud}{\ud t}\|\partial_i(\vv_R\cdot Q^{\ve})\|^2_{L^2}
-\ve^2J\frac{\ud}{\ud t}\Big\langle\partial_i(\vv_R\cdot\nabla Q^{\ve}),\partial_i\dot{Q}_R\Big\rangle\\
&\quad
+C(1+\Ef+\ve^2\Ef^2+\ve^8\Ef^5)+(\delta+C\ve^2\Ef)\Ff.
\end{align*}

\subsection{The estimate of $\ve^4\langle\Delta\widetilde{\FF}_R,\Delta\dot{Q}_R\rangle$}

First note that
\begin{align*}
\Delta(\partial_t+\tilde{\vv}\cdot\nabla)
=(\partial_t+\tilde{\vv}\cdot\nabla)\Delta+\Delta\tilde{\vv}\cdot\nabla+2\partial_i\tilde{\vv}\cdot\nabla\partial_i,
\end{align*}
then from integration by parts we obtain
\begin{align}
\ve^4\langle\Delta\widetilde{\FF}_1,\Delta\dot{Q}_R\rangle
&=
-\ve^4J\Big\langle\Delta(\partial_t+\tilde{\vv}\cdot\nabla)(\vv_R\cdot\nabla\widetilde{Q}),\Delta\dot{Q}_R\Big\rangle\nonumber\\
&=
-\ve^4J\frac{\ud}{\ud t}\Big\langle\Delta(\vv_R\cdot\nabla\widetilde{Q}),\Delta\dot{Q}_R\Big\rangle
+\ve^4J\Big\langle\Delta(\vv_R\cdot\nabla\widetilde{Q}),\Delta\ddot{Q}_R\Big\rangle\nonumber\\
&\quad\underbrace{-\ve^4J\Big\langle(\Delta\tilde{\vv}\cdot\nabla+2\partial_i\tilde{\vv}\cdot\nabla\partial_i)
(\vv_R\cdot\nabla\widetilde{Q}),\Delta\dot{Q}_R\Big\rangle}_{\mathcal{C}_1}\nonumber\\
&\quad\underbrace{-\ve^4J\Big\langle\Delta(\vv_R\cdot\nabla\widetilde{Q}),
(\Delta\tilde{\vv}\cdot\nabla+2\partial_i\tilde{\vv}\cdot\nabla\partial_i)\dot{Q}_R\Big\rangle}_{\mathcal{C}_2}.\nonumber
\end{align}
It can be estimated by Lemma \ref{lem:energy} that
\begin{align*}
\mathcal{C}_1\leq&C\ve^4\|\vv_R\|_{H^2}\|\Delta\dot{Q}_R\|_{L^2}\leq C\Ef,\\
\mathcal{C}_2\leq&C\ve^4\|\vv_R\|_{H^2}\|\dot{Q}_R\|_{H^2}\leq C\Ef.
\end{align*}
Keeping the equation (\ref{remainder-Q-R}) in mind, we can deduce that
\begin{align}
&\ve^4J\Big\langle\Delta(\vv_R\cdot\nabla\widetilde{Q}),\Delta\ddot{Q}_R\Big\rangle\nonumber\\
&=
\underbrace{-\ve^4\mu_1\langle\Delta(\vv_R\cdot\nabla\widetilde{Q}),\Delta\dot{Q}_R\rangle}_{\mathcal{C}_3}
\underbrace{-\ve^4\Big\langle\Delta(\vv_R\cdot\nabla \widetilde{Q}),\frac{1}{\ve}\Delta\MH^{\ve}_{\nn}(Q_R)\Big\rangle}_{\mathcal{C}_4}\nonumber\\
&\quad+\underbrace{\ve^4\Big\langle\Delta(\vv_R\cdot\nabla\widetilde{Q}),
-\frac{\mu_2}{2}\Delta\DD_R+\mu_1\Delta[\BOm_R,Q_0]\Big\rangle}_{\mathcal{C}_5}
+\ve^4\langle\Delta(\vv_R\cdot\nabla\widetilde{Q}),\Delta\FF_R+\Delta\widetilde{\FF}_R\rangle.\nonumber
\end{align}
Using Lemma \ref{lem:energy}, we have
\begin{align*}
\mathcal{C}_3\leq&C\ve^4\|\vv_R\|_{H^2}\|\Delta\dot{Q}_R\|_{L^2}\leq C\Ef,\\
\mathcal{C}_5\leq&C\ve^4\|\vv_R\|_{H^2}\|\nabla\Delta\vv_R\|_{L^2}\leq C\Ef^{\frac12}\Ff^{\frac12},\\
\mathcal{C}_4=&\ve^3\Big\langle\partial_i\Delta(\vv_R\cdot\nabla \widetilde{Q}),\partial_i\MH^{\ve}_{\nn}(Q_R)\Big\rangle\\
\leq&C\ve^3\|\vv_R\|_{H^3}\big(\|\partial_i\MH_{\nn}(Q_R)\|_{L^2}+\ve\|\partial_i\ML(Q_R)\|_{L^2}\big)
\leq C(\Ef+\Ef^{\frac12}\Ff^{\frac12}).
\end{align*}
Then we get
\begin{align}\label{DeFF1-DdQR}
\ve^4\langle\Delta\widetilde{\FF}_1,\Delta\dot{Q}_R\rangle
&\leq
-\ve^4J\frac{\ud}{\ud t}\Big\langle\Delta(\vv_R\cdot\nabla\widetilde{Q}),\Delta\dot{Q}_R\Big\rangle
+\ve^4\langle\Delta(\vv_R\cdot\nabla\widetilde{Q}),\Delta\widetilde{\FF}_R\rangle\nonumber\\
&\quad+C\Ef^{\frac12}\|\ve^2\Delta\FF_R\|_{L^2}
+C(\Ef+\Ef^{\frac12}\Ff^{\frac12}).
\end{align}
Next, we estimate the quantity $\ve^4\langle\Delta(\vv_R\cdot\nabla\widetilde{Q}),\Delta\widetilde{\FF}_R\rangle$. Direct calculations yield that
\begin{align*}
&\ve^4\langle\Delta(\vv_R\cdot\nabla\widetilde{Q}),\Delta\widetilde{\FF}_1\rangle\\
&=
-\ve^4J\frac{\ud}{\ud t}\|\Delta(\vv_R\cdot\nabla\widetilde{Q})\|^2_{L^2}
-\ve^4J\Big\langle\Delta(\vv_R\cdot\nabla\widetilde{Q}),
\Delta\tilde{\vv}\cdot\nabla(\vv_R\cdot\nabla\widetilde{Q})\Big\rangle\\
&\quad
-\ve^4J\Big\langle\Delta(\vv_R\cdot\nabla\widetilde{Q}),
2\partial_i\tilde{\vv}\cdot\nabla\partial_i(\vv_R\cdot\nabla\widetilde{Q})\Big\rangle\\
&\leq
-\ve^4J\frac{\ud}{\ud t}\|\Delta(\vv_R\cdot\nabla\widetilde{Q})\|^2_{L^2}
+C\Ef.
\end{align*}
Using integration by parts, we derive the following bound
\begin{align*}
&\ve^4\langle\Delta(\vv_R\cdot\nabla\widetilde{Q}),\Delta\widetilde{\FF}_2\rangle\\
&=
-\ve^7J\frac{\ud}{\ud t}\Big\langle\Delta(\vv_R\cdot\nabla\widetilde{Q}),\Delta(\vv_R\cdot\nabla Q_R)\Big\rangle
+\underbrace{\ve^7J\langle\partial_i\Delta(\vv_R\cdot\nabla\widetilde{Q}),
\partial_i(\vv_R\cdot\nabla\dot{Q}_R)\rangle}_{\mathcal{C}_6}\\
&\quad
+\underbrace{\ve^7J\Big\langle(\partial_t+\tilde{\vv}\cdot\nabla)\Delta(\vv_R\cdot\nabla\widetilde{Q}),
\Delta(\vv_R\cdot\nabla Q_R)\Big\rangle}_{\SSS_3}\\
&\quad
\underbrace{-\ve^7J\Big\langle\Delta(\vv_R\cdot\nabla\widetilde{Q}),(\Delta\tilde{\vv}\cdot\nabla+2\partial_i\tilde{\vv}\cdot\nabla\partial_i)(\vv_R\cdot\nabla Q_R)\Big\rangle}_{\mathcal{C}_7}\\
&\quad
+\underbrace{\ve^{10}J\Big\langle\partial_i\Delta(\vv_R\cdot\nabla\widetilde{Q}),\partial_i(\vv_R\cdot\nabla(\vv_R\cdot\nabla Q_R))\Big\rangle}_{\mathcal{C}_8}.
\end{align*}
According to Lemma \ref{lem:energy}, we obtain
\begin{align*}
\mathcal{C}_6\leq&C\ve^7\|\vv_R\|_{H^3}\|\vv_R\|_{H^2}\|\nabla\dot{Q}_R\|_{H^1}
\leq C(\ve\Ef^{\frac32}+\ve\Ef\Ff^{\frac12}),\\
\mathcal{C}_7\leq&C\ve^7\|\vv_R\|^2_{H^2}\|\nabla Q_R\|_{H^2}
\leq C\ve\Ef^{\frac32},\\
\mathcal{C}_8\leq &C\ve^{10}\|\vv_R\|_{H^3}\|\vv_R\|^2_{H^2}\|\nabla Q_R\|_{H^2}
\leq C(\ve^2\Ef^2+\ve^2\Ef^{\frac32}\Ff^{\frac12}).
\end{align*}
Then we have
\begin{align}\label{ve4-vv-FFR}
&\ve^4\langle\Delta(\vv_R\cdot\nabla\widetilde{Q}),\Delta\widetilde{\FF}_R\rangle\nonumber\\
&\leq
-\ve^4J\frac{\ud}{\ud t}\|\Delta(\vv_R\cdot\nabla\widetilde{Q})\|^2_{L^2}
-\ve^7J\frac{\ud}{\ud t}\Big\langle\Delta(\vv_R\cdot\nabla\widetilde{Q}),\Delta(\vv_R\cdot\nabla Q_R)\Big\rangle\nonumber\\
&\quad
+\SSS_3
+C(\Ef+\ve\Ef^{\frac32}+\ve^2\Ef^2+\ve\Ef\Ff^{\frac12}+\ve^2\Ef^{\frac32}\Ff^{\frac12}).
\end{align}

Finally, it remains to estimate $\ve^4\langle\Delta\widetilde{\FF}_2,\Delta\dot{Q}_R\rangle$.
By integration by parts, we have
\begin{align*}
&\ve^4\langle\Delta\widetilde{\FF}_2,\Delta\dot{Q}_R\rangle\\
&=-\ve^7J\Big\langle\Delta(\partial_t+\tilde{\vv}\cdot\nabla)(\vv_R\cdot\nabla Q_R),
\Delta\dot{Q}_R\Big\rangle
-\ve^7J\langle\Delta(\vv_R\cdot\nabla\dot{Q}_R),\Delta\dot{Q}_R\rangle\\
&\quad
-\ve^{10}J\Big\langle\Delta(\vv_R\cdot\nabla(\vv_R\cdot\nabla Q_R)),\Delta\dot{Q}_R\Big\rangle\\
&=
-\ve^7J\frac{\ud}{\ud t}\Big\langle\Delta(\vv_R\cdot\nabla Q_R),\Delta\dot{Q}_R\Big\rangle
+\ve^7J\Big\langle\Delta(\vv_R\cdot\nabla Q_R),\Delta\ddot{Q}_R\Big\rangle\\
&\quad
\underbrace{-\ve^7J\Big\langle\Delta\tilde{\vv}\cdot\nabla(\vv_R\cdot\nabla Q_R),
\Delta\dot{Q}_R\Big\rangle}_{\mathcal{D}_1}
\underbrace{-\ve^7J\Big\langle2\partial_i\tilde{\vv}\cdot\nabla\partial_i(\vv_R\cdot\nabla Q_R),
\Delta\dot{Q}_R\Big\rangle}_{\mathcal{D}_2}\\
&\quad
\underbrace{-\ve^7J\Big\langle\Delta(\vv_R\cdot\nabla Q_R),
(\Delta\tilde{\vv}\cdot\nabla+2\partial_i\tilde{\vv}\cdot\nabla\partial_i)\dot{Q}_R\Big\rangle}_{\mathcal{D}_3}\\
&\quad
\underbrace{-\ve^7J\Big\langle\Delta\vv_R\cdot\nabla\dot{Q}_R+2\partial_i\vv_R\cdot\nabla\partial_i\dot{Q}_R,
\Delta\dot{Q}_R\Big\rangle}_{\mathcal{D}_4}\\
&\quad
\underbrace{-\ve^{10}J\Big\langle\Delta(\vv_R\cdot\nabla(\vv_R\cdot\nabla Q_R)),\Delta\dot{Q}_R\Big\rangle}_{\mathcal{D}_5}.
\end{align*}
Using Lemma \ref{lem:energy}, we get
\begin{align*}
\mathcal{D}_1\leq&C\ve^7\|\vv_R\|_{H^2}\|\nabla Q_R\|_{H^1}\|\Delta\dot{Q}_R\|_{L^2}
\leq C\ve^2\Ef^{\frac32},\\
\mathcal{D}_2\leq&C\ve^7\|\vv_R\|_{H^2}\|\nabla Q_R\|_{H^2}\|\Delta\dot{Q}_R\|_{L^2}
\leq C\ve\Ef^{\frac32},\\
\mathcal{D}_3\leq&C\ve^7\|\vv_R\|_{H^2}\|\nabla Q_R\|_{H^2}\|\dot{Q}_R\|_{H^2}
\leq C\ve\Ef^{\frac32},\\
\mathcal{D}_4\leq&C\ve^7\|\vv_R\|_{H^3}\|\dot{Q}_R\|_{H^2}\|\Delta\dot{Q}_R\|_{L^2}
+C\ve^7\|\partial_i\vv_R\|_{H^2}\|\nabla\partial_i\dot{Q}_R\|_{L^2}\|\Delta\dot{Q}_R\|_{L^2}\\
\leq& C(\ve\Ef^{\frac32}+\ve\Ef\Ff^{\frac12}),
\end{align*}
and
\begin{align*}
\mathcal{D}_5=&-\ve^{10}J\Big\langle\Delta\vv_R\cdot\nabla(\vv_R\cdot\nabla Q_R),\Delta\dot{Q}_R\Big\rangle
-\ve^{10}J\Big\langle2\partial_i\vv_R\cdot\nabla\partial_i(\vv_R\cdot\nabla Q_R),\Delta\dot{Q}_R\Big\rangle\\
&
-\underbrace{\ve^{10}J\Big\langle(\vv_R\cdot\nabla)\Delta(\vv_R\cdot\nabla Q_R),\Delta\dot{Q}_R\Big\rangle}_{\WW_3}\\
\leq&-\WW_3+C\ve^{10}\|\vv_R\|_{H^3}\|\vv_R\|_{H^2}\|\nabla Q_R\|_{H^2}\|\Delta\dot{Q}_R\|_{L^2}\\
\leq&-\WW_3+C(\ve^2\Ef^2+\ve^2\Ef^{\frac32}\Ff^{\frac12}).
\end{align*}
From the equation (\ref{remainder-Q-R}), we obtain
\begin{align*}
&\ve^7J\langle\Delta(\vv_R\cdot\nabla Q_R),\Delta\ddot{Q}_R\rangle\nonumber\\
&=\underbrace{-\ve^7\mu_1\langle\Delta(\vv_R\cdot\nabla Q_R),\Delta\dot{Q}_R\rangle}_{\mathcal{D}_6}
\underbrace{-\ve^7\Big\langle\Delta(\vv_R\cdot\nabla Q_R),\frac{1}{\ve}\Delta\MH^{\ve}_{\nn}(Q_R)\Big\rangle}_{\mathcal{D}_7}\nonumber\\
&\quad\underbrace{+\ve^7\Big\langle\Delta(\vv_R\cdot\nabla Q_R),-\frac{\mu_2}{2}\Delta\DD_R+\mu_1\Delta[\BOm_R,Q_0]\Big\rangle}_{\mathcal{D}_8}
+\underbrace{\ve^7\langle\Delta(\vv_R\cdot\nabla Q_R),\Delta\FF_R\rangle}_{\mathcal{D}_{9}}\nonumber\\
&\quad
+\ve^7\langle\Delta(\vv_R\cdot\nabla Q_R),\Delta\widetilde{\FF}_R\rangle.
\end{align*}
Likewise, applying Lemma \ref{lem:energy} leads to
\begin{align*}
\mathcal{D}_6\leq&C\ve^7\|\vv_R\|_{H^2}\|\nabla Q_R\|_{H^2}\|\Delta \dot{Q}_R\|_{L^2}
\leq C\ve\Ef^{\frac32},\\
\mathcal{D}_8\leq&C\ve^7\|\vv_R\|_{H^2}\|\nabla Q_R\|_{H^2}\|\nabla\Delta\vv_R\|_{L^2}
\leq C\ve\Ef\Ff^{\frac12},\\
\mathcal{D}_{9}\leq&C\ve^7\|\vv_R\|_{H^2}\|\nabla Q_R\|_{H^2}\|\Delta\FF_R\|_{L^2}
\leq C\ve\Ef\|\ve^2\Delta\FF_R\|_{L^2}.
\end{align*}
Notice that if we replace $\vv_0$ and $Q$ with $\vv_R$ and $\Delta Q_R$ in (\ref{MLQ-vNQ}), respectively, then it follows that
\begin{align*}
-\ve^7\Big\langle(\vv_R\cdot\nabla)\Delta Q_R,\ML(\Delta Q_R)\Big\rangle
\leq C\ve^7\|\nabla\vv_R\|_{H^2}\|\Delta Q_R\|^2_{H^1}
\leq C(\ve\Ef^{\frac32}+\ve\Ef\Ff^{\frac12}).
\end{align*}
Then we have
\begin{align*}
\mathcal{D}_7=&
-\ve^6\Big\langle\big(\Delta\vv_R\cdot\nabla+2\partial_i\vv_R\cdot\nabla\partial_i\big)Q_R,\Delta\MH_{\nn}(Q_R)\Big\rangle\\
&-\ve^6\Big\langle(\vv_R\cdot\nabla)\Delta Q_R,\Delta\MH_{\nn}(Q_R)\Big\rangle\\
&
+\ve^7\Big\langle\partial_j\big(\Delta\vv_R\cdot\nabla+2\partial_i\vv_R\cdot\nabla\partial_i\big)Q_R,\partial_j\ML(Q_R)\Big\rangle\\
&-\ve^7\Big\langle(\vv_R\cdot\nabla)\Delta Q_R,\ML(\Delta Q_R)\Big\rangle\\
\leq&C\ve^6\|\vv_R\|_{H^3}\|Q\|^2_{H^2}
+C\ve^6\|\vv_R\|_{H^2}\|\nabla\Delta Q_R\|_{L^2}\|Q_R\|_{H^2}\\
&+C\ve^7\|\vv_R\|_{H^3}\|Q_R\|^2_{H^3}
+C(\ve\Ef^{\frac32}+\ve\Ef\Ff^{\frac12})\\
\leq& C(\ve\Ef^{\frac32}+\ve\Ef\Ff^{\frac12}).
\end{align*}
Thus the following bound holds
\begin{align}\label{ve4-FF2-dQR}
\ve^4\langle\Delta\widetilde{\FF}_2,\Delta\dot{Q}_R\rangle
&\leq
-\ve^7J\frac{\ud}{\ud t}\Big\langle\Delta(\vv_R\cdot\nabla Q_R),\Delta\dot{Q}_R\Big\rangle
+\ve^7\langle\Delta(\vv_R\cdot\nabla Q_R),\Delta\widetilde{\FF}_R\rangle-\WW_3\nonumber\\
&\quad
+C(\ve\Ef^{\frac32}+\ve^2\Ef^2+\ve\Ef\Ff^{\frac12}+\ve^2\Ef^{\frac32}\Ff^{\frac12})
+C\ve\Ef\|\ve^2\Delta\FF_R\|_{L^2}.
\end{align}

We next deal with the term $\ve^7\langle\Delta(\vv_R\cdot\nabla Q_R),\Delta\widetilde{\FF}_R\rangle$.
It is easy to see that
\begin{align*}
\ve^7\langle\Delta(\vv_R\cdot\nabla Q_R),\Delta\widetilde{\FF}_1\rangle
&=
-\SSS_3
-\ve^7J\Big\langle\Delta(\vv_R\cdot\nabla Q_R),
(\Delta\tilde{\vv}\cdot\nabla+2\partial_i\tilde{\vv}\cdot\nabla\partial_i)(\vv_R\cdot\nabla\widetilde{Q})\Big\rangle\\
&\leq-\SSS_3+C\ve^7\|\vv_R\|^2_{H^2}\|\nabla Q_R\|_{H^2}\\
&\leq-\SSS_3+C\ve\Ef^{\frac32}.
\end{align*}
By a straightforward computation, one checks that
\begin{align*}
&\ve^7\langle\Delta(\vv_R\cdot\nabla Q_R),\Delta\widetilde{\FF}_2\rangle\\
&=-\ve^{10}\frac{J}{2}\frac{\ud}{\ud t}\|\Delta(\vv_R\cdot\nabla Q_R)\|^2_{L^2}
\underbrace{-\ve^{10}J\Big\langle\Delta(\vv_R\cdot\nabla Q_R),\Delta(\vv_R\cdot\nabla\dot{Q}_R)\Big\rangle}_{\mathcal{D}_{10}}\\
&\quad
\underbrace{-\ve^{10}J\Big\langle\Delta(\vv_R\cdot\nabla Q_R),
(\Delta\tilde{\vv}\cdot\nabla+2\partial_i\tilde{\vv}\cdot\nabla\partial_i)(\vv_R\cdot\nabla Q_R)\Big\rangle}_{\mathcal{D}_{11}}\\
&\quad
\underbrace{-\ve^{13}J\Big\langle\Delta(\vv_R\cdot\nabla Q_R),\Delta(\vv_R\cdot\nabla(\vv_R\cdot\nabla Q_R))\Big\rangle}_{\mathcal{D}_{12}}.
\end{align*}
Similarly, by Lemma \ref{lem:energy} we have
\begin{align*}
\mathcal{D}_{10}=&\WW_3-\ve^{10}J\Big\langle\Delta(\vv_R\cdot\nabla Q_R),
(\Delta\vv_R\cdot\nabla+2\partial_i\vv_R\cdot\nabla\partial_i)\dot{Q}_R\Big\rangle\\
\leq&\WW_3+C\ve^{10}\|\vv_R\|_{H^2}\|\nabla Q_R\|_{H^2}\|\vv_R\|_{H^3}\|\dot{Q}_R\|_{H^2}\\
\leq&\WW_3+C(\ve^2\Ef^2+\ve^2\Ef^{\frac32}\Ff^{\frac12}),\\
\mathcal{D}_{11}\leq&C\ve^{10}\|\vv_R\|^2_{H^2}\|\nabla Q_R\|^2_{H^2}\leq C\ve^2\Ef^2,\\
\mathcal{D}_{12}=&-\ve^{13}J\Big\langle\Delta(\vv_R\cdot\nabla Q_R),
(\Delta\vv_R\cdot\nabla+2\partial_i\vv_R\cdot\nabla\partial_i)(\vv_R\cdot\nabla Q_R)\Big\rangle\\
\leq&\ve^{13}\|\vv_R\|^2_{H^2}\|\nabla Q_R\|^2_{H^2}\|\vv_R\|_{H^3}
\leq C(\ve^4\Ef^{\frac52}+\ve^3\Ef^2\Ff^{\frac12}).
\end{align*}
Thus we get
\begin{align}\label{ve7-vvQR-FFR}
\ve^7\langle\Delta(\vv_R\cdot\nabla Q_R),\Delta\widetilde{\FF}_R\rangle
&\leq
-\ve^{10}\frac{J}{2}\frac{\ud}{\ud t}\|\Delta(\vv_R\cdot\nabla Q_R)\|^2_{L^2}
-\SSS_3+\WW_3\nonumber\\
&\quad+C(\ve\Ef^{\frac32}+\ve^2\Ef^2+\ve^4\Ef^{\frac52}+\ve^2\Ef^{\frac32}\Ff^{\frac12}+\ve^3\Ef^2\Ff^{\frac12}).
\end{align}
In conclusion, putting together these estimates (\ref{DeFF1-DdQR})-(\ref{ve7-vvQR-FFR}) and discarding the cancelation terms, we obtain the following estimate
\begin{align*}
\ve^4\langle\Delta\widetilde{\FF}_R,\Delta\dot{Q}_R\rangle
&\leq
-\ve^4\frac{J}{2}\frac{\ud}{\ud t}\|\Delta(\vv_R\cdot\nabla Q^{\ve})\|^2_{L^2}
-\ve^4J\frac{\ud}{\ud t}\Big\langle\Delta(\vv_R\cdot\nabla Q^{\ve}),\Delta\dot{Q}_R\Big\rangle\nonumber\\
&\quad
+C(1+\Ef+\ve^2\Ef^2+\ve^8\Ef^5)+(\delta+C\ve^2\Ef)\Ff.
\end{align*}

\subsection{$H^2$-estimate in Proposition \ref{prop:energy}}
We first act the derivative operator $\Delta$ on the equation (\ref{remainder-Q-R}), then multiply $\Delta\dot{Q}_R$ and integrate the resulting identity on $\BR$ with respect to $\xx$. Again applying the operator $\Delta$ on the equation (\ref{remainder-v-R}) and taking $L^2$-inner product with $\Delta\vv_R$ enable us to derive the following equality:
\begin{align*}
&\ve^4\Big\langle\partial_t(\Delta\vv_R),\Delta\vv_R\Big\rangle
+\ve^4J\Big\langle\partial_t(\Delta\dot{Q}_R),\Delta\dot{Q}_R\Big\rangle\\
&=
\underbrace{-\ve^4\bigg\langle\Delta\Big(\beta_1Q_0(Q_0:\DD_R)+\beta_4\DD_R
+\beta_5\DD_R\cdot Q_0
+\beta_6Q_0\cdot\DD_R\Big),\nabla\Delta\vv_R\bigg\rangle}_{\mathcal{K}_{1}}\\
&\quad
\underbrace{-\ve^4\beta_7\Big\langle\Delta(\DD_R\cdot Q^2_0+Q^2_0\cdot\DD_R),\nabla\Delta\vv_R\Big\rangle}_{\mathcal{K}_{2}}\\
&\quad
\underbrace{-\ve^4\frac{\mu_2}{2}\Big\langle\Delta(\dot{Q}_R-[\BOm_R,Q_0]),\nabla\Delta\vv_R\Big\rangle}_{\mathcal{K}_3}
\underbrace{-\ve^4\mu_1\Big\langle\Delta\big[Q_0,(\dot{Q}_R-[\BOm_R,Q_0])\big],\nabla\Delta\vv_R\Big\rangle}_{\mathcal{K}_4}\\
&\quad
\underbrace{-\ve^4\langle\Delta\tilde{\vv}\cdot\nabla\vv_R+2\partial_i\tilde{\vv}\cdot\nabla\partial_i\vv_R, \Delta\vv_R\rangle
-\ve^4\langle\Delta\GG_R,\nabla\Delta\vv_R\rangle+\ve^4\langle\Delta\GG'_R,\Delta\vv_R\rangle}_{\mathcal{K}_5}\\
&\quad
\underbrace{-\ve^4\frac{\mu_2}{2}\langle\Delta\DD_R,\Delta\dot{Q}_R\rangle}_{\mathcal{K}_6}
\underbrace{-\ve^4\mu_1\langle\Delta\dot{Q}_R-\Delta[\BOm_R,Q_0],\Delta\dot{Q}_R\rangle^2_{L^2}}_{\mathcal{K}_7}
\underbrace{-\ve^4\Big\langle\frac{1}{\ve}\Delta\MH^{\ve}_{\nn}(Q_R),\Delta\dot{Q}_R\Big\rangle}_{\mathcal{K}_8}\\
&\quad
\underbrace{-\ve^4\Big\langle\Delta\tilde{\vv}\cdot\nabla\dot{Q}_R+2\partial_i\tilde{\vv}\cdot\nabla\partial_i\dot{Q}_R,\Delta\dot{Q}_R\Big\rangle}_{\mathcal{K}_9}
+\ve^4\langle\Delta\FF_R+\Delta\widetilde{\FF}_R,\Delta\dot{Q}_R\rangle.
\end{align*}
The terms on the right-hand sides can be estimated as follows. By the analysis for the construction of the terms $\mathcal{K}_1$ and $\mathcal{K}_2$, we have
\begin{align*}
\mathcal{K}_{1}+\mathcal{K}_2
\leq&-\ve^4\Big\langle\beta_1Q_0(Q_0:\Delta\DD_R)+\beta_4\Delta\DD_R
+\beta_5\Delta\DD_R\cdot Q_0+\beta_6Q_0\cdot\Delta\DD_R,\nabla\Delta\vv_R\Big\rangle\\
&
-\ve^4\Big\langle\beta_7(\Delta\DD_R\cdot Q^2_0+Q^2_0\cdot\Delta\DD_R),\nabla\Delta\vv_R\Big\rangle
+C\|\ve^2\nabla\vv_R\|_{H^1}\|\ve^2\nabla\Delta\vv_R\|_{L^2}\\
\leq&-\ve^4\Big\langle\beta_1Q_0(Q_0:\Delta\DD_R)+\beta_4\Delta\DD_R
+\frac{\beta_5+\beta_6}{2}(Q_0\cdot\Delta\DD_R+\Delta\DD_R\cdot Q_0),\Delta\DD_R\Big\rangle\\
&
-\ve^4\Big\langle\beta_7(\Delta\DD_R\cdot Q^2_0+Q^2_0\cdot\Delta\DD_R),\Delta\DD_R\Big\rangle
+\underbrace{\ve^4\frac{\mu_2}{2}\langle[\Delta\DD_R,Q_0],\nabla\Delta\vv_R\rangle}_{\mathcal{K}'_{1}}
+C\Ef^{\frac12}\Ff^{\frac12}.
\end{align*}
It can be easy to observe that
\begin{align*}
\mathcal{K}'_{1}+\mathcal{K}_3+\mathcal{K}_6
&\leq
\ve^4\frac{\mu_2}{2}\langle[\Delta\DD_R,Q_0],\Delta\BOm_R\rangle
-\ve^4\mu_2\langle\Delta\DD_R,\Delta\dot{Q}_R\rangle\\
&\quad+\ve^4\frac{\mu_2}{2}\langle[\Delta\BOm_R,Q_0],\Delta\DD_R\rangle
+C\|\ve^2\nabla\vv_R\|_{H^1}\|\ve^2\nabla\Delta\vv_R\|_{L^2}\\
&\leq -\ve^4\mu_2\langle\Delta\dot{Q}_R-[\Delta\BOm_R,Q_0],\Delta\DD_R\rangle
+C\Ef^{\frac12}\Ff^{\frac12}.
\end{align*}
The terms $\mathcal{K}_4$ and $\mathcal{K}_7$ can be estimated as
\begin{align*}
\mathcal{K}_4+\mathcal{K}_7\leq&
-\ve^4\mu_1\Big\langle\big[Q_0,(\Delta\dot{Q}_R-[\Delta\BOm_R,Q_0])\big],\nabla\Delta\vv_R\Big\rangle\\
&-\ve^4\mu_1\langle\Delta\dot{Q}_R-[\Delta\BOm_R,Q_0],\Delta\dot{Q}_R\rangle\\
&+C\Big(\ve\|\ve\dot{Q}_R\|_{H^1}+\|\ve^2\nabla\vv_R\|_{H^1}\Big)\|\ve^2\nabla\Delta\vv_R\|_{L^2}\\
&+C\|\ve^2\nabla\vv_R\|_{H^1}\|\ve^2\Delta\dot{Q}_R\|_{L^2}\\
\leq&
-\ve^4\mu_1\big\|\Delta\dot{Q}_R-[\Delta\BOm_R,Q_0]\big\|^2_{L^2}
+C(\Ef^{\frac12}\Ff^{\frac12}+\Ef).
\end{align*}
Combining with the following equality
\begin{align*}
&-\ve^4\mu_1\big\|\Delta\dot{Q}_R-[\Delta\BOm_R,Q_0]\big\|^2_{L^2}
-\ve^4\mu_2\langle\Delta\dot{Q}_R-[\Delta\BOm_R,Q_0],\Delta\DD_R\rangle\\
&\quad=-\ve^4\mu_1\big\|\Delta\dot{Q}_R-[\Delta\BOm_R,Q_0]+\frac{\mu_2}{2\mu_1}\Delta\DD_R\big\|^2_{L^2}
+\frac{\mu^2_2}{4\mu_1}\|\Delta\DD_R\|^2_{L^2},
\end{align*}
and by using the dissipation relation (\ref{diss:ineq}), then we have the following estimate
\begin{align*}
&\mathcal{K}_1+\mathcal{K}_2+\mathcal{K}_{3}+\mathcal{K}_4+\mathcal{K}_6+\mathcal{K}_7\\
&\leq
-\ve^4\beta_1s^2\|\nn\nn:\Delta\DD_R\|^2_{L^2}
-\ve^4\Big(\beta_4-\frac{s(\beta_5+\beta_6)}{3}+\frac29\beta_7s^2\Big)\|\Delta\DD_R\|^2_{L^2}\\
&\quad
-\ve^4\Big(s(\beta_5+\beta_6)+\frac23\beta_7s^2\Big)\|\nn\cdot\Delta\DD_R\|^2_{L^2}
-\ve^4\mu_2\langle\Delta\dot{Q}_R-[\Delta\BOm_R,Q_0],\Delta\DD_R\rangle\\
&\quad-\ve^4\mu_1\big\|\Delta\dot{Q}_R-[\Delta\BOm_R,Q_0]\big\|^2_{L^2}
+C(\Ef^{\frac12}\Ff^{\frac12}+\Ef)\\
&\leq
-\ve^4\tilde{\beta}_1\|\nn\nn:\Delta\DD_R\|^2_{L^2}-\ve^4\tilde{\beta}_2\|\Delta\DD_R\|^2_{L^2}
-\ve^4\tilde{\beta}_3\|\nn\cdot\Delta\DD_R\|^2_{L^2}\\
&\quad
-4\ve^4\delta\|\Delta\DD_R\|^2_{L^2}
+C(\Ef^{\frac12}\Ff^{\frac12}+\Ef)\\
&\leq
-4\ve^4\delta\|\nabla\Delta\vv_R\|^2_{L^2}+C\Ef+\delta\Ff,
\end{align*}
where $\delta>0$ is small enough, such that the coefficients $\tilde{\beta}_i(i=1,2,3)$ given by (\ref{tilde-beta}) satisfy the relation (\ref{diss:coeff}).
As for the estimates of the terms $\mathcal{K}_{5}$ and $\mathcal{K}_{9}$, it is easy to obtain
\begin{align*}
\mathcal{K}_{5}+\mathcal{K}_{9}
\leq&C\Big(\|\ve^2\nabla\vv_R\|_{H^1}\|\ve^2\Delta\vv_R\|_{L^2}
+\|\ve^2\Delta\GG_R\|_{L^2}\|\ve^2\nabla\Delta\vv_R\|_{L^2}\\
&+\|\ve^2\Delta\GG'_R\|_{L^2}\|\ve^2\Delta\vv_R\|_{L^2}
+\|\ve^2\nabla\dot{Q}_R\|_{H^1}\|\ve^2\Delta\dot{Q}_R\|_{L^2}\Big)\\
\leq&C\Ef+C(\|\ve^2\Delta\GG_R\|_{L^2}\Ff^{\frac12}+\|\ve^2\Delta\GG'_R\|_{L^2}\Ef^{\frac12}).
\end{align*}
Similar to the derivation of (\ref{mathcalJ-8}), the term $\mathcal{K}_{8}$ can be handled as
\begin{align*}
\mathcal{K}_8\leq-\frac{\ve^3}{2}\frac{\ud}{\ud t}\langle\MH^{\ve}_{\nn}(\Delta Q_R),\Delta Q_R\rangle
+\delta\Ff+C\Ef.
\end{align*}

As a consequence, from the above estimates, we can conclude that
\begin{align*}
&\ve^4\Big\langle\partial_t(\Delta\vv_R),\Delta\vv_R\Big\rangle
+\ve^4J\Big\langle\partial_t(\Delta\dot{Q}_R),\Delta\dot{Q}_R\Big\rangle\nonumber\\
&\quad+\frac{\ve^3}{2}\frac{\ud}{\ud t}\langle\MH^{\ve}_{\nn}(\Delta Q_R),\Delta Q_R\rangle
+4\ve^4\delta\|\nabla\Delta\vv_R\|^2_{L^2}\nonumber\\
&\leq
C\Big(\|\ve^2\Delta\GG_R\|_{L^2}\Ff^{\frac12}+\|\ve^2\Delta\GG'_R\|_{L^2}\Ef^{\frac12}
+\|\ve^2\Delta\FF_R\|_{L^2}\Ef^{\frac12}\Big)\nonumber\\
&\quad
+\ve^4\langle\Delta\widetilde{\FF}_R,\Delta\dot{Q}_R\rangle+C\Ef+\delta\Ff.
\end{align*}

\bigskip

\noindent{\bf Acknowledgments.}
Sirui Li is supported by the NSF of China under Grant No. 11601099 and by the Science and Technology Foundation of Guizhou Province of China under Grant No. [2017]1032.
 Wei Wang is supported by NSF of China under Grant No. 11922118 and 11771388, and the Young Elite Scientists Sponsorship Program by CAST.

\end{document}